\theoremstyle{definition}
\newtheorem{Def}{Definition}[section]
\newtheorem{Ex}[Def]{Example}
\newtheorem{Rmk}[Def]{Remark}
\theoremstyle{plain}
\newtheorem{Prop}[Def]{Proposition}
\newtheorem{Thm}[Def]{Theorem}
\newtheorem{Lemma}[Def]{Lemma}
\newtheorem{Cor}[Def]{Corollary}
\newcommand{\R}{\mathbb{R}}
\newcommand{\C}{\mathbb{C}}
\newcommand{\Z}{\mathbb{Z}}
\newcommand{\N}{\mathbb{N}}
\newcommand{\B}{\mathcal{B}}
\newcommand{\Bp}{\mathcal{B}^*}
\newcommand{\lb}{\underline{b}}
\newcommand{\ub}{\overline{b}}
\newcommand{\subsetc}{\hookrightarrow}
\newcommand{\J}{\mathcal{J}}
\newcommand{\Sc}{\mathcal{S}}
\newcommand{\F}{\mathcal{F}}
\newcommand{\snA}{\mathscr{B}_n}
\newcommand{\ban}{\mathscr{B}}
\newcommand{\nA}{\mathbf{A}}
\newcommand{\nB}{\mathbf{B}}
\newcommand{\clas}{\mathscr{C}}
\newcommand{\hypu}{(\mathcal{H}_1)}
\newcommand{\hypd}{(\mathcal{H}_2)}
\newcommand{\hypt}{(\mathcal{H}_3)}
\renewcommand{\epsilon}{\varepsilon}
\title{A Functorial Approach to Multi-Space Interpolation with Function Parameters}
\author{T. Lamby\footnote{Universit\'e de Li\`ege, D\'epartement de math\'ematique -- zone Polytech 1, 12 all\'ee de la D\'ecouverte, B\^at. B37, B-4000 Li\`ege. thomas.lamby@uliege.be} and Samuel Nicolay\footnote{Universit\'e de Li\`ege, D\'epartement de math\'ematique -- zone Polytech 1, 12 all\'ee de la D\'ecouverte, B\^at. B37, B-4000 Li\`ege. Orcid ID: 0000-0003-0549-0566. S.Nicolay@uliege.be}}
\begin{document}

\maketitle

\begin{abstract}
We introduce an extension of interpolation theory to more than two spaces by employing a functional parameter, while retaining a fully functorial and systematic framework. This approach allows for the construction of generalized intermediate spaces and ensures stability under natural operations such as powers and convex combinations. As a significant application, we demonstrate that the interpolation of multiple generalized Sobolev spaces yields a generalized Besov space. Our framework provides explicit tools for handling multi-parameter interpolation, highlighting both its theoretical robustness and practical relevance.
\end{abstract}
\noindent \textit{Keywords}: Real interpolation; multi-space interpolation; Boyd functions; Sobolev spaces; Besov spaces; Lorentz spaces

\noindent  \textit{2020 MSC}: 46B70; 46M35; 46E35; 46E99

\section{Introduction}
The theory of interpolation of vector spaces originated from an observation made by Marcinkiewicz \cite{Marcinkiewicz:39}, which led to the formulation of the Riesz-Thorin theorem \cite{Riesz:27,Thorin:48}. Roughly speaking, if a linear operator is continuous on both $L^p$ and $L^q$ spaces, then it is also continuous on $L^r$ for any intermediate value $r$ between $p$ and $q$. Real interpolation techniques \cite{Bergh:76,Bru:91} provide a formal framework for this idea, often via the $K$-method. Consider two Banach spaces $A_0$ and $A_1$, continuously embedded in a Hausdorff topological vector space, so that $A_0\cap A_1$ and $A_0+A_1$ are well-defined Banach spaces. The $K$-functional is defined by
\[
 K(t,a)= \inf_{a=a_0+a_1} \{\|a_0\|_{A_0} + t \|a_1\|_{A_1}\}, 
\]
for $t>0$ and $a\in A_0+A_1$. For $0<\alpha<1$ and $q\in [1,\infty]$, an element $a$ belongs to the interpolation space $[A_0,A_1]_{\alpha,q}$ if $a\in A_0+A_1$ and
\begin{equation}\label{eq:K method orig}
 (2^{-j\alpha}K(2^j,a))_{j\in \Z} \in \ell^q.
\end{equation}

Real interpolation methods have been extended by introducing a function parameter \cite{Peetre:68,Gustavsson:78,Brudnyi:81,Cwikel:81,Janson:81,Ovchinnikov:84,Merucci:84,Maligranda:86,Persson:86,Lamby:24}, replacing the sequence $(2^{-j\alpha})_{j\in \Z}$ in \eqref{eq:K method orig} with a Boyd function. Related formulations appear in \cite{Kreit:13,Loosveldt:18}, and connections among these approaches have been investigated in \cite{Lamby:22}.

An additional extension arises when interpolating more than two spaces. Early work on this topic can be found in \cite{Foi:61}, with further developments in \cite{Ker:66,Yos:70,Spa:74,Ase:01,Triebel:78}. In this paper, we consider an extension of the classical interpolation framework to multiple spaces with a function parameter. While a general equivalence between the $K$- and $J$-methods has been proposed in \cite{Ase:97}, we adopt a functorial perspective that allows explicit constructions in the multi-space context. A key aspect of this approach is that Boyd functions permit the inclusion of limiting cases, generalizing the classical setting when $n=1$.

We illustrate the theory by computing the generalized interpolation of two Lorentz spaces and by showing that the interpolation of multiple generalized Sobolev spaces produces a generalized Besov space. Moreover, we discuss how this framework can be applied to reiteration formulas, in the spirit of \cite{Ase:01}, and how it affects the preservation of smooth function spaces under multi-space interpolation \cite{Peetre:68, Persson:86, Ase:01}.

The structure of the paper is as follows. In Section~\ref{sec:Boyd}, we introduce Boyd functions and the basic notions of interpolation functors. Section~\ref{sec:interpol} presents the interpolation of multiple spaces with function parameters. Section~\ref{sec:equiv} examines the equivalence between the $K$- and $J$-methods in this context, and Section~\ref{sec:powth} establishes the power theorem and discusses stability properties of the associated spaces. Finally, Sections~\ref{sec:sobolev} and \ref{sec:lorentz} illustrate applications to generalized Sobolev and Lorentz spaces and to multi-space reiteration results.

\section{Boyd functions}\label{sec:Boyd}
We hereby review fundamental characteristics of Boyd functions. For a more in-depth exploration, the reader is referred to \cite{Lamby:22,Merucci:84} and the references therein. We denote by $L_*^q(E)$ the space of measurable functions $f$ defined on $E$, satisfying the condition that $(\int_E |f|^q  dt/t)^{1/q}$ is finite; with the conventional adjustment made for the case when $q=\infty$.

\begin{Def}\label{def:Boyd}
A function $\phi:(0,\infty)\to (0,\infty)$ is classified as a Boyd function if it satisfies the following conditions:
\begin{itemize}
\item $\phi(1)=1$,
\item $\phi$ is continuous,
\item for any $t>0$,
\[
 \bar\phi(t) = \sup_{s>0} \frac{\phi(ts)}{\phi(s)}<\infty.
\]
\end{itemize}
The set of Boyd functions is denoted by $\B$.
\end{Def}
The function $\bar \phi$ is sub-multiplicative, Lebesgue-measurable and it holds that $1/\bar\phi(1/t)\le \phi(t)\le \bar\phi(t)$ and $1/\phi(t)\le 1/\bar\phi(1/t)$.
\begin{Def}\label{def:indices}
For a given $\phi\in \B$, the lower and upper Boyd indices \cite{Matu:60,Boyd:69} are defined as follows:
\[
 \lb(\phi) = \sup_{t\in (0,1)} \frac{\log \bar\phi(t)}{\log t}
 = \lim_{t\to 0^+} \frac{\log \bar\phi(t)}{\log t}
\]
and
\[
 \ub(\phi) = \inf_{t\in (1,\infty)} \frac{\log \bar\phi(t)}{\log t}
 = \lim_{t\to \infty} \frac{\log \bar\phi(t)}{\log t},
\]
respectively.
\end{Def}
For instance, considering $\gamma\ge 0$ and $\theta\in \R$, if we set
\[
\phi(t)=t^\theta (1+|\log t|)^\gamma,
\]
for $t>0$, it can be readily verified that $\lb(\phi)=\ub(\phi)=\theta$. If $\phi\in\B$, for $\epsilon>0$ and $R>0$, there exists a constant $C>0$ such that
\[
 C^{-1} r^{\ub(\phi)+\epsilon} \le \phi(r)\le C r^{\lb(\phi)-\epsilon},
\] 
for any $r\in(0,R)$ and a constant $C>0$ such that
\[
 C^{-1} r^{\lb(\phi)-\epsilon} \le \phi(r)\le C r^{\ub(\phi)+\epsilon},
\] 
for any $r\ge R$. Furthermore, for such a function $\phi$, the following properties hold:
\begin{itemize}
\item $\lb(\phi)>0$ $\Leftrightarrow$ $\bar\phi\in L_*^1(0,1)$ $\Leftrightarrow$ $\displaystyle \lim_{t\to 0^+}\bar\phi(t)=0$,
\item $\ub(\phi)<0$ $\Leftrightarrow$ $\bar\phi\in L_*^1(1,\infty)$ $\Leftrightarrow$ $\displaystyle \lim _{t\to \infty}\bar\phi(t)=0$,
\item $\lb(\phi)>0$ $\Rightarrow$ $\phi\in L^\infty(0,1)$,
\item $\ub(\phi)<0$ $\Rightarrow$ $\phi\in L^\infty(1,\infty)$.
\end{itemize}

\begin{Def}
A function $\phi:(0,\infty)\to (0,\infty)$ is considered Boyd-regular if it satisfies the following conditions:
\begin{itemize}
\item $\phi(1)=1$,
\item $\phi\in C^1(0,\infty)$,
\item the inequalities
\[
 0< \inf_{t>0} t \frac{|D\phi(t)|}{\phi(t)}
 \le \sup_{t>0} t \frac{|D\phi(t)|}{\phi(t)} <\infty
\]
hold.
\end{itemize}
The set of Boyd-regular functions is denoted by $\Bp$.
\end{Def}
It is known that $\Bp\subset \B$. The subset of functions $\phi\in \Bp$ that are strictly increasing (resp.\ strictly decreasing) is denoted by $\Bp_+$ (resp.\ $\Bp_-$).

Given two functions $f$ and $g$ defined on $(0,\infty)$, the notation $f\asymp g$ is used when there exists a constant $C>0$ such that $C^{-1} g(t)\le f(t)\le C g(t)$ for any $t>0$. If $\phi\in \B$ is such that $\lb(\phi)>0$ (resp.\ $\ub(\phi)<0$), then there exists $\xi\in \Bp_+$ (resp.\ $\xi\in \Bp_-$) such that $\xi^{-1}\in \Bp_+$ (resp.\ $\xi^{-1}\in \Bp_-$) and $\phi\asymp \xi$ (see \cite{Merucci:84}).

\section{Interpolation}\label{sec:interpol}
In the theory of real interpolation, the general correspondence from two to several Banach spaces is not always feasible. However, in a multitude of specific instances, comparable outcomes can be ascertained.
\begin{Def}
We denote $\nA$ (resp.\ $\nB$) as a ($n+1$)-tuples of Banach spaces $A_0,\dotsc,A_n$ (resp.\ $B_0,\dotsc,B_n$) such that each $A_j$(resp.\ each $B_j$) can be linearly and continuously imbedded in a Hausdorff topological vector space $\mathcal{A}$ (resp.\ $\mathcal{B}$): $\nA=(A_0,\dotsc,A_n)$. Given two such sets $\nA$ and $\nB$, morphisms $T:\nA\rightarrow \nB$ are homomorphisms $T:\mathcal{A}\rightarrow \mathcal{B}$ such that $T:A_j\rightarrow B_j$ are continuous homomorphisms.
\end{Def}
\begin{Def}
The standard norm on $\Sigma(\nA)=A_0+\dotsb+A_n$ is defined by
\[
 a\mapsto \|a\|_{\Sigma(\nA)}= \inf \{\|a_0\|_{A_0}+\dotsb+\|a_n\|_{A_n}\},
\]
where the infimum is taken over all decompositions $a=a_0+\dotsb+a_n$, with $a_j\in A_j$.
\end{Def}
\begin{Def}
The standard norm on $\Delta(\nA)=A_0\cap\dotsb\cap A_n$ is defined by
\[
 a\mapsto \|a\|_{\Delta(\nA)}= \max\{ \|a\|_{A_0},\dotsc,\|a\|_{A_n}\}.
\]
\end{Def}
Let $\snA$ be the collection of the elements $\nA$; if $\ban$ denotes the category of Banach spaces, it is evident that $\Sigma$ and $\Delta$ are functors from $\snA$ to $\ban$.

\begin{Def}
An interpolation functor of order $n$ is a functor $F:\snA\rightarrow \ban$ such that
\begin{itemize}
\item[(i)]$\Delta(\nA)\subsetc F(\nA)\subsetc \Sigma(\nA)$ for any $\nA$ in $\snA$,
\item[(ii)]$F(T)=T\vert_{F(\nA)}$ for any morphism $T:\nA\rightarrow \nB$.
\end{itemize}
\end{Def}

\begin{Def}
Let $f$ be a function from $(0,\infty)^{n+1}$ to $(0,\infty)$; 
an interpolation functor $F$ is of type $f$ if there exists a constant $C\ge 1$ such that
\[
 \|T\|_{F(\nA),F(\nB)}\le C f\left(\|T\|_{A_0,B_0},\dotsc,\|T\|_{A_n,B_n}\right),
\]
for any morphism $T:\nA\to \nB$. 
If, moreover,
\begin{itemize}
\item $f(t_0,\dotsc,t_n)=\max\{t_0,\dotsc,t_n\}$, we say that $F$ is bounded,
\item $C=1$, we say that $F$ is exact.
\end{itemize}
\end{Def}
\begin{Def}
Given $\phi_1,\dotsc,\phi_n\in \B$, $p\in[1,\infty]$, we will say that condition $\hypu$ is satisfied when 
\[
 \Big(\int_{(0,\infty)^n}\big(\frac{1}{\phi_1(t_1)\dotsb\phi_n(t_n)}\min\{1,t_1,\dotsc,t_n\}\big)^p \, \frac{dt_1}{t_1}\dotsb \frac{dt_n}{t_n}\Big)^{1/p}<\infty,
\]
with the usual modification in the case $p=\infty$.
\end{Def}
\begin{Def}
Given $\phi_1,\dotsc,\phi_n\in \B$, we say that condition $\hypd$ is satisfied if 
\[
 \min\{\lb(\phi_1),\dotsc,\lb(\phi_n)\}>0
 \quad\text{ and }\quad
 \max\{\ub(\phi_1),\dotsc , \ub(\phi_n)\}<1
 \]
 and that condition $\overline\hypd$ is satisfied if
 \[
  \min\{\lb(\phi_1),\dotsc,\lb(\phi_n)\}\ge 0
  \quad\text{ and }\quad
  \max\{\ub(\phi_1),\dotsc , \ub(\phi_n)\}\le 1.
\]
\end{Def}
\begin{Rmk}
Using the properties of Boyd functions, one can readily show that $\hypd$ implies $\hypu$ and if $p=\infty$, $\overline\hypd$ implies $\hypu$.
\end{Rmk}

Let $\sim$ be the equivalence relation defined by $(t_0,\dotsc,t_n)\sim(s_0, \dotsc,s_n)$ if and only if there exists $\lambda>0$ such that $t_j=\lambda s_j$ for all $j\in\{0,\dotsc,n\}$.

\begin{Def}
Let  $p\in [1,\infty]$, $\phi_1,\dotsc,\phi_n\in \B$ satisfying $\overline \hypd$ 
and $f$ be a function from $(0,\infty)^{n+1}$ to $(0,\infty)$; one defines
\[
 \Phi_p^{\phi_1,\dotsc,\phi_n}(f)
 =\Big(\int_{\mathbb{P}^{n}_+}\big(\frac{\phi_1(t_0)\dotsb\phi_n(t_0)}{t_0\phi_1(t_1)\dotsb\phi_n(t_n)}f(t_0,t_1,\dotsc,t_n)\big)^p \, d\omega(t)\Big)^{1/p},
\]
with the usual modification in the case $p=\infty$, where $\omega$ is the Haar measure on $\mathbb{P}^{n}_+=(0,\infty)^{n+1}/\sim$.
\end{Def}
\begin{Rmk}\label{rem:BanL}
For the Haar measure $\omega$ in the previous definition, we have
\[
 d\omega(t)=\sum_{j=0}^n(-1)^j \, \frac{dt_0}{t_0}\wedge\dotsb \wedge\check{\frac{dt_j}{t_j}}\wedge\dotsb\wedge\frac{dt_n}{t_n},
\]
where the symbol $\,\check{}\,$ indicates a missing factor. Consequently,
\begin{equation}\label{eq:Phip}
 \Phi_p^{\phi_1,\dotsc,\phi_n}(f)=
 \Big(\int_{(0,\infty)^n}\big(\frac{f(1,t_1,\dotsc,t_n)}{\phi_1(t_1)\dotsb \phi_n(t_n)}\big)^p \, \frac{dt_1}{t_1}\dotsb \frac{dt_n}{t_n}\Big)^{1/p},
\end{equation}
with the usual modification in the case $p=\infty$.
\end{Rmk}

\begin{Def}
Given $a\in\nA$ and $t\in (0,\infty)^{n+1}$, let
 \[
 K(t,a)=\inf_a \{t_0\|a_0\|_{A_0}+t_1\|a_1\|_{A_1}+\dotsb +t_n\|a_n\|_{A_n}\},
\]
where the infimum is taken over all the decompositions $a=a_0+\dotsb+a_n$, $a_j\in A_j$. In the same way, for $a\in\Delta(\nA)$, we set
\[
 J(t,a)=\max\{ t_0\|a\|_{A_0},t_1\|a\|_{A_1},\dotsc,t_n\|a\|_{A_n}\}.
\]
\end{Def}
It follows that
\[
 K(t,a)\le \max\{\frac{t_0}{s_0},\dotsc,\frac{t_n}{s_n}\}K(s,a),
\]
\[
 J(t,a)\le \max\{\frac{t_0}{s_0},\dotsc,\frac{t_n}{s_n}\}J(s,a)
\]
and
\[
 K(t,a)\le \max\{\frac{t_0}{s_0},\dotsc,\frac{t_n}{s_n}\}J(s,a).
\]
 
\begin{Def}
Let $p\in [1,\infty]$ and $\phi_1,\dotsc,\phi_n\in \B$ satisfying $\overline\hypd$%
; we define $K_p^{\phi_1,\dotsc,\phi_n}(\nA)$ as the set of all $a\in \Sigma(\nA)$ for which
\[
 \|a\|_{K_p^{\phi_1,\dotsc,\phi_n}(\nA)}=\Phi_p^{\phi_1,\dotsc,\phi_n}\big(K(t,a)\big)<\infty.
\]
\end{Def}
\begin{Prop}\label{Prop Kfct}
 Let $p\in [1,\infty]$ and $\phi_1,\dotsc,\phi_n\in \B$ satisfying $\hypu$; the functor $K_p^{\phi_1,\dotsc,\phi_n}$ is an exact interpolation functor of type $f$, where
 \[
  f(t_0,\dotsc,t_n)=t_0\overline{\phi_1}(t_1/t_0)\dotsb \overline{\phi_n}(t_n/t_0).
\]

Moreover, 
\[
 K(1,t_1,\dotsc,t_n,a)\le C \phi_1(t_1)\dotsb \phi_n(t_n)\|a\|_{K_p^{\phi_1,\dotsc,\phi_n}(\nA)}.
\]
\end{Prop}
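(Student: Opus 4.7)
The plan is to establish four properties in turn: the exact type $f$ estimate, the pointwise bound stated in the ``moreover'' part, the two continuous embeddings $\Delta(\nA)\subsetc K_p^{\phi_1,\dotsc,\phi_n}(\nA)\subsetc\Sigma(\nA)$, and completeness. It is efficient to prove the type $f$ estimate and the pointwise inequality first, since the embeddings (and the fact that $\|\cdot\|_{K_p}$ is actually a norm rather than merely a seminorm) follow from them at essentially no cost.

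For the type $f$ estimate, I would fix a morphism $T:\nA\to\nB$ and set $M_j=\|T\|_{A_j,B_j}$. For any admissible decomposition $a=a_0+\dotsb+a_n$, the image $Ta=Ta_0+\dotsb+Ta_n$ decomposes in $\nB$ with $\|Ta_j\|_{B_j}\le M_j\|a_j\|_{A_j}$, hence $K(t,Ta;\nB)\le K(M_0t_0,\dotsc,M_nt_n,a;\nA)$. Using the representation \eqref{eq:Phip}, the homogeneity of $K$, and the measure-preserving change of variables $u_j=t_jM_j/M_0$, the inequality $1/\phi_j(u_jM_0/M_j)\le\bar\phi_j(M_j/M_0)/\phi_j(u_j)$ (a direct consequence of the definition of $\bar\phi_j$) yields
\[
\|Ta\|_{K_p^{\phi_1,\dotsc,\phi_n}(\nB)}\le M_0\prod_{j=1}^n\bar\phi_j(M_j/M_0)\,\|a\|_{K_p^{\phi_1,\dotsc,\phi_n}(\nA)},
\]
which is the desired exact type $f$ bound.

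For the pointwise inequality I would exploit the monotonicity $K(t,a)\le\max_j(t_j/s_j)K(s,a)$. Restricting $s=(1,s_1,\dotsc,s_n)$ to the box $\prod_{j=1}^n[t_j,2t_j]$ makes every ratio $t_j/s_j\le 1$, so that $K(1,t_1,\dotsc,t_n,a)\le K(1,s_1,\dotsc,s_n,a)$ throughout the box. Combining this with $\phi_j(s_j)\le\bar\phi_j(2)\phi_j(t_j)$, restricting the integral \eqref{eq:Phip} to the box leaves only $\int\prod ds_j/s_j=(\log 2)^n$, giving
\[
K(1,t_1,\dotsc,t_n,a)\le (\log 2)^{-n/p}\prod_{j=1}^n\bar\phi_j(2)\,\prod_{j=1}^n\phi_j(t_j)\,\|a\|_{K_p^{\phi_1,\dotsc,\phi_n}(\nA)},
\]
with the usual supremum modification for $p=\infty$.

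Once these two facts are in hand, specializing to $t_1=\dotsb=t_n=1$ in the pointwise bound and using $\phi_j(1)=1$ together with $K(1,1,\dotsc,1,a)=\|a\|_{\Sigma(\nA)}$ delivers $K_p^{\phi_1,\dotsc,\phi_n}(\nA)\subsetc\Sigma(\nA)$, which in particular shows that $\|\cdot\|_{K_p}$ separates points. For $a\in\Delta(\nA)$, concentrating the decomposition in slot $j$ gives $K(1,t_1,\dotsc,t_n,a)\le t_j\|a\|_{\Delta(\nA)}$, hence $K\le\min\{1,t_1,\dotsc,t_n\}\|a\|_{\Delta(\nA)}$; inserting this in \eqref{eq:Phip} produces precisely the integral of condition $\hypu$, which is finite by hypothesis, giving $\Delta(\nA)\subsetc K_p^{\phi_1,\dotsc,\phi_n}(\nA)$. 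Completeness then follows from the continuous inclusion into $\Sigma(\nA)$ by a standard Fatou argument on the weighted $L^p$ space. The main obstacle is the pointwise estimate, as it converts an integral quantity into a pointwise bound via the monotonicity of $K$ and requires a careful choice of the local box together with sub-multiplicativity control on $\bar\phi_j$.
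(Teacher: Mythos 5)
Your proposal is correct and, for the type-$f$ estimate and the two embeddings, follows essentially the same route as the paper: the same inequality $K(t,Ta;\nB)\le K(M_0t_0,\dotsc,M_nt_n,a;\nA)$, the same multiplicative change of variables, and the same use of $\phi_j(\lambda s)\le\bar\phi_j(\lambda)\phi_j(s)$. Where you genuinely diverge is the ``moreover'' inequality. The paper obtains it globally: it integrates the monotonicity bound $\min\{1,t_1/s_1,\dotsc,t_n/s_n\}\,K(1,s_1,\dotsc,s_n,a)\le K(1,t_1,\dotsc,t_n,a)$ over all of $(0,\infty)^n$ against the weight $\prod_j\phi_j(t_j)^{-1}$, changes variables $t_j=u_js_j$, and uses submultiplicativity to pull out $\prod_j\phi_j(s_j)^{-1}$ times the positive integral appearing in $\hypu$. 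You instead localize to the box $\prod_j[t_j,2t_j]$, which requires only the positivity of a fixed local integral and no appeal to $\hypu$ for this step; this is slightly more elementary and makes transparent that the pointwise bound is independent of the integrability hypothesis (which is then used only for $\Delta(\nA)\subsetc K_p^{\phi_1,\dotsc,\phi_n}(\nA)$, exactly as in the paper). One small imprecision: since $\bar\phi_j$ is submultiplicative but not monotone, the step $\phi_j(s_j)\le\bar\phi_j(2)\phi_j(t_j)$ for $s_j\in[t_j,2t_j]$ is not immediate; you should either write $\phi_j(s_j)\le\bar\phi_j(s_j/t_j)\phi_j(t_j)$ and keep $\bar\phi_j(s_j/t_j)$ inside the integral, using only that $\int_1^2\bar\phi_j(r)^{-p}\,dr/r>0$, or invoke the standard fact that a finite, measurable, submultiplicative function is bounded on compact subsets of $(0,\infty)$, so that $\sup_{r\in[1,2]}\bar\phi_j(r)<\infty$. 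Either fix is one line and leaves the rest of your argument unaffected.
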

\begin{proof}
Using $\hypu$, we have
 \begin{align*}
 & \big(\int_{(0,\infty)^n}(\frac{1}{\phi_1(u_1s_1)\dotsb\phi_n(u_ns_n)}\min\{1,u_1,\dotsc,u_n\})^p \, \frac{du_1}{u_1}\dotsb \frac{du_n}{u_n}\big)^{1/p}  \\
 &\qquad K(1,s_1,\dotsc,s_n,a)\\
 &\le \Big(\int_{(0,\infty)^n} \big(\frac{1}{\phi_1(t_1)\dotsb \phi_n(t_n)}\min\{1,\frac{t_1}{s_1},\dotsc,\frac{t_n}{s_n} \} \\
&\qquad K(1,s_1,\dotsc,s_n)\big)^p \, \frac{dt_1}{t_1}\dotsb \frac{dt_n}{t_n}\Big)^{1/p}\\
 &\le \|a\|_{K_p^{\phi_1,\dotsc,\phi_n}(\nA)},
\end{align*}
so that
\begin{equation}\label{equation 1 dans thm K,phi,p foncteur}
 K(1,s_1,\dotsc,s_n,a)\le C \phi_1(s_1)\dotsb\phi_n(s_n)\|a\|_{K_p^{\phi_1,\dotsc,\phi_n}(\nA)}.
\end{equation}
By using (\ref{equation 1 dans thm K,phi,p foncteur}) with $s_j=1$, we get
\[
 \|a\|_{\Sigma(\nA)}= K(1,a)\le C \|a\|_{K_p^{\phi_1,\dotsc,\phi_n}(\nA)},
\]
so that $K_p^{\phi_1,\dotsc,\phi_n}(\nA)\subsetc \Sigma(\nA)$.

Now, since 
\[
 K(1,t_1,\dotsc,t_n,a)\le \min\{1,t_1,\dotsc,t_n\}\|a\|_{\Delta(\nA)},
\]
we have
\[
 \|a\|_{K_p^{\phi_1,\dotsc,\phi_n}(\nA)}\le C \|a\|_{\Delta(\nA)},
\]
and thus, using $\hypu$, $\Delta(\nA)\subsetc K_p^{\phi_1,\dotsc,\phi_n}(\nA)$.

Now, let $\nA$ and $\nB$ be two couples in $\snA$ and let $T:\nA \to \nB$; we can write
\begin{align*}
K^{\nB}(1,t_1,\dotsc,t_n,Ta)\le \|T\|_{A_0,B_0} K^{\nA}(1,t_1\frac{\|T\|_{A_1,B_1}}{\|T\|_{A_0,B_0}},\dotsc,t_n\frac{\|T\|_{A_n,B_n}}{\|T\|_{A_0,B_0}},a),
\end{align*}
so that, for $\alpha_j=\frac{\|T\|_{A_j,B_j}}{\|T\|_{A_0,B_0}}$, we have
\begin{align*}
&\|Ta\|_{K_p^{\phi_1,\dotsc,\phi_n}(\nB)} \\
&\le \|T\|_{A_0,B_0}\Big(\int_{(0,\infty)^n} \big(\frac{1}{\phi_1(u_1/\alpha_1)\dotsb \phi_n(u_n/\alpha_n)}K^{\nA}(1,u_1,\dotsc,u_n,a)\big)^p \\
&\qquad \frac{du_1}{u_1}\dotsb\frac{du_n}{u_n}\Big)^{1/p}\\
&\le \|T\|_{A_0,B_0}\overline{\phi_1}(\alpha_1)\dotsb \overline{\phi_n}(\alpha_n)\|a\|_{K_p^{\phi_1,\dotsc,\phi_n}(\nA)},
\end{align*}
thus completing the demonstration.
\end{proof}

\begin{Rmk}
If $\hypu$ is not satisfied, then $K_p^{\phi_1,\dotsc,\phi_n}(\nA)=\{0\}$ and thus $K_p^{\phi_1,\dotsc,\phi_n}$ is not an interpolation functor, since $\Delta(\nA)\subsetc K_p^{\phi_1,\dotsc,\phi_n}(\nA)$ is not true when $\Delta(\nA)\neq \{0\}$.
\end{Rmk}

\begin{Rmk}
The fact that we employ assumption $\hypu$ rather than a more general type of $\hypd$ in Proposition \ref{Prop Kfct} indicates that we are addressing ``limiting cases'', following \cite{Lamby:25}. This provides an additional justification for using Boyd functions to generalize real interpolation spaces, since the conditions guaranteeing, for instance, that the equivalence theorem remains valid (see \cite{Lamby:25} for the case $n=1$, and Remark \ref{rmk lim cases n>1}) can be stated straightforwardly in terms of Boyd functions.
\end{Rmk}

\begin{Def}
 Let $p\in [1,\infty]$ and $\phi_1,\dotsc,\phi_n\in \B$ satisfying $\overline\hypd$
; $J_p^{\phi_1,\dotsc,\phi_n}(\nA)$ is the set of all $a\in \Sigma(\nA)$ such that there exists a function $u:\mathbb{P}^{n}_+ \to \Delta(\nA)$ for which we have
\[
 a=\int u(t)d\omega(t)
 =\int_{(0,\infty)^n} u(1,t_1,\dotsc,t_n) \, \frac{dt_1}{t_1}\dotsb\frac{dt_n}{t_n}\quad\text{ in }\Sigma(\nA)
\]
 and
\[
 \Phi_p^{\phi_1,\dotsc,\phi_n}\Big(J\big(t,u(t) \big)\Big)<\infty.
\]
This space is equipped with the norm
\[
\left\|a\right\|_{J_p^{\phi_1,\dotsc,\phi_n}(\nA)} =\inf_u \Phi_p^{\phi_1,\dotsc,\phi_n}\Big(J \big(t,u(t)\big)\Big),
\]
the infimum being taken on all $u:\mathbb{P}^{n}_+ \to \Delta(\nA)$ such that $a=\int u(t) \, d\omega(t)$.
\end{Def}
If $(t_1,...,t_n)\in (0,\infty)^n$, we set $\frac{dt}{t}=\frac{dt_1}{t_1}\dotsb \frac{dt_n}{t_n}.$
\begin{Prop}
 Let $p\in [1,\infty]$ and $\phi_1,\dotsc,\phi_n\in \B$ satisfying $\hypd$; the functor $J_p^{\phi_1,\dotsc,\phi_n}$ is an exact interpolation functor of type $f$ where
 \[
  f(t_0,\dotsc,t_n)=t_0\overline{\phi_1}(t_1/t_0)\dotsb \overline{\phi_n}(t_n/t_0).
\]

Moreover,
\[
 \|a\|_{J_p^{\phi_1,\dotsc,\phi_n}(nA)}\le C \frac{1}{\phi_1(t_1)\dotsb\phi_n(t_n)}J(1,t_1,\dotsc,t_n,a).
\]
\end{Prop}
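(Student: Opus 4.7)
The plan is to mirror the proof of Proposition \ref{Prop Kfct}, dualizing $K$ into $J$ and replacing direct integration by H\"older's inequality. In order: first establish the \emph{Moreover} inequality (which subsumes $\Delta(\nA)\subsetc J_p^{\phi_1,\dotsc,\phi_n}(\nA)$), then $J_p^{\phi_1,\dotsc,\phi_n}(\nA)\subsetc\Sigma(\nA)$, then the exact type-$f$ bound on morphisms.

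For the Moreover part, given $a\in\Delta(\nA)$ and $t=(t_1,\dotsc,t_n)\in(0,\infty)^n$, I would fix a nonnegative bump $\chi$ on $(0,\infty)^n$ supported in $[1,e]^n$ with $\int\chi(s)\,\frac{ds}{s}=1$ and set $u(1,s)=a\,\chi(s_1/t_1,\dotsc,s_n/t_n)$, which satisfies $\int u\,d\omega=a$ by a change of variables. On the support of $u$ each $s_j/t_j\in[1,e]$, so the Boyd-function properties give $\phi_j(s_j)\asymp\phi_j(t_j)$ uniformly in $t_j$ and $J(1,s,a)\le e\,J(1,t,a)$; plugging these into $\Phi_p^{\phi_1,\dotsc,\phi_n}(J(\cdot,u))$ and rescaling yields the claimed estimate, and specializing to $t_j=1$ delivers $\Delta(\nA)\subsetc J_p^{\phi_1,\dotsc,\phi_n}(\nA)$.

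For the embedding into $\Sigma(\nA)$, for any admissible representation $a=\int u\,d\omega$ I would use the pointwise inequalities $\|u(1,s)\|_{\Sigma(\nA)}\le\|u(1,s)\|_{A_j}\le J(1,s,u(1,s))/s_j$ valid for each $j\in\{0,\dotsc,n\}$ (with $s_0=1$), giving $\|u(1,s)\|_{\Sigma(\nA)}\le J(1,s,u(1,s))/\max\{1,s_1,\dotsc,s_n\}$, and then integrate and apply H\"older's inequality with conjugate exponent $q$ of $p$:
\[
 \|a\|_{\Sigma(\nA)}\le\Phi_p^{\phi_1,\dotsc,\phi_n}\bigl(J(\cdot,u)\bigr)\,\Bigl(\int\Bigl(\frac{\phi_1(s_1)\cdots\phi_n(s_n)}{\max\{1,s_1,\dotsc,s_n\}}\Bigr)^q\,\frac{ds}{s}\Bigr)^{1/q}.
\]
Taking the infimum over $u$ then delivers the continuous inclusion. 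For the morphism bound, setting $\beta_j=\|T\|_{A_j,B_j}/\|T\|_{A_0,B_0}$, linearity gives $Ta=\int Tu\,d\omega$ and pointwise $J^{\nB}(1,s,Tu(1,s))\le\|T\|_{A_0,B_0}\,J^{\nA}(1,\beta_1 s_1,\dotsc,\beta_n s_n,u(1,s))$; substituting into $\Phi_p^{\phi_1,\dotsc,\phi_n}$, changing variables $s_j\mapsto s_j/\beta_j$, and invoking $\phi_j(u_j)\le\overline{\phi_j}(\beta_j)\phi_j(u_j/\beta_j)$ yields exactly $\|Ta\|_{J_p^{\phi_1,\dotsc,\phi_n}(\nB)}\le\|T\|_{A_0,B_0}\,\overline{\phi_1}(\beta_1)\cdots\overline{\phi_n}(\beta_n)\,\|a\|_{J_p^{\phi_1,\dotsc,\phi_n}(\nA)}$ after taking infimum over representations, which is the type-$f$ estimate with constant $1$, i.e.\ exactness.

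The principal obstacle is the finiteness of the weight integral in Step 2 under hypothesis $\hypd$: this is the dual form of $\hypu$, and its verification requires a careful case analysis over the regions where $\max\{1,s_1,\dotsc,s_n\}$ is attained by a specific coordinate, combined with the polynomial envelopes from $\lb(\phi_j)>0$ (controlling behavior near the origin on coordinates not attaining the maximum) and $\ub(\phi_j)<1$ (controlling the decay rate of the coordinate that does). The remaining ingredients are routine analogues of the corresponding steps in Proposition \ref{Prop Kfct}.
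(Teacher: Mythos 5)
Your overall architecture is sound and the first and third steps match the paper's proof almost verbatim: the representation of $a\in\Delta(\nA)$ by a normalized bump (the paper uses $u(t)=a\,\chi_{(1,2)^n}(t)/(\log 2)^n$ and then the monotonicity $J(1,t,a)\le\max\{1,t_1/s_1,\dotsc,t_n/s_n\}J(1,s,a)$, where you instead rescale the bump to each $t$ -- equivalent), and the exactness estimate via $J^{\nB}(1,s,Tu)\le\|T\|_{A_0,B_0}J^{\nA}(1,\beta_1s_1,\dotsc,\beta_ns_n,u)$ followed by the change of variables and $\phi_j(s_j)\le\overline{\phi_j}(\beta_j)\phi_j(s_j/\beta_j)$ is exactly the paper's computation. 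Where you genuinely diverge is the embedding $J_p^{\phi_1,\dotsc,\phi_n}(\nA)\subsetc\Sigma(\nA)$: the paper does not prove this directly but routes it through the inclusion $J_p^{\phi_1,\dotsc,\phi_n}(\nA)\subsetc K_p^{\phi_1,\dotsc,\phi_n}(\nA)$ (justified later by appeal to Asekritova--Krugljak) composed with $K_p^{\phi_1,\dotsc,\phi_n}(\nA)\subsetc\Sigma(\nA)$ from Proposition \ref{Prop Kfct}, whereas you give the classical self-contained argument: the pointwise bound $\|u(1,s)\|_{\Sigma(\nA)}\le J(1,s,u(1,s))\min\{1,1/s_1,\dotsc,1/s_n\}$ plus H\"older. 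This is the route the paper itself gestures at in the remark introducing condition $\hypt$, and it has the merit of not depending on the forward reference to Section 4.

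There is, however, a genuine gap in your treatment of the resulting weight integral, and it is precisely where you locate "the principal obstacle". You claim that $\ub(\phi_j)<1$ "controls the decay rate of the coordinate that attains the maximum"; but the single factor $\max\{1,s_1,\dotsc,s_n\}^{-1}$ must absorb the growth of \emph{all} the large coordinates simultaneously, not just the maximal one. Concretely, for $n=2$ and $\phi_1(t)=\phi_2(t)=t^{3/4}$ (so $\hypd$ holds as literally stated), on the diagonal $s_1=s_2=s\to\infty$ your integrand behaves like $s^{3/4+3/4-1}=s^{1/2}$, and
\[
 \int\Bigl(\frac{\phi_1(s_1)\phi_2(s_2)}{\max\{1,s_1,s_2\}}\Bigr)^{q}\,\frac{ds_1}{s_1}\frac{ds_2}{s_2}=\infty
\]
for every $q\in[1,\infty]$. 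What is actually needed is the summed-index condition $\ub(\phi_1)+\dotsb+\ub(\phi_n)<1$ (the condition that appears in Theorem \ref{thm red to 1-fct} and in the classical Sparr setting $\sum_j\theta_j<1$), not merely $\max_j\ub(\phi_j)<1$. To be fair, the same issue affects the paper's own Remark that $\hypd$ implies $\hypu$ (the analogous counterexample lives near the origin there), so you have reproduced an imprecision already present in the hypotheses; but your proposed case analysis, as described, would not close under $\hypd$ read literally, and you should either strengthen the hypothesis to the summed form or fall back on the paper's route through $J_p^{\phi_1,\dotsc,\phi_n}(\nA)\subsetc K_p^{\phi_1,\dotsc,\phi_n}(\nA)$.
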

\begin{proof}
 For $a\in\Delta(\nA)$, we have
\[
 a=\frac{1}{(\log 2)^n}\int_{(0,\infty)^n} a\chi_{(1,2)^n}(t) \, \frac{dt}{t},
\]
and
\begin{align*}
 &\|a\|_{J_p^{\phi_1,\dotsc,\phi_n}(\nA)} \\
 &\le \Big(\int_{(0,\infty)^n}\big(\frac{1}{\phi_1(t_1)\dotsb \phi_n(t_n)}J(1,t_1,\dotsc,t_n,a\frac{\chi_{(1,2)^n}(t)}{(\log 2)^n})\big)^p \, \frac{dt}{t}\Big)^{1/p}\\
 &\le \Big(\int_{(0,\infty)^n}\big(\frac{1}{\phi_1(t_1)\dotsb \phi_n(t_n)(\log 2)^n}\max\{1,\frac{t_1}{s_1},\dotsc, \frac{t_n}{s_n} \}  \\
 &\qquad J(1,s_1,\dotsc,s_n,a)\big)^p \, \frac{dt}{t}\Big)^{1/p} \\
&\le C \frac{1}{\phi_1(s_1)\dotsb \phi_n(s_n)}J(1,s_1,\dotsc,s_n,a).
\end{align*}
In particular, by taking $s_j=1$ in the previous inequality, one gets 
\[
 \|a\|_{J_p^{\phi_1,\dotsc,\phi_n}(\nA)}\le C\|a\|_{\Delta(\nA)},
\]
so that $\Delta(\nA)\subsetc J_p^{\phi_1,\dotsc,\phi_n}(\nA)$. Moreover, with the inclusion of the equivalence theorem, one has
\[
 J_p^{\phi_1,\dotsc,\phi_n}(\nA)\subsetc K_p^{\phi_1,\dotsc,\phi_n}(\nA)\subsetc \Sigma(\nA).
\]
Now, let $\nA$ and $\nB$ be two couples in $\snA$ and let $T:\nA\to \nB$. If $a=\int u(s) \, d\omega(s)$ belongs to $J_p^{\phi_1,\dotsc,\phi_n}(\nA)$, then $Ta=\int Tu(s) \, d\omega(s)\in J_p^{\phi_1,\dotsc,\phi_n}(\nB)$ and 
\begin{align*}
 &J^{\nB}(1,s_1,\dotsc,s_n,Tu(1,s_1,\dotsc,s_n)) \\
&\le \|T\|_{A_0,B_0}J^{\nA}\big(1,s_1\frac{\|T\|_{A_1,B_1}}{\|T\|_{A_0,B_0}},\dotsc,s_n\frac{\|T\|_{A_n,B_n}}{\|T\|_{A_0,B_0}},u(1,s_1,\dotsc,s_n)\big),
\end{align*}
so that we have, for $\alpha_j=\frac{\|T\|_{A_j,B_j}}{\|T\|_{A_0,B_0}}$,
\begin{align*}
 &\|Ta\|_{J_p^{\phi_1,\dotsc,\phi_n}(\nB)} \\
 &\le \|T\|_{A_0,B_0}\Big(\int_{(0,\infty)^n} \big(\frac{1}{\phi_1(t_1)\dotsb \phi_n(t_n)}J^{\nA}(1,t_1\alpha_1,\dotsc,t_n\alpha_n,u(t))\big)^p \, \frac{dt}{t}\Big)^{1/p}\\
&=\|T\|_{A_0,B_0}\Big(\int_{(0,\infty)^n} \big(\frac{1}{\phi_1(s_1/\alpha_1)\dotsb \phi_n(s_n/\alpha_n)}J^{\nA}(1,s_1,\dotsc,s_n,u(t))\big)^p \, \frac{ds}{s}\Big)^{1/p}\\
&\le \|T\|_{A_0,B_0}\overline{\phi_1}(\alpha_1)\dotsb \overline{\phi_n}(\alpha_n)\|a\|_{J_p^{\phi_1,\dotsc,\phi_n}(\nA)},
\end{align*}
which ends the proof.
\end{proof}

\begin{Rmk}
Similarly to Proposition \ref{Prop Kfct}, a milder assumption can be adopted in the last proposition. Given $\phi_1,\dotsc,\phi_n\in \B$, $p\in[1,\infty]$, we say that condition $\hypt$ is satisfied when
\[
 \int_{(0,\infty)^n}\big(\frac{1}{\phi_1(t_1)\dotsb\phi_n(t_n)}\min\{1,1/t_1,\dotsc,1/t_n\}\big)^{p'} \, \frac{dt_1}{t_1}\dotsb \frac{dt_n}{t_n}<\infty,
\]
where $p'$ denotes the conjugate exponent of $p$. The last proposition remains valid upon replacing $\hypd$ by $\hypt$. Moreover, note that $\hypd$ implies $\hypt$ and if $p=1$, $\overline\hypd$ implies $\hypt$.
\end{Rmk}
 
By following the methodology outlined in \cite[Remark 4.6]{Spa:74}, one can readily verify the following equivalent definitions.
\begin{Prop}
Let $p\in [1,\infty]$ and $\phi_1,\dotsc,\phi_n\in \B$ satisfying $\hypd$
; an equivalent norm on $K_p^{\phi_1,\dotsc,\phi_n}(\nA)$ is given by
\[
 \inf \big(\int \|a_0(t)\|_{A_0}^p d\omega(t)\big)^{1/p}+\sum_{j=1}^n \big(\int (\frac{t_j\phi_1(t_0)\dotsb\phi_n(t_0)}{t_0\phi_1(t_1)\dotsb\phi_n(t_n)}\|a_j(t)\|_{A_j})^p \, d\omega(t)\big)^{1/p},
\]
where the infimum is taken over all decompositions $a=a_0(t)+\dotsb +a_n(t)$, $a_j(t)\in A_j$.
\end{Prop}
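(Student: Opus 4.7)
The plan is to establish the equivalence by a two-sided comparison, in the spirit of \cite[Remark 4.6]{Spa:74}. The key ingredients are the definition of $K(t,a)$ as an infimum over decompositions, Minkowski's inequality in $L^p(\mathbb{P}^n_+,d\omega)$, and the alternative representation of $\Phi_p^{\phi_1,\dotsc,\phi_n}$ provided by Remark \ref{rem:BanL}.

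\emph{Direction $\|a\|_{K_p^{\phi_1,\dotsc,\phi_n}(\nA)} \lesssim \text{proposed norm}$.} Given any parametrized decomposition $a = a_0(t) + \dotsb + a_n(t)$ with $a_j(t) \in A_j$ for $t \in \mathbb{P}^n_+$, subadditivity of the $K$-functional furnishes the pointwise inequality
\[
 K(t,a) \le \sum_{j=0}^n t_j \|a_j(t)\|_{A_j}.
\]
Multiplying by the weight $\tfrac{\phi_1(t_0)\dotsb\phi_n(t_0)}{t_0\phi_1(t_1)\dotsb\phi_n(t_n)}$, taking the $L^p(d\omega)$-norm, and applying Minkowski's inequality decomposes $\|a\|_{K_p^{\phi_1,\dotsc,\phi_n}(\nA)}$ into $n+1$ terms. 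For $j \ge 1$, each such term reproduces precisely the corresponding summand in the proposed norm; for $j=0$, passage to the representative $t_0=1$ via Remark \ref{rem:BanL} identifies it with $(\int \|a_0(t)\|_{A_0}^p \, d\omega)^{1/p}$ up to an absolute constant. Infimizing over decompositions closes this direction.

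\emph{Direction $\|a\|_{K_p^{\phi_1,\dotsc,\phi_n}(\nA)} \gtrsim \text{proposed norm}$.} For each $t \in \mathbb{P}^n_+$ and each $\varepsilon > 0$, the definition of $K(t,a)$ supplies a decomposition $a = a_0^\varepsilon(t) + \dotsb + a_n^\varepsilon(t)$ with $\sum_j t_j \|a_j^\varepsilon(t)\|_{A_j} \le (1+\varepsilon) K(t,a)$, and in particular $t_j \|a_j^\varepsilon(t)\|_{A_j} \le (1+\varepsilon) K(t,a)$ for every $j$. Substituting these bounds into the $n+1$ integrals of the proposed norm, each is dominated by $(1+\varepsilon)\|a\|_{K_p^{\phi_1,\dotsc,\phi_n}(\nA)}$, so the candidate infimum is at most $(n+1)(1+\varepsilon)$ times $\|a\|_{K_p^{\phi_1,\dotsc,\phi_n}(\nA)}$.

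\emph{Main obstacle.} The genuine technical point is that the pointwise near-optimal choices $t \mapsto (a_0^\varepsilon(t),\dotsc,a_n^\varepsilon(t))$ are not a priori jointly measurable, which is required for the integrals in the proposed norm to be meaningful. The standard remedy, inherited from Sparr, is to discretize $\mathbb{P}^n_+$ via a countable partition into dyadic cells in the logarithmic coordinates $\log t_j$, on which both $K(\cdot,a)$ and the Boyd-weight factors $\phi_j$ oscillate by bounded amounts; one then selects a single near-optimal decomposition on each cell and patches them together using indicator functions. Hypothesis $\hypd$, together with the submultiplicative estimates on $\bar\phi_j$ recalled in Section \ref{sec:Boyd}, ensures that the discretization error in $\Phi_p$ is absorbed into an absolute constant, yielding the full equivalence of norms.
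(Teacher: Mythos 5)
Your overall strategy---subadditivity of $K$ plus Minkowski's inequality for one inclusion, pointwise near-optimal decompositions for the other, and a dyadic patching argument to restore measurability---is precisely the Sparr Remark~4.6 methodology that the paper itself invokes, and for the terms $j=1,\dotsc,n$ both of your directions are correct. The gap is in your treatment of the $j=0$ term, in both directions. Applying Minkowski to $K(t,a)\le\sum_{j}t_j\|a_j(t)\|_{A_j}$ produces, for $j=0$, the quantity $\big(\int(\frac{\phi_1(t_0)\dotsb\phi_n(t_0)}{\phi_1(t_1)\dotsb\phi_n(t_n)}\|a_0(t)\|_{A_0})^p\,d\omega\big)^{1/p}$, which on the slice $t_0=1$ carries the weight $1/(\phi_1(t_1)\dotsb\phi_n(t_n))$; this weight is unbounded (it blows up as any $t_i\to 0^+$, since $\lb(\phi_i)>0$), so it is not ``an absolute constant'' and the term is not dominated by the unweighted $\big(\int\|a_0(t)\|_{A_0}^p\,d\omega\big)^{1/p}$. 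Symmetrically, in the reverse direction the bound $\|a_0^\epsilon(t)\|_{A_0}\le(1+\epsilon)K(1,t_1,\dotsc,t_n,a)$ does not make $\int\|a_0^\epsilon(t)\|_{A_0}^p\,d\omega$ finite: on the region where all $t_i\ge 1$ one has $K(1,t_1,\dotsc,t_n,a)\ge\|a\|_{\Sigma(\nA)}$, and the measure $\omega$ of that region is infinite, so this integral is not controlled by $\Phi_p^{\phi_1,\dotsc,\phi_n}\big(K(\cdot,a)\big)$.

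This is not a removable slip: the first term genuinely cannot appear unweighted. If $\sum_{i=1}^n\ub(\phi_i)<1$ (which is permitted under $\hypd$, e.g.\ already $n=1$ and $\phi_1(t)=t^{1/2}$), then on the cells $\{t_0=1,\ t_i\in[T,2T]\}$ all the weights $t_j/(\phi_1(t_1)\dotsb\phi_n(t_n))$, $j\ge 1$, tend to infinity as $T\to\infty$, so finiteness of the last $n$ integrals forces $\|a_j(t)\|_{A_j}\to 0$ in $L^p$-mean there; combined with $\|a\|_{\Sigma(\nA)}\le\|a_0(t)\|_{A_0}+\sum_{j\ge1}\|a_j(t)\|_{A_j}$ this forces $\int\|a_0(t)\|_{A_0}^p\,d\omega=\infty$ for every admissible decomposition of any $a\ne 0$. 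Your argument in fact proves the equivalence with the first term replaced by $\big(\int(\frac{\phi_1(t_0)\dotsb\phi_n(t_0)}{\phi_1(t_1)\dotsb\phi_n(t_n)}\|a_0(t)\|_{A_0})^p\,d\omega\big)^{1/p}$, i.e.\ the $j=0$ instance of the same weight pattern (this is also what the classical $n=1$ statement gives, with weight $t^{-\theta}$ on the $A_0$-term); that is the version you should state and prove, rather than asserting that the weight disappears.
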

\begin{Prop}
Let $p\in [1,\infty]$ and $\phi_1,\dotsc,\phi_n\in \B$ satisfying $\hypd$
; an equivalent norm on $J_p^{\phi_1,\dotsb,\phi_n}(\nA)$ is given by
\begin{align*}
 \inf_u\max \{ &
 \big(\int \|u(t)\|_{A_0}^p \, d\omega(t)\big)^{1/p}, \\
 & \max_{1\le j\le n} \big(\int(\frac{t_j\phi_1(t_0)\dotsb \phi_n(t_0)}{t_0\phi_1(t_1)\dotsb \phi_n(t_n)}\|u(t)\|_{A_j})^p \, d\omega(t)\big)^{1/p}\},
\end{align*}
where the infimum is taken over all $u:\mathbb{P}^n_+ \to \Delta(\nA)$ such that $a=\int u(t) \, d\omega(t)$ in $\Sigma(\nA)$.
 \end{Prop}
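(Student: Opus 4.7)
The plan is to follow the methodology of \cite[Remark~4.6]{Spa:74} cited just before the proposition, mirroring the treatment of the preceding $K$-method proposition and adapting the classical calculation to the Boyd-function setting. The statement is a two-sided comparison, up to multiplicative constants, between the defining norm of $J_p^{\phi_1,\dotsc,\phi_n}(\nA)$ and the announced infimum, so I would establish each inequality in turn.

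Starting from the definition $\|a\|_{J_p^{\phi_1,\dotsc,\phi_n}(\nA)}=\inf_u \Phi_p^{\phi_1,\dotsc,\phi_n}(J(t,u(t)))$, I would expand $J(t,u(t))=\max_{0\le j\le n} t_j\|u(t)\|_{A_j}$ and use the elementary bound
\[
 \max_{0\le j\le n}\|f_j\|_{L^p(d\omega)}\le \bigl\|\max_{0\le j\le n} f_j\bigr\|_{L^p(d\omega)}\le (n+1)^{1/p}\max_{0\le j\le n}\|f_j\|_{L^p(d\omega)}
\]
applied to $f_j(t)=\tfrac{t_j\phi_1(t_0)\dotsb\phi_n(t_0)}{t_0\phi_1(t_1)\dotsb\phi_n(t_n)}\|u(t)\|_{A_j}$. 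Unpacking $\Phi_p$ via Remark~\ref{rem:BanL} with the canonical section $t_0=1$, the resulting individual terms for $j\ge 1$ match exactly the announced $A_j$-terms in the proposition. The remaining $j=0$ component, however, comes with an extra factor $1/(\phi_1(t_1)\dotsb\phi_n(t_n))$ in front of $\|u\|_{A_0}$, whereas the proposition asks for the unweighted quantity $\bigl(\int\|u\|_{A_0}^p\,d\omega\bigr)^{1/p}$.

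To bridge this $A_0$-discrepancy, I would exploit the freedom in the representation $a=\int u\,d\omega$ by a change of variable on the section. Using the equivalence with functions in $\Bp_+$ recalled at the end of Section~\ref{sec:Boyd}, one may assume that each $\phi_j$ is Boyd-regular and monotonic; then a diagonal substitution $\psi=(\psi_1,\dotsc,\psi_n)\colon(0,\infty)^n\to(0,\infty)^n$ together with the rescaling $v(s)=u(\psi(s))\,\rho(s)$, with $\rho$ the density ensuring $\int v\,d\omega=\int u\,d\omega=a$, yields another admissible representation of $a$. The map $\psi$ is tuned so that $\rho$ absorbs the residual $1/(\phi_1\dotsb\phi_n)$-factor on the $A_0$-side, while $\hypd$ together with the two-sided control on $\overline{\phi_j}$ provided by $\lb(\phi_j),\ub(\phi_j)\in(0,1)$ ensures that the $A_j$-weights for $j\ge 1$ are distorted only by bounded multiplicative constants. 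Running this substitution and its inverse, and taking the infimum over $u$ on each side, delivers the two inequalities that together give the equivalence of norms.

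The main obstacle is the explicit construction and verification of $\psi$: one must simultaneously arrange that the Jacobian eliminates the $1/\phi$-weight on the $A_0$-integral and that the transformed weights on the remaining $A_j$-integrals remain uniformly comparable to $t_j\phi_1(t_0)\dotsb\phi_n(t_0)/(t_0\phi_1(t_1)\dotsb\phi_n(t_n))$. In the power case $\phi_j(t)=t^{\theta_j}$ the construction reduces to a product of scalar rescalings $s_j\mapsto s_j^{\alpha_j}$ and is routine bookkeeping; in the general Boyd setting it proceeds via Boyd-regular representatives and the uniform index bounds supplied by $\hypd$, after which the comparison of the two $\inf_u$'s becomes immediate.
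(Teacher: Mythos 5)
The paper offers no argument for this proposition beyond the pointer to \cite[Remark~4.6]{Spa:74}, so your proposal has to stand on its own; and it does correctly isolate the only nontrivial point, namely that on the section $t_0=1$ the defining norm weights the $A_0$-component of $J(t,u(t))$ by $1/(\phi_1(t_1)\dotsb\phi_n(t_n))$ while the displayed norm carries no weight at all on the $A_0$-integral. Your first step (interchanging the outer $\max$ with the $L^p(d\omega)$-norm at the cost of a factor $(n+1)^{1/p}$) is correct and disposes of everything except this discrepancy. The bridging device, however, cannot work. If $v(s)=\rho(s)u(\psi(s))$ is to represent the same element $a$, then $\rho$ is forced to equal the Jacobian of $\psi$ in the multiplicative coordinates, so the $A_0$-integral transforms only by the factor $\rho^{p-1}$: for $p=1$ it absorbs nothing, and for $p>1$ fixing it to absorb $(\phi_1\dotsb\phi_n)^{-p}$ leaves you no freedom to keep the $A_j$-weights ($j\ge 1$) comparable. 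More decisively, no device of any kind can succeed, because the two functionals are genuinely inequivalent. Take $n=1$, $\phi_1(t)=t^\theta$ with $0<\theta<1$, $p=1$, $A_0=\ell^1(\Z)$ and $A_1$ the space of sequences with $\|x\|_{A_1}=\sum_k2^{-k}|x_k|$. Then $\|e_K\|_{J_1^{\phi_1}(\nA)}\asymp 2^{-K\theta}$, whereas the displayed functional of $e_K$ equals $1$ for every $K$: the choice $u(t)=(\log 2)^{-1}\chi_{(T,2T)}(t)\,e_K$ with $T\to 0^+$ kills the $A_1$-term while the unweighted $A_0$-term stays at $1$, and conversely the unweighted $A_0$-integral always dominates $\|e_K\|_{A_0}=1$. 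Both inequalities between the two quantities therefore fail as $K\to\pm\infty$, so the ``routine bookkeeping in the power case'' is exactly where the argument breaks down.

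The conclusion to draw is that the $j=0$ term must be read with the same weight pattern as the others, i.e.\ as $\frac{t_0\phi_1(t_0)\dotsb\phi_n(t_0)}{t_0\phi_1(t_1)\dotsb\phi_n(t_n)}\|u(t)\|_{A_0}$, which on the section $t_0=1$ is $\|u(t)\|_{A_0}/(\phi_1(t_1)\dotsb\phi_n(t_n))$; this is the form consistent with \cite{Spa:74} and with the companion statement for $K_p^{\phi_1,\dotsc,\phi_n}$. With that reading, the elementary first half of your proposal is already the entire proof and no change of variables is needed. As literally written, the unweighted $A_0$-term describes a different (essentially $A_0$-closure-type) space, and you should not try to manufacture it.
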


\section{On the equivalence theorem}\label{sec:equiv}
The results presented in this section could readily be derived from the theory developed above \cite{Lamby:th}; however, since more general results are available \cite{Ase:97}, we shall instead apply those within our framework.

For $n>1$, the $K$- and the $J$- methods are not equivalent, with the $K$-method being more general.
\begin{Prop}
Given $p\in [1,\infty]$, if $\phi_1,\dotsc,\phi_n\in \B$ satisfy $\hypd$
, then
\[
 J_p^{\phi_1,\dotsc,\phi_n}(\nA)\subsetc K_p^{\phi_1,\dotsc,\phi_n}(\nA).
\]
\end{Prop}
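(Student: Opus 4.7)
The plan is to combine the integral representation of elements in the $J$-space with the sub-additivity of the $K$-functional, and then transform the resulting pointwise bound into an $L^p$ estimate via a convolution inequality on the multiplicative group $(0,\infty)^n$.

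Starting from a representative $u:\mathbb{P}^n_+\to \Delta(\nA)$ with $a=\int u(s)\,d\omega(s)$ in $\Sigma(\nA)$, the sub-additivity of $K(t,\cdot)$ gives $K(t,a)\le \int K(t,u(s))\,d\omega(s)$. For $b\in\Delta(\nA)$, the inequality $K(t,b)\le t_k\|b\|_{A_k}\le (t_k/s_k)J(s,b)$ holds for each $k$, so taking the minimum over $k$ yields
\[
K(t,a)\le \int \min_{0\le k\le n}\frac{t_k}{s_k}\,J(s,u(s))\,d\omega(s).
\]
Using the parametrization $(1,t_1,\dotsc,t_n)$ of $\mathbb{P}^n_+$ from Remark \ref{rem:BanL} and writing $G(s)=J(1,s_1,\dotsc,s_n,u(1,s_1,\dotsc,s_n))$, one obtains
\[
\Phi_p^{\phi_1,\dotsc,\phi_n}(K(\cdot,a))\le \Big\| \int \frac{\min\{1,t_1/s_1,\dotsc,t_n/s_n\}\,G(s)}{\phi_1(t_1)\dotsb\phi_n(t_n)}\,\frac{ds}{s}\Big\|_{L^p(dt/t)}.
\]

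Applying Minkowski's integral inequality in $t$, then substituting $u_j=t_j/s_j$ and using the Boyd estimate $\phi_j(s_j)/\phi_j(t_j)\le \bar\phi_j(1/u_j)$, the right-hand side is majorized by a multiplicative convolution $\|\Psi \ast \widetilde G\|_{L^p(du/u)}$, where $\widetilde G(s)=G(s)/\prod_{j=1}^n\phi_j(s_j)$ and
\[
\Psi(u)=\min\{1,u_1,\dotsc,u_n\}\prod_{j=1}^n \bar\phi_j(1/u_j).
\]
Young's inequality for convolutions on $((0,\infty)^n,\prod_j du_j/u_j)$ then gives $\|\Psi \ast \widetilde G\|_p\le \|\Psi\|_1\,\|\widetilde G\|_p$, with $\|\widetilde G\|_p=\Phi_p^{\phi_1,\dotsc,\phi_n}(J(\cdot,u(\cdot)))$. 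Passing to the infimum over representations $u$ of $a$ produces the desired inclusion.

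The main obstacle is verifying that $\Psi\in L^1(du/u)$ under hypothesis $\hypd$. A natural route is the tensorial bound $\min\{1,u_1,\dotsc,u_n\}\le \prod_j\min\{1,u_j\}^{\alpha_j}$, valid for any non-negative weights with $\sum_j\alpha_j\le 1$, which reduces the $n$-dimensional integrability to $n$ one-variable integrals $\int_0^\infty\min\{1,1/v\}^{\alpha_j}\bar\phi_j(v)\,dv/v$. Each is finite: near $v=0$ the integrand is $\bar\phi_j(v)$, which lies in $L_*^1(0,1)$ because $\lb(\phi_j)>0$; near $v=\infty$ the decay factor $v^{-\alpha_j}$ combines with the power bound $\bar\phi_j(v)\le Cv^{\ub(\phi_j)+\varepsilon}$ to produce a summable integrand, provided the $\alpha_j$ are chosen compatibly with the upper Boyd indices. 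Alternatively, as indicated at the beginning of Section~\ref{sec:equiv}, this inclusion is a direct consequence of the general framework developed in \cite{Ase:97}.
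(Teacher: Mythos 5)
Your architecture --- integral representation of $a$, sub-additivity of $K$, the fundamental inequality $K(t,b)\le \min_k (t_k/s_k)\,J(s,b)$ for $b\in\Delta(\nA)$, and Young's inequality for multiplicative convolution --- is the standard direct route, and all of those steps are sound. (For comparison, the paper proves nothing directly here: it simply invokes the proof of Theorem~1 in \cite{Ase:97}, so your closing ``alternatively'' sentence is in fact the paper's entire argument.) The genuine gap is in the step you yourself flag as the main obstacle: the integrability of $\Psi(u)=\min\{1,u_1,\dotsc,u_n\}\prod_{j}\bar\phi_j(1/u_j)$. Your tensorial bound requires exponents $\alpha_j\ge 0$ with $\sum_j\alpha_j\le 1$ and, for convergence of $\int_1^\infty v^{-\alpha_j}\bar\phi_j(v)\,dv/v$, essentially $\alpha_j>\ub(\phi_j)$ whenever $\ub(\phi_j)\ge 0$. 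Such a choice exists only if $\sum_j\max\{\ub(\phi_j),0\}<1$, which is strictly stronger than the hypothesis $\hypd$ as stated, namely $\max_j\ub(\phi_j)<1$.

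This is not just a lossy estimate that a sharper argument would repair: the kernel itself fails to be integrable under the literal form of $\hypd$. Take $n=2$ and $\phi_1(t)=\phi_2(t)=t^{9/10}$, so that $\hypd$ holds and $\bar\phi_j(1/u_j)=u_j^{-9/10}$. On the region $\{u_1\le u_2\le 1\}$ one has $\Psi(u)=u_1^{1/10}u_2^{-9/10}$, and
\[
\int_0^1\!\!\int_0^{u_2} u_1^{1/10}u_2^{-9/10}\,\frac{du_1}{u_1}\frac{du_2}{u_2}
= 10\int_0^1 u_2^{-8/5}\,\frac{du_2}{u_2}=\infty ,
\]
so $\Psi\notin L^1(du/u)$ and Young's inequality cannot be applied. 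The condition that actually makes your proof work is a sum condition on the upper indices (compare the hypotheses of \cite{Spa:74} and of Theorem~\ref{thm red to 1-fct}, where sums of Boyd indices appear), or intrinsically the integrability of the kernel itself in the spirit of $\hypu$ and its dual $\hypt$. The same example shows that $\hypd$ in its ``max'' form does not even imply $\hypu$, so the obstruction you hit reflects an imprecision in the stated hypothesis rather than a defect in your strategy; nevertheless, as written, the key integrability claim of your proof does not follow from $\hypd$, and the proof is incomplete without either strengthening the hypothesis to a sum condition or falling back on the citation of \cite{Ase:97}.
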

 \begin{proof}
The arguments provided in the proof of Theorem 1 in \cite{Ase:97} are sufficient to conclude.
\end{proof}
 
Now, we provide a counterexample for the reverse inclusion, inspired by the one used in the classical case (\cite{Spa:74, Yos:70}):
\begin{Ex}
Let $A_0$ and $A_1$ be two Banach spaces and $\phi\in\B$ with $0< \lb(\phi)\le \ub(\phi)< 1$, such that
\[
 A_1\subsetneq K_\infty^{\phi}(A_0,A_1)\subset A_0.
\]
Let $a\in K_\infty^{\phi}(A_0,A_1)\backslash A_1$ and let $A_2$ denote the (one dimensional) vector space generated by $a$ with norm inherited from $A_0$. 
We then consider the triplet $\nA=(A_0,A_1,A_2)$. It is clear that $\Delta(\nA)=0$, so that $J_p^{\phi_1,\phi_2}(\nA)=0$ for all $p$, $\phi_1$, and $\phi_2$.

We can construct $\phi_1,\phi_2\in\B$ such that $a\in K_p^{\phi_1,\phi_2}(\nA)$. Without loss of generality, one can assume that
\[
 \max\{\|a\|_{A_0},\|a\|_{K_\infty^{\phi}(A_0,A_1)}\}\le 1.
\]
By the given assumptions, we obtain
\begin{align*}
 K(t,a)&\le \min \{t_0\|a\|_{A_0},K^{(A_0,A_1)}\big((t_0,t_1),a\big),t_2\|a\|_{A_2}\}\\
 &\le \min \{t_0,\frac{t_0\phi(t_1)}{\phi(t_0)},t_2\}.
\end{align*}
Therefore, using $(s_1,s_2)=(\phi(t_1),t_2)$, one has
 \begin{align*}
 \|a\|_{K_p^{\phi_1,\phi_2}(\nA)}
 &\le \big(\int_{\mathbb{P}^{2}_+}(\frac{\phi_1(t_0)\phi_2(t_0)}{t_0\phi_1(t_1)\phi_2(t_2)}\min \{t_0,\frac{t_0\phi(t_1)}{\phi(t_0)},t_2\} )^p \,  dw(t)\big)^{1/p} \\
 &= (\int_{(0,\infty)^2} (\frac{1}{\phi_1(t_1)\phi_2(t_2)}\min \{1,\phi(t_1),t_2\} )^p \, \frac{dt_1}{t_1}\frac{dt_2}{t_2})^{1/p}\\
 &\le C (\int_{(0,\infty)^2}(\frac{1}{\phi_1(\xi(s_1))\phi_2(s_2)}\min \{1,s_1,s_2\})^p \, \frac{ds_1}{s_1}\frac{ds_2}{s_2})^{1/p},
\end{align*}
where $\xi=\phi^{-1}$. Thus, for $\ub(\phi_1)/\lb(\phi)+\ub(\phi_2)<1$, we have $a\in K_p^{\phi_1,\phi_2}(\nA)$.
\end{Ex}
 
We introduce a sufficient condition for the equivalence theorem to hold.
\begin{Def}
We denote by $\sigma(\nA)$ the set of all $a\in \Sigma(\nA)$ for which
\[
 \int \frac{K(t,a)}{\max t} \, d\omega (t)<\infty.
\]
\end{Def}
 
\begin{Def}
The condition $\F(\nA)$ is satisfied if, for every $a\in \sigma(\nA)$, there exists a function $u:\mathbb{P}^{n}_+ \to \Delta(\nA)$ such that
 \[
 a=\int u(t) \, d\omega(t)\quad\text{in }\Sigma(\nA)
\]
and
\[
 J\big(t,u(t)\big)\le C(\nA)K(t,a).
\]
\end{Def}
In the case $n=1$, $\F(\nA)$ is satisfied for any Banach couple $\nA$ \cite[Remark 5.3]{Spa:74}; for $n>1$ however, it is not necessarily the case.
\begin{Thm}\label{equivalence theorem n qcq}
Let $p\in [1,\infty]$ and $\phi_1,\dotsc,\phi_n\in \B$ satisfying $\hypd$
; if $\F(\nA)$ is satisfied, then
\begin{equation}\label{eq:spc eq}
 J_p^{\phi_1,\dotsc,\phi_n}(\nA)= K_p^{\phi_1,\dotsc,\phi_n}(\nA).
\end{equation}
\end{Thm}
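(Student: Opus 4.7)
The forward inclusion $J_p^{\phi_1,\dots,\phi_n}(\nA)\subsetc K_p^{\phi_1,\dots,\phi_n}(\nA)$ is already supplied by the previous proposition, so the entire content of the theorem lies in the reverse inclusion. My plan is therefore to fix $a\in K_p^{\phi_1,\dots,\phi_n}(\nA)$ and produce, via $\F(\nA)$, a representation $a=\int u(t)\, d\omega(t)$ that witnesses $a\in J_p^{\phi_1,\dots,\phi_n}(\nA)$ with a controlled norm. The structural inequality to aim at is
\[
 \|a\|_{J_p^{\phi_1,\dots,\phi_n}(\nA)}
 \le \Phi_p^{\phi_1,\dots,\phi_n}\!\bigl(J(t,u(t))\bigr)
 \le C(\nA)\,\Phi_p^{\phi_1,\dots,\phi_n}\!\bigl(K(t,a)\bigr)
 = C(\nA)\,\|a\|_{K_p^{\phi_1,\dots,\phi_n}(\nA)},
\]
which would close the proof once a legitimate $u$ is available.

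The first concrete step is to verify that $a\in\sigma(\nA)$, since $\F(\nA)$ is only stated for elements of $\sigma(\nA)$. Working in the coordinate chart $t_0=1$ (as in Remark~\ref{rem:BanL}), this reduces to showing
\[
 \int_{(0,\infty)^n}\frac{K(1,t_1,\dots,t_n,a)}{\max\{1,t_1,\dots,t_n\}}\,\frac{dt_1}{t_1}\dotsb\frac{dt_n}{t_n}<\infty.
\]
I would insert the weight $\phi_1(t_1)\dotsb\phi_n(t_n)$ and its inverse and apply Hölder's inequality with exponents $p$ and $p'$: the factor carrying $K(1,t_1,\dots,t_n,a)/(\phi_1(t_1)\dotsb\phi_n(t_n))$ is bounded in $L^p$ by $\|a\|_{K_p^{\phi_1,\dots,\phi_n}(\nA)}$, while the remaining factor is
\[
 \Bigl(\int_{(0,\infty)^n}\Bigl(\frac{\phi_1(t_1)\dotsb\phi_n(t_n)}{\max\{1,t_1,\dots,t_n\}}\Bigr)^{p'}\frac{dt_1}{t_1}\dotsb\frac{dt_n}{t_n}\Bigr)^{1/p'}.
\]
The polynomial envelopes $\phi_j(t_j)\lesssim t_j^{\lb(\phi_j)-\epsilon}$ near $0$ and $\phi_j(t_j)\lesssim t_j^{\ub(\phi_j)+\epsilon}$ near $\infty$, together with $\hypd$, secure the finiteness of this integral for $\epsilon$ small enough, so $a\in\sigma(\nA)$.

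Once membership in $\sigma(\nA)$ is established, $\F(\nA)$ delivers a function $u:\mathbb{P}^{n}_+\to\Delta(\nA)$ with $a=\int u(t)\,d\omega(t)$ in $\Sigma(\nA)$ and the pointwise bound $J(t,u(t))\le C(\nA)K(t,a)$. Since $\Phi_p^{\phi_1,\dots,\phi_n}$ is monotone in its nonnegative argument, plugging $u$ into the defining infimum of $\|a\|_{J_p^{\phi_1,\dots,\phi_n}(\nA)}$ and applying the pointwise bound inside $\Phi_p^{\phi_1,\dots,\phi_n}$ yields the target inequality. Combined with the reverse inclusion already known, this gives \eqref{eq:spc eq}.

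The delicate point is really the first step: $\F(\nA)$ has a hypothesis, and justifying it from membership in $K_p^{\phi_1,\dots,\phi_n}(\nA)$ is the place where the Boyd-index assumptions $\hypd$ genuinely enter. Once $a\in\sigma(\nA)$ is secured, the rest is a direct transcription of the classical Holmstedt/Aseev mechanism \cite{Ase:97} to the Boyd-weighted setting, which is precisely what the authors allude to in opening this section.
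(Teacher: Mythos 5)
Your architecture is the standard one: it is essentially the internal mechanism of Theorem~1 of Asekritova--Krugljak, which the paper's own proof simply cites (with $\Phi_p^{\phi_1,\dotsc,\phi_n}$ as the underlying lattice) rather than unpacking. Writing it out directly is legitimate, and the second half of your argument is fine: once $a\in\sigma(\nA)$ is known, $\F(\nA)$ produces an admissible $u$ with $J(t,u(t))\le C(\nA)K(t,a)$, and the lattice monotonicity of $\Phi_p^{\phi_1,\dotsc,\phi_n}$ gives the reverse inclusion, the forward one being the preceding proposition.

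The genuine gap is the step you yourself flag as delicate, namely the embedding $K_p^{\phi_1,\dotsc,\phi_n}(\nA)\subsetc\sigma(\nA)$. You dispose of it by H\"older plus the phrase ``together with $\hypd$, secure the finiteness,'' but the conjugate-exponent integral
\[
\int_{(0,\infty)^n}\Bigl(\frac{\phi_1(t_1)\dotsb\phi_n(t_n)}{\max\{1,t_1,\dotsc,t_n\}}\Bigr)^{p'}\,\frac{dt_1}{t_1}\dotsb\frac{dt_n}{t_n}
\]
is \emph{not} finite under $\hypd$ alone when $n\ge 2$. In the region where all $t_j\ge 1$ and $t_k$ is the largest, integrating the envelope $\prod_j t_j^{\ub(\phi_j)+\epsilon}/t_k$ over $1\le t_j\le t_k$ leaves $t_k^{\sum_j\ub(\phi_j)+n\epsilon-1}$ against $dt_k/t_k$, which diverges unless $\sum_{j=1}^n\ub(\phi_j)<1$; the condition $\hypd$ only gives $\max_j\ub(\phi_j)<1$. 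Concretely, for $n=2$ and $\phi_1(t)=\phi_2(t)=t^{0.9}$ the hypotheses of $\hypd$ hold but your majorant integral is infinite, so the H\"older argument collapses exactly where the Boyd indices were supposed to enter. What is actually needed is the symmetric condition $\lb\bigl(t/(\phi_1(t)\dotsb\phi_n(t))\bigr)>0$, i.e.\ control of the \emph{sum} of the upper indices --- in the power case this is precisely Sparr's requirement $\sum_j\theta_j<1$. To be fair, the paper is itself imprecise on this point (its remark that $\hypd$ implies $\hypu$ runs into the same obstruction for $n\ge 2$), but since your proof, unlike the paper's citation, stands or falls on this computation, you must either strengthen the hypothesis or supply a genuinely different proof that $K_p^{\phi_1,\dotsc,\phi_n}(\nA)\subset\sigma(\nA)$; as written the step fails.
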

\begin{proof}
This follows from Theorem~1 in \cite{Ase:97} upon taking (\ref{eq:Phip}) as the underlying Banach function lattice.
\end{proof}
If the condition is satisfied, we denote the space \eqref{eq:spc eq} by $\nA^{\phi_1,\dotsc,\phi_n}_p$.
 
%
\begin{Rmk}\label{rmk lim cases n>1}
If one wishes to impose a weaker assumption than $\hypd$ (given that $\F(\nA)$ is known to be optimal), one must consider conditions involving the Boyd functions and their derivatives in order to establish a modified fundamental lemma. We do not provide the details here, but the adaptation from \cite{Lamby:25} is straightforward.
\end{Rmk}
\begin{Prop}
 If $\F(\nA)$ is satisfied and $\nB$ is a retract of $\nA$, then $\F(\nB)$ is also satisfied.
\end{Prop}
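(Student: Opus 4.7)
The plan is to pull back the decomposition from $\nA$ to $\nB$ via the retraction. Recall that $\nB$ being a retract of $\nA$ means there exist morphisms $\iota:\nB\to \nA$ and $\pi:\nA\to \nB$ (in the category $\snA$) with $\pi\circ \iota = \id_{\nB}$. In particular, $\iota$ is continuous $B_j\to A_j$ and $\pi$ is continuous $A_j\to B_j$ for every $j$, so both morphisms act continuously on $\Sigma$ and on $\Delta$.

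First, given $b\in \sigma(\nB)$, I would show that $\iota(b)\in \sigma(\nA)$. Writing any decomposition $b=b_0+\dotsb+b_n$ in $\Sigma(\nB)$ and pushing it through $\iota$ gives a decomposition of $\iota(b)$ in $\Sigma(\nA)$, so
\[
 K^{\nA}(t,\iota(b))\le C_\iota\, K^{\nB}(t,b), \qquad C_\iota := \max_{0\le j\le n}\|\iota\|_{B_j,A_j},
\]
and integrating against $d\omega/\max t$ yields $\iota(b)\in \sigma(\nA)$. Applying $\F(\nA)$, I obtain a representation $\iota(b)=\int u(t)\,d\omega(t)$ in $\Sigma(\nA)$ with $u:\mathbb{P}^n_+\to \Delta(\nA)$ and $J^{\nA}(t,u(t))\le C(\nA)K^{\nA}(t,\iota(b))$.

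Next I would define $v(t) := \pi(u(t))$, which lies in $\Delta(\nB)$ because $\pi$ sends $\Delta(\nA)$ into $\Delta(\nB)$. Since $\pi$ is continuous on $\Sigma$, it commutes with the (Bochner-type) integral, giving
\[
 b = \pi(\iota(b)) = \pi\!\left(\int u(t)\,d\omega(t)\right) = \int v(t)\,d\omega(t) \quad\text{in }\Sigma(\nB).
\]
For the $J$-estimate, with $C_\pi := \max_{0\le j\le n}\|\pi\|_{A_j,B_j}$,
\[
 J^{\nB}(t,v(t)) = \max_{j} t_j\|\pi u(t)\|_{B_j} \le C_\pi\, J^{\nA}(t,u(t)) \le C_\pi C(\nA)\, K^{\nA}(t,\iota(b)) \le C_\pi C(\nA) C_\iota\, K^{\nB}(t,b).
\]
Thus $\F(\nB)$ holds with $C(\nB):=C_\pi C(\nA)C_\iota$.

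The only genuinely non-trivial point is justifying that $\pi$ commutes with the vector-valued integral $\int u(t)\,d\omega(t)$; this is routine once one knows $u$ is Bochner-integrable with values in $\Delta(\nA)\subsetc \Sigma(\nA)$ and $\pi:\Sigma(\nA)\to \Sigma(\nB)$ is a bounded linear operator, which follows from the assumption that $\pi$ is a morphism in $\snA$. Everything else is a direct chase of the retract identities and the definitions of $K$ and $J$.
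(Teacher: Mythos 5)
Your proof is correct and follows essentially the same route as the paper: push $b$ forward by the inclusion, apply $\F(\nA)$ to $\iota(b)$, and pull the resulting representation back by the projection, using the boundedness of both morphisms on the $K$- and $J$-functionals. The paper's version (with $I$ and $P$ in place of $\iota$ and $\pi$) is just a more terse rendering of the same chain of inequalities.
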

\begin{proof}
 Let $b\in\sigma(\nB)$, so that $Ib\in \sigma(\nA)$. Therefore, there exists $u:\mathbb{P}^{n}_+ \to \Delta(\nA)$ such that
 \[
 Ib=\int u(t) \, d\omega(t)\quad\text{in }\Sigma(\nA)\]
 and
 \[
 J\big(t,u(t)\big)\le C(\nA)K(t,Ib).
\]
We have 
\[
 b=PIb=\int Pu(t) \, d\omega(t)\quad\text{in } \Sigma(\nB)
\]
 and
 \[
 J^{\nB}\big(t,Pu(t)\big)
 \le C J^{\nA}\big(t,u(t)\big)
 \le C K^{\nA}(t,Ib)\le CK^{\nB}(t,b),
\]
bringing the proof to a close.
\end{proof}
 
By interpolation, we have the following result:
\begin{Prop}
If $\nB$ is a retract of $\nA$, then $K_p^{\phi_1,\dotsc,\phi_n}(\nB)$ is a retract of $K_p^{\phi_1,\dotsc,\phi_n}(\nA)$ and $J_p^{\phi_1,\dotsc,\phi_n}(\nB)$ is a retract of $J_p^{\phi_1,\dotsc,\phi_n}(\nA)$, with the same $P$ and $I$.
\end{Prop}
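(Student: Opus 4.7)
The plan is to apply the interpolation functors $K_p^{\phi_1,\dotsc,\phi_n}$ and $J_p^{\phi_1,\dotsc,\phi_n}$ directly to the retraction data. By hypothesis there exist morphisms $I:\nB\to \nA$ and $P:\nA\to \nB$ in $\snA$ such that $PI=\id_{\mathcal{B}}$. Since each of $K_p^{\phi_1,\dotsc,\phi_n}$ and $J_p^{\phi_1,\dotsc,\phi_n}$ has been shown to be an interpolation functor of type $f(t_0,\dotsc,t_n)=t_0\overline{\phi_1}(t_1/t_0)\dotsb\overline{\phi_n}(t_n/t_0)$, the images $I$ and $P$ under either functor coincide with the respective restrictions of the original morphisms.

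Concretely, I first note that applying $K_p^{\phi_1,\dotsc,\phi_n}$ to the morphism $I:\nB\to \nA$ produces a bounded operator
\[
 I:K_p^{\phi_1,\dotsc,\phi_n}(\nB)\to K_p^{\phi_1,\dotsc,\phi_n}(\nA)
\]
with norm controlled by $f(\|I\|_{B_0,A_0},\dotsc,\|I\|_{B_n,A_n})$, and similarly $P$ induces a bounded operator in the reverse direction. The composition $PI$, being the functorial image of the identity by clause (ii) of the definition of interpolation functor, is the identity on $K_p^{\phi_1,\dotsc,\phi_n}(\nB)$; hence $(I,P)$ exhibits $K_p^{\phi_1,\dotsc,\phi_n}(\nB)$ as a retract of $K_p^{\phi_1,\dotsc,\phi_n}(\nA)$, with the same $I$ and $P$ as in the original data. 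The argument for $J_p^{\phi_1,\dotsc,\phi_n}$ is identical, using the corresponding proposition that establishes its functoriality and boundedness.

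There is no genuine obstacle here: the only point that deserves a moment's care is that clause (ii) in the definition of interpolation functor ensures that $F(I)$ really is the restriction of $I$ (and likewise for $P$), so that $F(P)F(I)=F(PI)=F(\id)=\id$ holds literally and not merely up to isomorphism. Once this is acknowledged, the statement follows immediately from the functoriality and boundedness already established for both the $K$- and $J$-methods.
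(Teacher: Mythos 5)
Your argument is correct and is exactly what the paper intends: it states this proposition with the single phrase ``By interpolation,'' meaning precisely that one applies the interpolation functors to the morphisms $I$ and $P$ and uses functoriality together with clause (ii) to get $F(P)F(I)=F(PI)=\id$ literally on the interpolated space. Nothing further is needed.
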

 
\begin{Thm}\label{thm red to 1-fct}
Let $1\le m\le n-1$, $0< \lambda_1+\dotsb+\lambda_m< 1$, $\eta_0,\eta_{m+1},\dotsc,\eta_n\in \B$ be such that $0<\lb(\eta_0)+\lb(\eta_{m+1})+\dotsb +\lb(\eta_n)$ and $\ub(\eta_0)+\ub(\eta_{m+1})+\dotsb +\ub(\eta_n)<1$. Set
\[
 (\phi_1,\dotsc,\phi_m,\phi_{m+1},\dotsc,\phi_n)= (\eta_0^{\lambda_1},\dotsc,\eta_0^{\lambda_m},\eta_{m+1},\dotsc,\eta_m)
\]
and let $\nB^{(j)}=(A_j,A_{m+1},\dotsc,A_{n})$ for $j\in\{0,\dotsc,m\}$.
If
\[
 K_{p}^{\eta_0,\eta_{m+1},\dotsc,\eta_n}(\nB^{(j)})= J_{p}^{\eta_0,\eta_{m+1},\dotsc,\eta_n}(\nB^{(j)})
\]
(this is the case if $\F(\nB^{(j)})$ is satisfied) for $j\in\{0,\dotsc ,m\}$ and if 
\[
 K^{\phi_1,\dotsc,\phi_n}_p(\nA)= J^{\phi_1,\dotsc,\phi_n}_p(\nA)
\]
(this is the case if $\F(\nA)$ is satisfied), then 
\[
 \big((\nB^{(0)})_{p}^{\eta_0,\eta_{m+1},\dotsc,\eta_n},\dotsc,(\nB^{(m)})_{p}^{\eta_0,\eta_{m+1},\dotsc,\eta_n}\big)^{\lambda_1,\dotsc,\lambda_m}_p= \nA^{\phi_1,\dotsc,\phi_n}_p.
\]
\end{Thm}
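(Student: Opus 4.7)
The plan is to show both inclusions by matching the $K$-functional characterisations of each side. Under the hypotheses, the inner spaces $C_j := (\nB^{(j)})_p^{\eta_0,\eta_{m+1},\ldots,\eta_n}$ and the right-hand side $\nA^{\phi_1,\ldots,\phi_n}_p$ admit equivalent $K$- and $J$-descriptions, so one may switch between them freely. Setting $\mathbf{C}:=(C_0,\ldots,C_m)$, the conditions on $\lambda_1,\ldots,\lambda_m$ together with the index conditions on the $\eta_j$ ensure that $\hypd$ holds for each step, so all functors involved are non-degenerate, and one may interpret $\mathbf{C}^{\lambda_1,\ldots,\lambda_m}_p$ as the $K$-method on $\mathbf{C}$ with Boyd parameters $t^{\lambda_1},\ldots,t^{\lambda_m}$.

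For the forward inclusion $\nA^{\phi_1,\ldots,\phi_n}_p \subsetc \mathbf{C}^{\lambda_1,\ldots,\lambda_m}_p$, I would start from a near-optimal $J$-representation $a = \int u(t)\,d\omega(t)$ in $\Sigma(\nA)$. Using the formula \eqref{eq:Phip}, I would split the product measure $\frac{dt_1}{t_1}\cdots\frac{dt_n}{t_n}$ on $(0,\infty)^n$ into an outer piece over the first $m$ coordinates and an inner piece over the last $n-m$. For each fixed $s=(s_1,\ldots,s_m)$, set
\[
v(s) := \int_{(0,\infty)^{n-m}} u(1,s_1,\ldots,s_m,t_{m+1},\ldots,t_n)\,\frac{dt_{m+1}}{t_{m+1}}\cdots\frac{dt_n}{t_n}.
\]
The inner $J$-norm estimate, combined with the assumed $K$-$J$ equivalence on each $\nB^{(j)}$, shows that $v(s)$ lies in $\bigcap_{j=0}^{m} C_j = \Delta(\mathbf{C})$, and $a = \int v(s)\,d\omega(s)$ is therefore an outer $J$-representation. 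The factorisation $\phi_j = \eta_0^{\lambda_j}$ for $j\le m$ is precisely what aligns the inner Boyd weight against the outer power weight $t^{\lambda_j}$, so a Minkowski-type inequality yields $\|a\|_{\mathbf{C}^{\lambda_1,\ldots,\lambda_m}_p} \le C\|a\|_{\nA^{\phi_1,\ldots,\phi_n}_p}$.

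The reverse inclusion proceeds dually. Starting from $a \in \mathbf{C}^{\lambda_1,\ldots,\lambda_m}_p$, for every decomposition $a = c_0 + \cdots + c_m$ in $\Sigma(\mathbf{C})$ I would successively decompose each $c_j$ via the inner $K$-method on $\nB^{(j)}$. Combining the two layers gives, for each $t=(t_0,\ldots,t_n)$, an upper estimate of $K_{\nA}(t,a)$ as a weighted sum of inner $K$-functionals, which when inserted into $\Phi_p^{\phi_1,\ldots,\phi_n}$ and simplified using $\phi_j = \eta_0^{\lambda_j}$ yields the converse bound. Alternatively, this inclusion can be obtained directly from Theorem~1 in \cite{Ase:97} applied to the explicit lattice given by \eqref{eq:Phip}, exactly as in the proof of Theorem~\ref{equivalence theorem n qcq}.

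The main obstacle is the verification, in the forward step, that the slice $v(s)$ belongs simultaneously to all the $C_j$, rather than to each one separately. This is possible because the tuples $\nB^{(j)}$ share the fixed spaces $A_{m+1},\ldots,A_n$ and differ only in the first component $A_j$: a single almost-optimal $J$-representation of $a$ in $\nA$ thus yields, upon slicing, valid inner $J$-representations for every $j$ at once, thanks to $\F(\nB^{(j)})$. The change of variable $s_j=\eta_0(t_j)$ (for $j\le m$) produces the exact alignment of weights between the inner and outer lattices, after which the reiteration reduces to a Fubini interchange with controlled constants.
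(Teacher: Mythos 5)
The paper's own proof is a one-line appeal to the reiteration theorem of \cite{Ase:97} applied to the lattice \eqref{eq:Phip}; you instead sketch a direct proof, which is legitimate in principle, but your sketch is asserted rather than carried out precisely where the content of that reiteration theorem lies, and the specific construction you propose for the forward inclusion does not work as written. The outer lattice weight for $\mathbf{C}^{\lambda_1,\dotsc,\lambda_m}_p$ at $(1,s_1,\dotsc,s_m)$ is $s_1^{-\lambda_1}\dotsm s_m^{-\lambda_m}$, while the target weight carries $\phi_j(t_j)^{-1}=\eta_0(t_j)^{-\lambda_j}$; these are not pointwise comparable unless $\eta_0(t)\asymp t$, so the substitution $s_j=\eta_0(t_j)$ is forced. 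But the hypotheses only control the \emph{sum} $\lb(\eta_0)+\lb(\eta_{m+1})+\dotsb+\lb(\eta_n)$, not the indices of $\eta_0$ separately, so $\eta_0$ need not be equivalent to an invertible element of $\Bp_{\pm}$ and that substitution need not be admissible. More seriously, the outer $J$-functional contains the terms $s_j\|v(s)\|_{C_j}$, and the head norm $\|\cdot\|_{A_j}$ enters the inner functional of $\nB^{(j)}$ with coefficient $1$ but enters $J^{\nA}(1,t_1,\dotsc,t_n,\cdot)$ with coefficient $t_j$; absorbing the extra factor $s_j$ requires an $s$-dependent rescaling of the head variable of the inner representation, which your construction (the same unrescaled slice $v(s)$ fed to every $C_j$) does not provide. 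So the claimed ``Minkowski-type inequality'' / ``Fubini interchange with controlled constants'' does not close the estimate.

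The reverse inclusion has the analogous gap: after bounding $K^{\nA}(1,t,a)$ by a sum of inner $K$-functionals via the class-$\clas_K$ estimate of Proposition \ref{Prop Kfct}, one must choose the outer parameter $s$ as a function of $t$ so that the weights reassemble into $\Phi_p^{\phi_1,\dotsc,\phi_n}$; for $m\ge n$ this is the change of variables of Proposition \ref{pro:gen Rn}, but here $m\le n-1$, so that mechanism is unavailable and the optimization over $s$ is precisely the nontrivial step you leave out. Note also that Theorem~1 of \cite{Ase:97} is the $K$--$J$ equivalence theorem, not the reiteration theorem, so the ``alternative'' citation you offer for the reverse inclusion points to the wrong result. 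The overall architecture --- establish $\nA_p^{\phi_1,\dotsc,\phi_n}\subsetc J_p^{\lambda_1,\dotsc,\lambda_m}(\mathbf{C})\subsetc K_p^{\lambda_1,\dotsc,\lambda_m}(\mathbf{C})\subsetc \nA_p^{\phi_1,\dotsc,\phi_n}$ and close the chain --- is the right shape for a self-contained proof, but as it stands the argument has a genuine gap at the weight-matching stage in both directions.
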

\begin{proof}
This follows as a direct consequence of the reiteration theorem in \cite{Ase:97}, with $\Phi$ defined by (\ref{eq:Phip}).
\end{proof}
 
 \section{Some properties}
 From now on, we assume that condition $\hypd$ is always satisfied.
The following property is a direct consequence of H\"older's inequality.
\begin{Prop}
For $1\le p\le q\le \infty$, we have
\[
 K_p^{\phi_1,\dotsc,\phi_n}(\nA)\subsetc K_q^{\phi_1,\dotsc,\phi_n}(\nA)
\]
and
\[
 J_p^{\phi_1,\dotsc,\phi_n}(\nA)\subsetc J_q^{\phi_1,\dotsc,\phi_n}(\nA).
\]
\end{Prop}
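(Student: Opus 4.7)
The plan is to handle the $K$- and $J$-methods separately; although both reduce to applications of Hölder's inequality, they require somewhat different presentations.

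For the $K$-method, set $F(t_1,\dotsc,t_n)=K(1,t_1,\dotsc,t_n,a)/(\phi_1(t_1)\dotsb\phi_n(t_n))$ on $(0,\infty)^n$ equipped with the measure $d\mu=dt_1/t_1\dotsb dt_n/t_n$, so that by (\ref{eq:Phip}) one has $\|F\|_{L^p(d\mu)}=\|a\|_{K_p^{\phi_1,\dotsc,\phi_n}(\nA)}$. Proposition~\ref{Prop Kfct} furnishes the key pointwise bound $\|F\|_{L^\infty(d\mu)}\le C\,\|a\|_{K_p^{\phi_1,\dotsc,\phi_n}(\nA)}$. The log-convexity of $L^r$-norms (a direct consequence of Hölder) then gives, for $p\le q<\infty$,
\[
\|F\|_{L^q}\le \|F\|_{L^p}^{p/q}\,\|F\|_{L^\infty}^{1-p/q}\le C^{1-p/q}\,\|a\|_{K_p^{\phi_1,\dotsc,\phi_n}(\nA)},
\]
while $q=\infty$ is immediate from Proposition~\ref{Prop Kfct}. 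This yields the continuous inclusion $K_p^{\phi_1,\dotsc,\phi_n}(\nA)\subsetc K_q^{\phi_1,\dotsc,\phi_n}(\nA)$.

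For the $J$-method, the analogous $L^\infty$-bound is not available, so I would pass through a dyadic regrouping of an admissible representation. Given $a\in J_p^{\phi_1,\dotsc,\phi_n}(\nA)$ and $u$ representing $a$, partition $(0,\infty)^n$ into the cells $I_k=\prod_j[2^{k_j},2^{k_j+1})$, $k\in\Z^n$, and define $\tilde u$ to be constant on each $I_k$, equal to $(\log 2)^{-n}\int_{I_k}u(1,s)\,ds/s$. Then $a=\int \tilde u(t)\,d\omega(t)$ in $\Sigma(\nA)$. Using the inequality $J(t,\cdot)\le\max_j(t_j/s_j)J(s,\cdot)$, and the fact that each $\phi_j$ is quasi-constant on $I_k$ thanks to the submultiplicativity of $\overline{\phi_j}$, one gets, for $t\in I_k$,
\[
\frac{J(1,t,\tilde u(1,t))}{\phi_1(t_1)\dotsb\phi_n(t_n)}\le C\int_{I_k}h(s)\,\frac{ds}{s},
\qquad h(s):=\frac{J(1,s,u(1,s))}{\phi_1(s_1)\dotsb\phi_n(s_n)}.
\]
Hölder's inequality on each cell yields $\int_{I_k}h(s)\,ds/s\le (\log 2)^{n/p'}(\int_{I_k}h^p\,ds/s)^{1/p}$, and summing in $k$ together with the elementary inclusion $\sum_k b_k^{q/p}\le(\sum_k b_k)^{q/p}$ (valid since $q/p\ge 1$) with $b_k=\int_{I_k}h^p\,ds/s$ bounds $\Phi_q^{\phi_1,\dotsc,\phi_n}(J(t,\tilde u(t)))$ by a constant multiple of $\Phi_p^{\phi_1,\dotsc,\phi_n}(J(t,u(t)))$; the case $q=\infty$ is handled by one application of Hölder on a single cell. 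Taking the infimum over $u$ concludes.

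The main obstacle is the $J$-case: unlike for the $K$-method there is no pointwise bound on the integrand in terms of the $J_p$-norm, which is why the dyadic regrouping is necessary. The only subtlety there is to control the replacement of $\phi_j(s_j)$ by $\phi_j(2^{k_j})$ on each cell, which is precisely the content of the submultiplicativity of $\overline{\phi_j}$ from Definition~\ref{def:Boyd}.
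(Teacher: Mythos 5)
Your proposal is correct and follows the standard route that the paper's one-line justification (``a direct consequence of H\"older's inequality'') gestures at: for the $K$-method, the pointwise bound of Proposition~\ref{Prop Kfct} gives the $L^\infty$ control and log-convexity of the $L^r$-norms does the rest, exactly as you write. Your treatment of the $J$-method correctly notes that H\"older alone is not enough in the continuous formulation and supplies the missing ingredient --- the dyadic regrouping of the representation $u$, with $\overline{\phi_j}$ locally bounded (a standard consequence of submultiplicativity and measurability) making each $\phi_j$ quasi-constant on cells --- which is sound and is the detail the paper elides.
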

Given $\phi\in \ban$, let us set $\phi_q(t)= \phi(t^q)^{1/q}$, with the usual modification in the case $q=\infty$. We directly get the following result:
\begin{Prop}
Let $\pi$ be a permutation of $\{0,1,\dotsc,n\}$ and set
\[
 \nA^{(\pi)}=(A_{\pi(0)},A_{\pi(1)},\dotsc,A_{\pi(n)}),
\]
$\phi_0(t)=t/(\phi_1(t)\dotsb \phi_n(t))$ and
\[
 (\psi_0,\psi_1,\dotsc,\psi_n)
 =\big((\phi_{\pi(0)})_p,(\phi_{\pi(1)})_p,\dotsc,(\phi_{\pi(n)})_p\big).
\] 
We have
\[
 K^{\phi_1,\dotsc,\phi_n}_p(\nA)=K^{\psi_1,\dotsc,\psi_n}_p(\nA^{(\pi)})
\]
and 
\[
 J^{\phi_1,\dotsc,\phi_n}_p(\nA)= J^{\psi_1,\dotsc,\psi_n}_p(\nA^{(\pi)}).
\]
 \end{Prop}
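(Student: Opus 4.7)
The strategy is to exploit the symmetry of $\Phi_p^{\phi_1,\dotsc,\phi_n}$ that emerges once one introduces the derived Boyd function $\phi_0(t)=t/(\phi_1(t)\dotsb\phi_n(t))$. With this convention, the weight $\frac{\phi_1(t_0)\dotsb\phi_n(t_0)}{t_0\phi_1(t_1)\dotsb\phi_n(t_n)}$ rewrites as $\frac{1}{\phi_0(t_0)\phi_1(t_1)\dotsb\phi_n(t_n)}$, and the projective formulation in Remark~\ref{rem:BanL} becomes the manifestly symmetric expression
\[
 \|a\|_{K_p^{\phi_1,\dotsc,\phi_n}(\nA)}=\Big(\int_{\mathbb{P}^{n}_+}\Big(\frac{K_{\nA}(t,a)}{\phi_0(t_0)\phi_1(t_1)\dotsb\phi_n(t_n)}\Big)^p \, d\omega(t)\Big)^{1/p},
\]
in which the pairs $(t_j,\phi_j)$, $j=0,\dotsc,n$, play interchangeable roles; the same is true for the $J$-norm.

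First I would record the elementary covariance of the $K$- and $J$-functionals under a permutation $\pi$ of the indices: since their defining infimum and maximum are taken over indexed sums, a relabeling of the spaces is equivalent to a relabeling of the weights, so that
\[
 K_{\nA}(t_0,\dotsc,t_n,a)=K_{\nA^{(\pi)}}(t_{\pi(0)},\dotsc,t_{\pi(n)},a),
\]
and analogously for $J_{\nA}$. Inserting this into the symmetric integral and performing the change of variable $u_j=t_{\pi(j)}$, I would invoke the invariance of the Haar measure $d\omega$ on $\mathbb{P}^{n}_+$ under coordinate permutations---readable directly off the alternating sum of Remark~\ref{rem:BanL}---to obtain the integrand
\[
 \Big(\frac{K_{\nA^{(\pi)}}(u,a)}{\phi_{\pi(0)}(u_0)\phi_{\pi(1)}(u_1)\dotsb\phi_{\pi(n)}(u_n)}\Big)^p.
\]
This is the symmetric form of the $K$-norm on $\nA^{(\pi)}$ with Boyd functions $\phi_{\pi(1)},\dotsc,\phi_{\pi(n)}$, provided $\phi_{\pi(0)}$ really is the derived zeroth function for the new tuple; this reduces to the identity $\prod_{j=0}^n\phi_{\pi(j)}(u)=u$, which is immediate since the product is left invariant by relabeling.

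The $J$-case is handled by the same substitution: any representation $a=\int v(s)\,d\omega(s)$ in $\Sigma(\nA)=\Sigma(\nA^{(\pi)})$ is simultaneously a representation for both tuples, and the covariance of $J$ combined with the invariance of $d\omega$ converts the $J$-norm of $v$ associated to $(\nA,\phi_1,\dotsc,\phi_n)$ into the one associated to $(\nA^{(\pi)},\phi_{\pi(1)},\dotsc,\phi_{\pi(n)})$; passing to the infimum over $v$ then preserves the identity. The $p$-subscript present in $(\phi_{\pi(j)})_p(t)=\phi_{\pi(j)}(t^p)^{1/p}$ amounts to the further substitution $u_j\mapsto u_j^p$ inside each integral, under which one checks that $\prod_{j=0}^n(\phi_{\pi(j)})_p(u)=\big(\prod_{j=0}^n\phi_{\pi(j)}(u^p)\big)^{1/p}=u$, so the derived-function identity remains in force and the argument carries through unchanged. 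The only genuine subtleties I anticipate are the invariance of $d\omega$ under coordinate permutations and the verification that $\phi_{\pi(0)}$ continues to play the role of the derived zeroth function; everything else is routine change-of-variables bookkeeping.
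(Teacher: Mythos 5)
The paper offers no argument for this proposition (it is presented as immediate), so your proposal can only be judged on its own terms, and its central step does not hold. The ``manifestly symmetric expression''
\[
 \Big(\int_{\mathbb{P}^{n}_+}\Big(\frac{K_{\nA}(t,a)}{\phi_0(t_0)\phi_1(t_1)\dotsb\phi_n(t_n)}\Big)^p \, d\omega(t)\Big)^{1/p}
\]
is not a well-defined integral on the quotient $\mathbb{P}^{n}_+$: the integrand must be invariant under $(t_0,\dotsc,t_n)\mapsto(\lambda t_0,\dotsc,\lambda t_n)$, and since $K$ is jointly homogeneous of degree one this requires $\prod_{j}\phi_j(\lambda t_j)=\lambda\prod_j\phi_j(t_j)$ for all $\lambda>0$ and all $t$ --- which holds only for power functions. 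The identity $\prod_j\phi_j(u)=u$ that you invoke only covers the diagonal $t_0=\dotsb=t_n$. The actual definition of the norm is the slice $t_0=1$ of Remark~\ref{rem:BanL}; after permuting coordinates the relevant slice becomes $t_{\pi^{-1}(0)}=1$, and moving between slices via the homogeneity of $K$ produces weights of the form $\phi_j(t_j/t_0)$ that do not refactor into a product of single-variable Boyd functions. This is precisely where the function-parameter setting departs from Sparr's power-weight setting, and your argument assumes the difficulty away. A concrete test: for $n=1$, $\pi$ the transposition and $\phi_1(t)=t^{1/2}(1+|\log t|)$, the substitution $t\mapsto 1/t$ in the slice formula shows that $K_p^{\phi_1}(A_0,A_1)=K_p^{\psi}(A_1,A_0)$ for $\psi(s)=s\,\phi_1(1/s)=s^{1/2}(1+|\log s|)$, which is not equivalent to $\phi_0(s)=s^{1/2}(1+|\log s|)^{-1}$; so the permuted parameter cannot simply be read off the symmetric form.

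The second gap is the handling of the subscript $p$. The substitution $u_j\mapsto u_j^p$ does not interact with the $K$-functional in the way you need ($K$ is homogeneous jointly, not separately in each variable), so it cannot turn the weight $\prod_j\phi_{\pi(j)}(u_j)$ into $\prod_j(\phi_{\pi(j)})_p(u_j)$ while leaving $K_{\nA^{(\pi)}}(u,a)$ and the measure unchanged; the verification that $\prod_j(\phi_{\pi(j)})_p(u)=u$ is only a normalization check and does not justify the step. Note the internal tension: if your symmetric-integral argument were valid, it would deliver the conclusion with $\psi_j=\phi_{\pi(j)}$ and no $p$-modification at all, i.e.\ a different statement from the one you set out to prove. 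A correct proof must work on the slice $t_0=1$, track the change of slice explicitly, and will produce transformed parameters involving $t\mapsto 1/\phi_{\pi(j)}(1/t)$ (for indices moved into or out of position $0$) rather than $(\phi_{\pi(j)})_p$; reconciling this with the formula in the statement is exactly the point that needs to be addressed and that your argument leaves open.
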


\begin{Prop}\label{reduction to lower order}
If $A_{n-1}=A_n$ ($n>1$), then 
\[
 K^{\phi_1,\dotsc,\phi_n}_p(\nA)=K^{\phi_1,\dotsc,\phi_{n-1}\phi_n}_p(A_0,\dotsc,A_{n-1})
\]
and 
\[
 J^{\phi_1,\dotsc,\phi_n}_p(\nA) =J^{\phi_1,\dotsc,\phi_{n-1}\phi_n}_p(A_0,\dotsc,A_{n-1}).
\]
\end{Prop}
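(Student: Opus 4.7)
The plan is to show that the assumption $A_{n-1}=A_n$ makes the $K$- and $J$-functionals of $\nA$ collapse to those of the shortened tuple $\nB=(A_0,\dotsc,A_{n-1})$, and then to run a Fubini plus change-of-variables argument on the extra $(t_{n-1},t_n)$-pair. A direct inspection of the infimum and the maximum gives
\[
 K^{\nA}(t_0,\dotsc,t_n,a)=K^{\nB}(t_0,\dotsc,t_{n-2},\min\{t_{n-1},t_n\},a),
\]
\[
 J^{\nA}(t_0,\dotsc,t_n,a)=J^{\nB}(t_0,\dotsc,t_{n-2},\max\{t_{n-1},t_n\},a);
\]
the first follows because an optimal split $a_{n-1}+a_n=b\in A_{n-1}$ puts the entire mass on whichever of $t_{n-1},t_n$ is smaller, while the second is immediate. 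Also $\Delta(\nA)=\Delta(\nB)$ and $\Sigma(\nA)=\Sigma(\nB)$.

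For the $K$-identity, I would substitute into the defining integral and use Fubini to isolate the $(t_{n-1},t_n)$-integration. Splitting the plane into $\{t_{n-1}\le t_n\}$ and $\{t_n<t_{n-1}\}$, the change of variables $s=\min\{t_{n-1},t_n\}$, $r=\max\{t_{n-1},t_n\}/s\in[1,\infty)$ sends $dt_{n-1}dt_n/(t_{n-1}t_n)$ to $dsdr/(sr)$, and reduces the inner integral to one of the form $\int_1^\infty(\phi_j(s)/\phi_j(sr))^p\,dr/r$ with $j\in\{n-1,n\}$. Using $\phi_j(s)/\phi_j(sr)\le\bar{\phi_j}(1/r)$ together with $\lb(\phi_j)>0$ (which makes $\bar{\phi_j}(1/r)^p$ integrable against $dr/r$ on $(1,\infty)$) gives a uniform upper bound on this inner integral, and restricting $r$ to $[1,2]$ and using the continuity of $\phi_j$ gives a matching positive lower bound. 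This yields the two-sided comparison
\[
 \iint\frac{G(\min\{t_{n-1},t_n\})^p}{(\phi_{n-1}(t_{n-1})\phi_n(t_n))^p}\,\frac{dt_{n-1}dt_n}{t_{n-1}t_n}\asymp\int_0^\infty\frac{G(s)^p}{(\phi_{n-1}(s)\phi_n(s))^p}\,\frac{ds}{s}
\]
for any nonnegative $G$, and reintegrating in $(t_1,\dotsc,t_{n-2})$ gives the claimed equality of norms.

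For the $J$-identity, one inclusion is easy by concentration: given a near-optimal $\tilde u$ on $(0,\infty)^{n-1}$ representing $a$, the choice
\[
 u(t_1,\dotsc,t_n)=\tfrac{1}{\log 2}\,\tilde u(t_1,\dotsc,t_{n-2},t_n)\,\chi_{\{t_n\le t_{n-1}\le 2t_n\}}
\]
satisfies $a=\int u\,d\omega$ by Fubini, and on the thin support $\max\{t_{n-1},t_n\}=t_{n-1}\asymp t_n$ and $\phi_{n-1}(t_{n-1})\asymp\phi_{n-1}(t_n)$, so the bound $\|u\|_{J_p^{\phi_1,\dotsc,\phi_n}(\nA)}\le C\|\tilde u\|_{J_p^{\phi_1,\dotsc,\phi_{n-1}\phi_n}(\nB)}$ transfers with the $dt_{n-1}/t_{n-1}$-integration over $[t_n,2t_n]$ producing exactly the factor $\log 2$ that cancels the normalization. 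The reverse inclusion is the main obstacle: given $u$ realizing the $J$-norm on $\nA$, I would set $\tilde u(s_1,\dotsc,s_{n-1})=\int_0^\infty u(s_1,\dotsc,s_{n-2},s_{n-1},t_n)\,dt_n/t_n$, which satisfies $a=\int\tilde u\,d\omega$ by Fubini, and bound $\tilde J(\cdot,s_{n-1},\tilde u)\le\int J(\cdot,s_{n-1},t_n,u)\,dt_n/t_n$ via the triangle inequality together with the pointwise domination $\tilde J\le J$. Applying Minkowski's integral inequality to pull the $dt_n/t_n$-integration outside the $L^p(ds/s)$-norm and absorbing the mismatch between $\phi_n(s_{n-1})$ in the weight and $\phi_n(t_n)$ in the target norm via $\phi_n(s_{n-1})\ge\phi_n(t_n)/\bar{\phi_n}(t_n/s_{n-1})$, I would split $t_n$ into the regions $t_n\le s_{n-1}$ and $t_n\ge s_{n-1}$ and use $\lb(\phi_n)>0$ to handle $\bar{\phi_n}$ near the origin and $\ub(\phi_n)<1$ to handle it at infinity, closing the estimate in a manner parallel to the $K$-side argument.
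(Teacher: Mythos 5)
Your $K$-argument is exactly the paper's: the identity $K^{\nA}(1,t_1,\dotsc,t_n,a)=K^{\nA'}(1,t_1,\dotsc,\min\{t_{n-1},t_n\},a)$ with $\nA'=(A_0,\dotsc,A_{n-1})$, followed by the reduction of the $(t_{n-1},t_n)$-integral to a single integral against the weight $\phi_{n-1}(s)\phi_n(s)$. You supply the two-sided estimate of the inner integral (upper bound from $\int_0^1\overline{\phi_j}(v)^p\,dv/v<\infty$, which needs $\lb(\phi_{n-1}),\lb(\phi_n)>0$; lower bound from restricting to $r\in[1,2]$) that the paper merely asserts, and that part is correct. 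The paper proves only the $K$-case, so your $J$-discussion goes beyond it; the ``concentration'' direction $J^{\phi_1,\dotsc,\phi_{n-1}\phi_n}_p(A_0,\dotsc,A_{n-1})\subsetc J^{\phi_1,\dotsc,\phi_n}_p(\nA)$, via a representative supported on the strip $\{t_n\le t_{n-1}\le 2t_n\}$, is fine.

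The reverse $J$-inclusion as you outline it does not close, for two reasons. First, with $\tilde u(s)=\int_0^\infty u(s,t_n)\,dt_n/t_n$ the weight mismatch produces the factor $\overline{\phi_n}(t_n/s_{n-1})$ integrated over \emph{all} $t_n$; since $\hypd$ forces $\lb(\phi_n)>0$, the function $\phi_n$ (hence $\overline{\phi_n}$) grows at infinity, so the contribution from $t_n\gg s_{n-1}$ is not summable, and $\ub(\phi_n)<1$ cannot rescue it: that condition controls $t/\phi_n(t)$, and no compensating power of $t_n$ is available because the functional $J(1,s_1,\dotsc,s_{n-1},\cdot)$ for the shortened tuple does not involve $t_n$ at all. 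Second, even where the weight is harmless, Minkowski leaves you with the quantities $\|g(\cdot,v\,s_{n-1})\|_{L^p(ds/s)}$ with $g(s,t_n)=J^{\nA}(1,s,t_n,u(s,t_n))/(\phi_1(s_1)\dotsb\phi_{n-1}(s_{n-1})\phi_n(t_n))$, i.e.\ $L^p$-norms of $g$ restricted to the hypersurfaces $t_n=v\,s_{n-1}$; for a general measurable $u$ such restrictions are not dominated by $\|g\|_{L^p(\frac{ds}{s}\frac{dt_n}{t_n})}$, so the estimate cannot be closed ``in a manner parallel to the $K$-side''. The natural repair mirrors the $K$-side: use the identity $J^{\nA}(1,t_1,\dotsc,t_n,b)=J^{\nA'}(1,t_1,\dotsc,\max\{t_{n-1},t_n\},b)$ and push $u$ forward along $(t_{n-1},t_n)\mapsto\max\{t_{n-1},t_n\}$ rather than integrating out $t_n$ at fixed $s_{n-1}$, so that only ratios $\overline{\phi_{n-1}}(\rho)$, $\overline{\phi_n}(\rho)$ with $\rho\le 1$ appear (summable precisely because the lower Boyd indices are positive), and first pass to a discretized representation, constant on dyadic cells, for which the slice quantities become sub-sums of the full $\ell^p$-sum and are therefore trivially dominated by it.
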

\begin{proof}
Let us show the result for the case $K$. For $\nA'=(A_0,\dotsc,A_{n-1})$, we have
 \[
 K^{\nA}(1,t_1,\dotsc,t_n,a)=K^{\nA'}(1,t_1,\dotsc,\min\{t_{n-1},t_n\},a).
\]
This identity allows to conclude, since
\begin{align*}
&\int_{(0,\infty)^2}\big(\frac{1}{\phi_{n-1}(t_{n-1})\phi_n(t_n)}K^{\nA}(1,t_1,\dotsc,t_n,a)\big)^p \, \frac{dt_{n-1}}{t_{n-1}}\frac{dt_n}{t_n} \\
&= C\int_0^\infty \big(\frac{1}{\phi_{n-1}(s)\phi_n(s)}K^{\nA'}(1,t_1,\dotsc,s,a)\big)^p \, \frac{ds}{s}.
\end{align*}
\end{proof}
 
Let us highlight the following outcome from \cite{Spa:74}.
 \begin{Prop}
If $A_{n-1}=A_n$ and $\F(A_0,\dotsc,A_{n-1})$ is satisfied, then $\F(\nA)$ is also satisfied.
\end{Prop}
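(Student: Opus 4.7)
The plan is to lift the $J$-decomposition provided by $\F(\nA')$, with $\nA'=(A_0,\dotsc,A_{n-1})$, to one for $\nA$ by smearing it across a narrow diagonal strip in the extra variable $t_n$.

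Given $a\in\sigma(\nA)$, note first that $\Delta(\nA)=\Delta(\nA')$ and $\Sigma(\nA)=\Sigma(\nA')$ because $A_{n-1}=A_n$. To invoke $\F(\nA')$ I must check $a\in\sigma(\nA')$. I would use the identity $K^{\nA}(1,t_1,\dotsc,t_n,a)=K^{\nA'}(1,t_1,\dotsc,t_{n-2},\min(t_{n-1},t_n),a)$ from Proposition~\ref{reduction to lower order} and restrict the defining integral of $\sigma(\nA)$ to the strip $t_{n-1}<t_n<2t_{n-1}$; there $\min(t_{n-1},t_n)=t_{n-1}$, $\max(1,t_1,\dotsc,t_n)\le 2\max(1,t_1,\dotsc,t_{n-1})$, and the inner $t_n$-integral contributes a factor $\log 2$, so the $\sigma(\nA)$-integrand dominates (up to a constant) the $\sigma(\nA')$-integrand. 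Hence $a\in\sigma(\nA')$ and $\F(\nA')$ furnishes $u':\mathbb{P}^{n-1}_+\to\Delta(\nA)$ with $a=\int u'\,d\omega'$ in $\Sigma(\nA)$ and $J^{\nA'}(s,u'(s))\le C(\nA')K^{\nA'}(s,a)$.

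I would then pick a nonnegative $\psi\in L^\infty(0,\infty)$ supported in $(1,2)$ with $\int_0^\infty\psi(r)\,dr/r=1$, and set
\[
 u(1,t_1,\dotsc,t_n)=\psi(t_n/t_{n-1})\,u'(1,t_1,\dotsc,t_{n-1}).
\]
Fubini's theorem (absolute integrability of $u$ in $\Sigma(\nA)$ follows from $\|u(t)\|_{\Sigma(\nA)}\le J^{\nA}(t,u(t))/\max(t)$ together with the pointwise bound obtained next) then recovers $a=\int u(t)\,d\omega(t)$ in $\Sigma(\nA)$.

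For the bound $J^{\nA}(t,u(t))\le CK^{\nA}(t,a)$, only the strip $t_{n-1}<t_n<2t_{n-1}$ matters, as $u(t)=0$ elsewhere. On it, since $A_{n-1}=A_n$, $J^{\nA}(1,t_1,\dotsc,t_n,v)=J^{\nA'}(1,t_1,\dotsc,t_{n-2},\max(t_{n-1},t_n),v)$, and the scaling inequality $J(t,v)\le\max_j(t_j/s_j)J(s,v)$ lets me replace $\max(t_{n-1},t_n)\le 2t_{n-1}$ by $t_{n-1}$ at the cost of a factor~$2$; $\F(\nA')$ then converts the $J^{\nA'}$ into a $K^{\nA'}$, which by Proposition~\ref{reduction to lower order} equals $K^{\nA}(1,t_1,\dotsc,t_n,a)$ on the strip. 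The main obstacle is the first step: one must make sure the comparison of $\sigma(\nA)$ with $\sigma(\nA')$ does not introduce a divergent logarithmic factor, which is precisely why I restrict to a bounded diagonal strip rather than the full half-region $\{t_n\ge t_{n-1}\}$.
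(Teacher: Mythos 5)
Your proof is correct, and it supplies in full the argument that the paper itself omits (the proposition is stated there with only a citation to Sparr's work): the diagonal-strip construction $u(1,t_1,\dotsc,t_n)=\psi(t_n/t_{n-1})\,u'(1,t_1,\dotsc,t_{n-1})$, together with the identities $K^{\nA}=K^{\nA'}(\dotsc,\min(t_{n-1},t_n),\cdot)$ and $J^{\nA}=J^{\nA'}(\dotsc,\max(t_{n-1},t_n),\cdot)$, is precisely the standard route, and your preliminary check that $\sigma(\nA)\subset\sigma(\nA')$ via restriction to the strip is the right way to make $\F(\nA')$ applicable. The only presentational quibble is that the Bochner integrability needed for the Fubini step is justified by the bound $J^{\nA}(t,u(t))\le CK^{\nA}(t,a)$ that you establish only afterwards, so the order of the two steps should be reversed in a polished write-up.
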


\section{Power theorem}\label{sec:powth}
Let us remind ourselves that if $\|\cdot\|$ is a quasi-norm on $A$, then $\|\cdot\|^\alpha$ is also a quasi-norm for $\alpha>0$. We denote the space equipped with this functional by $A^{(\alpha)}$. 
If $\nA=(A_0,\dotsc,A_n)$ is in $\snA$, we naturally define
\[
 \nA^{(p)}=(A_0^{(p)},\dotsc,A_n^{(p)}).
\]
We also set $\phi_p(t)=\phi(t^p)^{1/p}$ for all $t>0$.
\begin{Thm}
Let $p\in [1,\infty]$, $q>0$ and $\phi_1,\dotsc,\phi_n\in \B$ satisfying $\hypd$
; we have
\[
 K^{\phi_1,\dotsc,\phi_n}_p(\nA^{(q)})=K^{(\phi_1)_q,\dotsc,(\phi_n)_q}_{pq}(\nA)^{(q)}.\]
\end{Thm}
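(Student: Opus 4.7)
The plan is to verify the identity by a direct computation showing that the defining norms of the two spaces are equivalent, via a single substitution of variables after a preliminary identification of the relevant $K$-functionals.

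First I would unwind definitions. Since the quasi-norm on $A_j^{(q)}$ is $\|\cdot\|_{A_j}^q$, the $K$-functional of $\nA^{(q)}$ reads
\[
 K^{\nA^{(q)}}(t,a)=\inf_{a=a_0+\dotsb+a_n}\bigl(t_0\|a_0\|_{A_0}^q+\dotsb+t_n\|a_n\|_{A_n}^q\bigr).
\]
The key preliminary claim is the pointwise equivalence
\[
 K^{\nA^{(q)}}(t,a)\asymp K^{\nA}(t_0^{1/q},\dotsc,t_n^{1/q},a)^{q},
\]
with constants depending only on $n$ and $q$. To prove it, I rewrite $t_j\|a_j\|_{A_j}^q=(t_j^{1/q}\|a_j\|_{A_j})^q$ and use that on $\R^{n+1}$ the $\ell^q$- and $\ell^1$-(quasi-)norms are equivalent; hence for any decomposition
\[
 \sum_j (t_j^{1/q}\|a_j\|_{A_j})^q \asymp \Bigl(\sum_j t_j^{1/q}\|a_j\|_{A_j}\Bigr)^q,
\]
and taking the infimum over decompositions (using that $x\mapsto x^q$ commutes with $\inf$ on $[0,\infty)$) yields the claim.

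Next I substitute this equivalence into the explicit formula for the $K$-norm given by~(\ref{eq:Phip}), obtaining
\[
 \|a\|_{K_p^{\phi_1,\dotsc,\phi_n}(\nA^{(q)})}^p\asymp\int_{(0,\infty)^n}\biggl(\frac{K^{\nA}(1,t_1^{1/q},\dotsc,t_n^{1/q},a)^{q}}{\phi_1(t_1)\dotsb\phi_n(t_n)}\biggr)^p\,\frac{dt}{t}.
\]
I now perform the change of variables $s_j=t_j^{1/q}$, so that $dt_j/t_j = q\,ds_j/s_j$ and $\phi_j(t_j)=\phi_j(s_j^q)=(\phi_j)_q(s_j)^q$. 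The right-hand side becomes
\[
 q^{n}\int_{(0,\infty)^n}\biggl(\frac{K^{\nA}(1,s_1,\dotsc,s_n,a)}{(\phi_1)_q(s_1)\dotsb(\phi_n)_q(s_n)}\biggr)^{pq}\,\frac{ds}{s}
 = q^{n}\,\|a\|_{K_{pq}^{(\phi_1)_q,\dotsc,(\phi_n)_q}(\nA)}^{pq}.
\]
Taking $p$-th roots and then recalling that $\|\cdot\|_{X^{(q)}}=\|\cdot\|_X^{q}$ identifies the right-hand side, up to the constant $q^{n/p}$, with $\|a\|_{K_{pq}^{(\phi_1)_q,\dotsc,(\phi_n)_q}(\nA)^{(q)}}$, which is precisely the equivalence required to conclude equality of the two spaces.

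The only step that carries non-trivial content is the equivalence $K^{\nA^{(q)}}(t,a)\asymp K^{\nA}(t^{1/q},a)^{q}$; once this is established, the rest is a bookkeeping computation with the measure $\frac{dt}{t}$ and the identity $\phi_j(s^q)=(\phi_j)_q(s)^q$. I expect the main subtlety to be the careful justification that $\inf$ commutes with the power $q$ and that the $\ell^q$-$\ell^1$ equivalence can be applied before taking the infimum (which requires only that both inequalities hold pointwise on decompositions, with constants independent of $a$). The hypothesis $\hypd$ is not used directly in this calculation, but it ensures that both sides are well-defined interpolation spaces, so that the equality of norms genuinely identifies them as Banach spaces.
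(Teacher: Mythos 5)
Your proof is correct and follows essentially the same route as the paper: the paper also reduces everything to the pointwise equivalence $K(t,a;\nA^{(q)})\asymp K(t^{1/q},a;\nA)^q$ (obtained there by comparing $K$ with the max-version $K_\infty$, for which the power commutes exactly, rather than via the $\ell^q$--$\ell^1$ equivalence you invoke, but this is the same idea), followed by the change of variables $t_j=s_j^q$ and the identity $\phi_j(s^q)=(\phi_j)_q(s)^q$. The only cosmetic difference is that the paper treats the case $p=\infty$ explicitly with suprema, which your ``usual modification'' covers.
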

\begin{proof}
Let us define
\[
 K_\infty(t,a)=
 \inf_a \max \{t_0\|a_0\|_{A_0},t_1\|a_1\|_{A_1},\dotsc,t_n\|a_n\|_{A_n}\},
\]
where the infimum is taken over all the decompositions $a=a_0+\dotsb+a_n$, with $a_j\in A_j$. It can be observed that
\[
 K_\infty(t,a)\le K(t,a)\le n K_\infty(t,a).
\]
If $p=\infty$, we have
\begin{align*}
 \|a\|_{K^{\phi_1,\dotsc,\phi_n}_\infty(\nA^{(q)})}
 &\asymp \sup_s \frac{1}{\phi_1(s_1)}\dotsb\frac{1}{\phi_n(s_n)}K_\infty(1,s_1,\dotsc,s_n,a;\nA^{(q)})\\
&= \sup_t \frac{1}{(\phi_1)_q(t_1)^q}\dotsb\frac{1}{(\phi_n)_q(t_n)^q}K_\infty(1,t_1,\dotsc,t_n,a;\nA)^q.
\end{align*}
If $p<\infty$, we get
\begin{align*}
 &\|a\|_{K^{\phi_1,\dotsc,\phi_n}_p(\nA^{(q)}) }^p \\
 &\asymp \int_{(0,\infty)^n}\big(\frac{1}{\phi_1(s_1)}\dotsb\frac{1}{\phi_n(s_n)} K_\infty(1,s_1,\dotsc,s_n)\big)^p \, \frac{ds_1}{s_1}\dotsb \frac{ds_n}{s_n}\\
 &= \int_{(0,\infty)^n}\big(\frac{1}{(\phi_1)_q(t_1)}\dotsb \frac{1}{(\phi_n)_q(t_n)}K_\infty(1,t_1,\dotsc,t_n)\big)^{pq} \, \frac{dt_1}{t_1}\dotsb \frac{dt_n}{t_n},
\end{align*}
thereby concluding the proof.
\end{proof}

\section{Stability}
In this section, we use $X$ to denote an intermediate space with respect to $\nA$.
\begin{Def}
Let $\phi_1,\dotsc,\phi_n\in \B$ satisfying $\overline\hypd$, 
\begin{enumerate}
\item\label{en1:1} $X$ is of class $\clas_K(\phi_1,\dotsc,\phi_n;\nA)$ if \[K(1,t_1,\dotsc,t_n,a)\le C \phi_1(t_1)\dotsb \phi_n(t_n)\|a\|_X,
\]
for all $a\in X$;
\item\label{en1:2} $X$ is of class $\clas_J(\phi_1,\dotsc,\phi_n;\nA)$ if
\[
 \|a\|_X\le C \frac{1}{\phi_1(t_1)\dotsb \phi_n(t_n)}J(1,t_1,\dotsc,t_n,a),
\]
for all $a\in \Delta(\nA)$.
\end{enumerate}
We say that $X$ is of class $\clas(\phi_1,\dotsc,\phi_n;\nA)$ if $X$ is of class $\clas_K(\phi_1,\dotsc,\phi_n;\nA)$ and of class $\clas_J(\phi_1,\dotsc,\phi_n;\nA)$.
\end{Def}

The two following propositions are straightforward from these definitions.
\begin{Prop}
$X$ is of class $\clas_K(\phi_1,\dotsc,\phi_n;\nA)$ if and only if
\[
 X\subsetc K^{\phi_1,\dotsc,\phi_n}_\infty(\nA).
\]
\end{Prop}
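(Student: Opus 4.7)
The plan is to observe that this is essentially a matter of unwinding definitions, with the bridge being the explicit formula \eqref{eq:Phip} from Remark~\ref{rem:BanL}. Since $p=\infty$ there, the norm $\Phi_\infty^{\phi_1,\dotsc,\phi_n}(K(t,a))$ equals
\[
 \sup_{(t_1,\dotsc,t_n)\in(0,\infty)^n} \frac{K(1,t_1,\dotsc,t_n,a)}{\phi_1(t_1)\dotsb\phi_n(t_n)},
\]
with the usual essential supremum modification. Hence the continuous inclusion $X\subsetc K_\infty^{\phi_1,\dotsc,\phi_n}(\nA)$ amounts to the existence of a constant $C>0$ such that, for every $a\in X$,
\[
 \sup_{(t_1,\dotsc,t_n)\in(0,\infty)^n} \frac{K(1,t_1,\dotsc,t_n,a)}{\phi_1(t_1)\dotsb\phi_n(t_n)} \le C\|a\|_X,
\]
which is verbatim the defining inequality of $\clas_K(\phi_1,\dotsc,\phi_n;\nA)$ in item \eqref{en1:1}.

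For the direct implication, I would start with an $a\in X$ satisfying the class inequality. Since $X$ is an intermediate space with respect to $\nA$, $a\in \Sigma(\nA)$, so $K(1,t_1,\dotsc,t_n,a)$ is well defined. Dividing the defining estimate by $\phi_1(t_1)\dotsb\phi_n(t_n)$ and taking the supremum over $(t_1,\dotsc,t_n)$ yields, by the formula above, $\|a\|_{K_\infty^{\phi_1,\dotsc,\phi_n}(\nA)}\le C\|a\|_X<\infty$; so $a\in K_\infty^{\phi_1,\dotsc,\phi_n}(\nA)$ and the inclusion is continuous. For the reverse implication, I would assume the continuous inclusion holds with constant $C$ and simply read off the pointwise inequality at each fixed $(t_1,\dotsc,t_n)$ from the fact that the supremum defining $\|a\|_{K_\infty^{\phi_1,\dotsc,\phi_n}(\nA)}$ dominates any individual term.

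There is no genuine obstacle here; the only point worth being mindful of is the conventional treatment of the $p=\infty$ case (essential versus pointwise supremum), which is harmless since $K(1,\cdot,a)$ is continuous in its arguments and so the essential supremum and the supremum agree. No hypothesis beyond $\overline{\hypd}$ is required, which matches what is already assumed in the definition of the class.
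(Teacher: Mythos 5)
Your argument is correct and is exactly the definition-unwinding the paper has in mind: it states this proposition (together with its $\clas_J$ companion) as ``straightforward from these definitions,'' and your identification of $\|a\|_{K_\infty^{\phi_1,\dotsc,\phi_n}(\nA)}$ with $\sup_t K(1,t_1,\dotsc,t_n,a)/(\phi_1(t_1)\dotsb\phi_n(t_n))$ via \eqref{eq:Phip} is precisely the intended bridge. Your side remarks --- that $X\subsetc\Sigma(\nA)$ ensures $K(t,a)$ is defined, and that continuity of $K(1,\cdot,a)$ and of the $\phi_j$ makes the essential supremum and the supremum coincide --- are both accurate and harmless.
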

\begin{Prop}
$X$ is of class $\clas_J(\phi_1,\dotsc,\phi_n;\nA)$ if and only if
\[
 K^{\phi_1,\dotsc,\phi_n}_1(\nA)\subsetc X.
\]
\end{Prop}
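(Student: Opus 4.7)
The plan is to prove both implications through the standard pattern, using the pointwise estimate
\[
\|a\|_{J_1^{\phi_1,\dotsc,\phi_n}(\nA)} \le C\,\frac{J(1,t_1,\dotsc,t_n,a)}{\phi_1(t_1)\dotsb\phi_n(t_n)}
\]
for $a\in\Delta(\nA)$, established in the proposition on $J_p^{\phi_1,\dotsc,\phi_n}$ in Section~\ref{sec:interpol}, together with the continuous embedding $J_1^{\phi_1,\dotsc,\phi_n}(\nA)\subsetc K_1^{\phi_1,\dotsc,\phi_n}(\nA)$ from Section~\ref{sec:equiv}.

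For the direction $(\Leftarrow)$, assume $K_1^{\phi_1,\dotsc,\phi_n}(\nA)\subsetc X$. For an arbitrary $a\in\Delta(\nA)$ I would concatenate
\[
\|a\|_X\le C_1\|a\|_{K_1^{\phi_1,\dotsc,\phi_n}(\nA)}\le C_1\|a\|_{J_1^{\phi_1,\dotsc,\phi_n}(\nA)} \le C_2\,\frac{J(1,t_1,\dotsc,t_n,a)}{\phi_1(t_1)\dotsb\phi_n(t_n)},
\]
which is exactly the class $\clas_J$ inequality.

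For the direction $(\Rightarrow)$, assume $X$ is of class $\clas_J$. The natural strategy is: given $a\in K_1^{\phi_1,\dotsc,\phi_n}(\nA)$, produce a representation $a=\int u(t)\,d\omega(t)$ with $u(t)\in\Delta(\nA)$ whose $J$-density is controlled by the $K$-functional of $a$. Applying the pointwise $\clas_J$ bound to each $u(t)\in\Delta(\nA)$ gives
\[
\|u(t)\|_X\le C\,\frac{J(1,t_1,\dotsc,t_n,u(t))}{\phi_1(t_1)\dotsb\phi_n(t_n)},
\]
and integrating in $X$ (which is legitimate because $X\subsetc\Sigma(\nA)$ and the integral already converges in $\Sigma(\nA)$) yields $\|a\|_X\le C'\|a\|_{J_1^{\phi_1,\dotsc,\phi_n}(\nA)}$.

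The main subtlety lies in that very first move in the converse direction: passing from the $K_1$-norm of $a$ to a $\Delta(\nA)$-valued representation with controlled $J$-density is exactly the content of the equivalence theorem (Theorem~\ref{equivalence theorem n qcq}) and, for $n>1$, genuinely requires condition $\F(\nA)$. When this identification is available one concludes $\|a\|_X\le C''\|a\|_{K_1^{\phi_1,\dotsc,\phi_n}(\nA)}$; otherwise the same argument still produces the embedding $J_1^{\phi_1,\dotsc,\phi_n}(\nA)\subsetc X$, which is the natural formulation and matches the classical two-space statement invoked implicitly in the author's remark that the equivalence is straightforward from the definitions.
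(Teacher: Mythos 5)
Your $(\Leftarrow)$ direction is correct: for $a\in\Delta(\nA)$ the chain $\|a\|_X\le C\|a\|_{K_1^{\phi_1,\dotsc,\phi_n}(\nA)}\le C\|a\|_{J_1^{\phi_1,\dotsc,\phi_n}(\nA)}\le C\,J(1,t_1,\dotsc,t_n,a)/\bigl(\phi_1(t_1)\dotsb\phi_n(t_n)\bigr)$ gives exactly the $\clas_J$ inequality (one can also argue directly from $K(1,t_1,\dotsc,t_n,a)\le\min\{1,t_1/s_1,\dotsc,t_n/s_n\}\,J(1,s_1,\dotsc,s_n,a)$ together with $\hypu$, without passing through $J_1$, but your route is equally valid). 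In the $(\Rightarrow)$ direction you have put your finger on the real point: what follows from the definitions alone is $J_1^{\phi_1,\dotsc,\phi_n}(\nA)\subsetc X$ --- apply the pointwise $\clas_J$ bound to a representing density $u(t)\in\Delta(\nA)$ and integrate in $X$ --- whereas the stated conclusion $K_1^{\phi_1,\dotsc,\phi_n}(\nA)\subsetc X$ additionally requires decomposing an arbitrary $a\in K_1^{\phi_1,\dotsc,\phi_n}(\nA)$ into a $\Delta(\nA)$-valued density with $J$ controlled by $K$, i.e.\ the fundamental lemma, which for $n>1$ is precisely condition $\F(\nA)$ and is not automatic.

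This is not a defect of your argument but of the statement: for $n>1$ the proposition is false as written. In the counterexample of Section~\ref{sec:equiv} one has a triple $\nA$ with $\Delta(\nA)=\{0\}$ and $0\neq a\in K_1^{\phi_1,\phi_2}(\nA)$ for suitable $\phi_1,\phi_2$ satisfying $\hypd$; the intermediate space $X=A_1$ (or $X=\{0\}$) is then vacuously of class $\clas_J(\phi_1,\phi_2;\nA)$ since the defining inequality is only required on $\Delta(\nA)$, yet $a\notin X$. The correct ``straightforward'' statement replaces $K_1^{\phi_1,\dotsc,\phi_n}(\nA)$ by $J_1^{\phi_1,\dotsc,\phi_n}(\nA)$ (this is what your argument proves, and it is the version actually used downstream, e.g.\ in Proposition~\ref{pro:gen Rn}); the $K_1$ version holds exactly when $K_1^{\phi_1,\dotsc,\phi_n}(\nA)=J_1^{\phi_1,\dotsc,\phi_n}(\nA)$, for instance under $\F(\nA)$. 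The paper offers no proof beyond declaring the result straightforward from the definitions, so your write-up is more careful than the source; just state explicitly which corrected version you are establishing.
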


We have the following result.
\begin{Prop}\label{pro:gen Rn}
Let $p\in [1,\infty]$, $m\ge n$, $0< \lambda_1+\dotsb +\lambda_m<1$ and $\phi_1^{(j)},\dotsc,\phi_n^{(j)}\in \B$ satisfying $\hypd$ 
for all $j\in\{1,\dotsc,m\}$. Set 
\[
 (\phi_1,\dotsc,\phi_n)= ((\phi_1^{(1)})^{\lambda_1}\dotsb (\phi_1^{(m)})^{\lambda_m},\dotsc,(\phi_n^{(1)})^{\lambda_1}\dotsb (\phi_n^{(m)})^{\lambda_m})
\]
and suppose that the vectors 
\[
\begin{pmatrix}
\frac{t_1D\phi_1^{(j)}(t_1)}{\phi_1^{(j)}(t_1)}\\
\vdots\\
\frac{t_nD\phi_n^{(j)}(t_n)}{\phi_n^{(j)}(t_n)}
\end{pmatrix}
\quad (j\in\{1,\dotsc,m\})
\]
generate $\R^{n}$. Finally, let $\mathbf X = (X_0,\dotsc,X_m)$. 
\begin{enumerate}
\item If $X_j$ is of class $\clas_K(\phi_1^{(j)},\dotsc,\phi_n^{(j)};\nA)$ for all $j\in\{0,\dotsc,m\}$, then, with obvious notations,
\[
 K_p^{\lambda_1,\dotsc,\lambda_m}(\mathbf X)\subsetc K^{\phi_1,\dotsc,\phi_n}_p(\nA).
\]
\item If $X_j$ is of class $\clas_J(\phi_1^{(j)},\dotsc,\phi_n^{(j)};\nA)$ for all $j\in\{0,\dotsc,m\}$, then 
\[
 J^{\phi_1,\dotsc,\phi_n}_p(\nA) \subsetc J_p^{\lambda_1,\dotsc,\lambda_m}(\mathbf X).
\]
\end{enumerate}
\end{Prop}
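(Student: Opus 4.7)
Both inclusions follow the same template: translate the $\clas_K$ (resp.\ $\clas_J$) estimate for each $X_j$ into a pointwise bound on $K^{\nA}$ (resp.\ $J^{\nA}$), and then link $s\in(0,\infty)^m$ and $t\in(0,\infty)^n$ via the substitution $s_j=\phi^{(j)}(t):=\phi_1^{(j)}(t_1)\dotsb\phi_n^{(j)}(t_n)$, reading $\phi_i^{(0)}\equiv 1$ so that the $\clas_K$ condition on $X_0$ reduces to the intermediate-space inclusion.

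For part (1), fix $a\in K_p^{\lambda_1,\dotsc,\lambda_m}(\mathbf{X})$ and, for each $s\in(0,\infty)^m$, choose a decomposition $a=\sum_{j=0}^m x_j(s)$ with $x_j(s)\in X_j$ nearly realising $K^{\mathbf{X}}(1,s,a)$. Summing the $\clas_K$ bounds for the $x_j(s)$ yields
\[
 K^{\nA}(1,t,a)\le C\sum_{j=0}^m \phi^{(j)}(t)\,\|x_j(s)\|_{X_j}.
\]
Setting $s_j:=\phi^{(j)}(t)$ collapses the right-hand side to $CK^{\mathbf{X}}(1,s(t),a)$, and the identity $\prod_i\phi_i(t_i)=\prod_j s_j(t)^{\lambda_j}$ built into the definition of $\phi_i$ gives the pointwise weighted estimate $K^{\nA}(1,t,a)/\prod_i\phi_i(t_i)\le CK^{\mathbf{X}}(1,s(t),a)/\prod_j s_j(t)^{\lambda_j}$. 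Part (2) is dual: from $a=\int u(t)\,d\omega(t)\in J_p^{\phi_1,\dotsc,\phi_n}(\nA)$ with $u(t)\in\Delta(\nA)\subset\Delta(\mathbf{X})$, the $\clas_J$ bound gives $J^{\mathbf{X}}(1,s(t),u(t))\le CJ^{\nA}(1,t,u(t))$, and one has to produce an analogous representation $a=\int v(s)\,d\omega(s)$ in $\Sigma(\mathbf{X})$.

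The delicate point in both parts is to convert the $n$-dimensional integral in $t$ into an $m$-dimensional integral in $s$. In the logarithmic coordinates $u_i=\log t_i$, $v_j=\log s_j$, the map $u\mapsto v(u)$ has Jacobian whose $(j,i)$-entries are precisely the quantities $t_iD\phi_i^{(j)}(t_i)/\phi_i^{(j)}(t_i)$, and by hypothesis its $m$ rows span $\R^n$; since Boyd-regularity controls these entries uniformly above and below, the image is an $n$-dimensional submanifold of $\R^m$ with uniformly nondegenerate geometry. Using the elementary monotonicity $K^{\mathbf{X}}(1,s',a)\asymp K^{\mathbf{X}}(1,s,a)$ (and the analogous statement for $J^{\mathbf{X}}$) for $s'\asymp s$ coordinatewise, one thickens this submanifold into tubes of bounded log-diameter and applies a Fubini/covering argument with uniformly bounded multiplicity to convert the $n$-dimensional integral into the $m$-dimensional one; for part (2) the same tubes, suitably weighted so as to reconstruct $a$, provide the required $\mathbf{X}$-representation. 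The main obstacle is this uniform-multiplicity step: it is precisely where the spanning hypothesis and the quantitative control of the log-derivatives of the $\phi_i^{(j)}$ are needed.
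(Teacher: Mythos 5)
Your treatment of the square case $m=n$ is essentially the paper's argument: the pointwise bound $K^{\nA}(1,t,a)\le C\,K^{\mathbf X}(1,s(t),a)$ obtained by summing the $\clas_K$ estimates over a near-optimal decomposition, the identity $\prod_i\phi_i(t_i)=\prod_j s_j(t)^{\lambda_j}$, and the change of variables $s_j=\phi_1^{(j)}(t_1)\dotsb\phi_n^{(j)}(t_n)$, whose admissibility is exactly the linear independence of the logarithmic-derivative vectors; likewise for part (2), where the paper obtains the representation $v$ from $u$ by the Jacobian reweighting $C(s)v\big(s(t)\big)=u(t)$.

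The gap is in the case $m>n$, which you attack head-on with a tube/covering argument around the $n$-dimensional image of $t\mapsto s(t)$ inside $(0,\infty)^m$. You correctly identify the uniform-multiplicity step as the crux, but you do not prove it, and it is not routine: the spanning hypothesis gives pointwise rank $n$ of the differential in logarithmic coordinates, not global bounded overlap of a family of thickened tubes, and for part (2) you would additionally have to check that the transversally averaged, reweighted $v$ still integrates back to $a$ in $\Sigma(\mathbf X)$ with the right $\Phi_p^{\lambda_1,\dotsc,\lambda_m}$-control. The paper avoids all of this: for $m>n$ it replaces $\nA$ by the $(m+1)$-tuple $\nA'=(A_0,\dotsc,A_n,A_n,\dotsc,A_n)$, applies the square case to $\nA'$ (after factoring the last function of each tuple so that every $X_j$ is of the corresponding class with respect to $\nA'$), and then collapses the repeated coordinates via Proposition~\ref{reduction to lower order}, i.e.\ $K_p^{\psi_1,\dotsc,\psi_m}(\nA')=K_p^{\psi_1,\dotsc,\psi_{n-1},\psi_n\dotsb\psi_m}(\nA)$. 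You should either carry out your covering argument in full (including the injectivity/multiplicity analysis of the map in logarithmic coordinates) or, more economically, adopt this padding reduction, which stays entirely within tools already established earlier in the paper.
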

\begin{proof}
First suppose that $m=n$ and remark that 
\[
\left\{
\begin{array}{l}
s_1=\phi_1^{(1)}(t_1) \dotsb\phi_n^{(1)}(t_n)\\
\hphantom{s_1:}\vdots\\
s_n=\phi_1^{(n)}(t_1)\dotsb \phi_n^{(n)}(t_n)
\end{array}
\right.
\]
is an appropriate change of variable if and only if the vectors 
\[
\begin{pmatrix}
\frac{t_1D\phi_1^{(j)}(t_1)}{\phi_1^{(j)}(t_1)}\\
\vdots\\
\frac{t_nD\phi_n^{(j)}(t_n)}{\phi_n^{(j)}(t_n)}
\end{pmatrix}
\quad (j\in\{1,\dotsc,n\})
\]
are linearly independent.

For $a=a_0+\dotsb+a_n\in \Sigma(\mathbf X)$, we get
\begin{align*}
 K(1,t_1,\dotsc,t_n,a;\nA) &\le \|a_0\|_{A_0}+K(t,a_1;\nA) + \dotsb + K(t,a_n;\nA)\\
&\le C (\|a_0\|_{X_0}+\phi_1^{(1)}(t_1)\dotsb \phi_n^{(1)}(t_n)\|a_1\|_{X_1} \\
&\quad +\dotsb +\phi_1^{(n)}(t_1)\dotsb \phi_n^{(n)}(t_n)\|a_n\|_{X_n}).
\end{align*}
Therefore, using
\[
\left\{
\begin{array}{c}
s_1=\phi_1^{(1)}(t_1)\dotsb \phi_n^{(1)}(t_n)\\
\quad \vdots\\
s_n=\phi_1^{(n)}(t_1)\dotsb \phi_n^{(n)}(t_n)
\end{array}
\right.,
\]
one gets (\ref{en1:1}), since
\begin{align*}
 \|a\|_{K^{\phi_1,\dotsc,\phi_n}_p(\nA)}
 &=\Phi_p^{\phi_1,\dotsc,\phi_n} \big(K(t,a)\big) \\
 &\le C\Phi_p^{\lambda_1,\dotsc,\lambda_m}\big(K(s,a;\mathbf X)\big)\\
&= C\|a\|_{K_p^{\lambda_1,\dotsc,\lambda_m}(\mathbf X)}.
\end{align*}
Let us prove (\ref{en1:2}). Set $a=\int u(t)d\omega(t)\in J^{\phi_1,\dotsc,\phi_n}_p(\nA)$ and define $v$ so that
\[
 C(s)v\big(s(t)\big)=u(t).
\]
We have
\begin{align*}
 \|a\|_{J_p^{\lambda_1,\dotsc,\lambda_m}(\mathbf X)}
 &=\Phi_p^{\lambda_1,\dotsc,\lambda_m}(J(s,v(s);\mathbf X)\big)\\
&= C \Phi_p^{\phi_1,\dotsc,\phi_n}\big(J(s(t),v\big(s(t)\big);\mathbf X)\big)\\
&\le C  \Phi_p^{\phi_1,\dotsc,\phi_n}\big(J(t,u(t);\nA)\big).
\end{align*}

Now suppose $m>n$ and consider the $(m+1)$-tuple
\[
 \nA'=(A_0,\dotsc,A_n,A_n,\dotsc,A_n).
\]
Using Proposition~\ref{reduction to lower order}, we get
\[
 K_p^{\psi_1,\dotsc,\psi_m}(\nA')=K_p^{\psi_1,\dotsc,\psi_{n-1},\psi_n\dotsc \psi_m}(\nA),
\]
bringing the proof to a close.
\end{proof}
By adjusting the reasoning from the preceding demonstration, we derive the following results.
\begin{Thm}\label{thm pour fig1}
Let $p\in [1,\infty]$, $m\ge n$, $0< \lambda_1+\dotsb +\lambda_m< 1$ and $\phi_1^{(j)},\dotsc ,\phi_n^{(j)}\in \B$ satisfying $\hypd$ 
for all $j\in\{1,\dotsb,m\}$. Set
\[
 \big(\phi_1,\dotsc,\phi_n)=\big( (\phi_1^{(1)})^{\lambda_1}\dotsb (\phi_1^{(m)})^{\lambda_m},\dotsc, (\phi_n^{(1)})^{\lambda_1}\dotsb (\phi_n^{(m)})^{\lambda_m})\big)
\]
and suppose that the vectors 
\[
\begin{pmatrix}
\frac{t_1D\phi_1^{(j)}(t_1)}{\phi_1^{(j)}(t_1)}\\
\vdots\\
\frac{t_nD\phi_n^{(j)}(t_n)}{\phi_n^{(j)}(t_n)}
\end{pmatrix}
\quad (j\in\{1,\dotsc,m\})
\]
generate $\R^{n}$. Finally, let $\mathbf X = (X_0,\dotsc,X_m)$. If $X_j$ is of class $\clas(\phi_1^{(j)},\dotsc,\phi_n^{(j)};\nA)$ for all $j\in\{0,\dotsc,m\}$ and if 
\[
 K^{\phi_1,\dotsc,\phi_n}_p(\nA)=J^{\phi_1,\dotsc,\phi_n}_p(\nA)
\]
(this is the case if $\F(\nA)$ is satisfied), then 
\[
 \mathbf X^{\lambda_1,\dotsc,\lambda_m}_p= \mathbf A^{\phi_1,\dotsc,\phi_n}_p.
\]
\end{Thm}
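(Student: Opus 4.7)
The plan is to combine Proposition~\ref{pro:gen Rn} with the hypothesised $K$-$J$ equivalence by means of a two-sided sandwich, exactly as in the classical reiteration arguments but now in the multi-space setting.

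First, since each $X_j$ is assumed to be of class $\clas(\phi_1^{(j)},\dotsc,\phi_n^{(j)};\nA)$, it is simultaneously of class $\clas_K$ and of class $\clas_J$. Therefore, both conclusions of Proposition~\ref{pro:gen Rn} apply and yield the continuous embeddings
\[
 J^{\phi_1,\dotsc,\phi_n}_p(\nA) \subsetc J_p^{\lambda_1,\dotsc,\lambda_m}(\mathbf X)
 \quad\text{and}\quad
 K_p^{\lambda_1,\dotsc,\lambda_m}(\mathbf X) \subsetc K^{\phi_1,\dotsc,\phi_n}_p(\nA).
\]
The genericity condition on the logarithmic derivative vectors, together with $0<\lambda_1+\dotsb+\lambda_m<1$, ensures that these applications of Proposition~\ref{pro:gen Rn} are legitimate and that the resulting spaces on the $\mathbf X$ side are well defined.

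Next, I would invoke the general inclusion $J_p^{\lambda_1,\dotsc,\lambda_m}(\mathbf X) \subsetc K_p^{\lambda_1,\dotsc,\lambda_m}(\mathbf X)$ (a consequence of the first result in Section~\ref{sec:equiv}, applied to the tuple $\mathbf X$ with the scalar parameters $\lambda_1,\dotsc,\lambda_m$). Chaining the three inclusions produces
\[
 J^{\phi_1,\dotsc,\phi_n}_p(\nA) \subsetc J_p^{\lambda_1,\dotsc,\lambda_m}(\mathbf X) \subsetc K_p^{\lambda_1,\dotsc,\lambda_m}(\mathbf X) \subsetc K^{\phi_1,\dotsc,\phi_n}_p(\nA).
\]
Under the assumption that the two outer spaces coincide, the whole chain collapses to a single space. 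In particular $J_p^{\lambda_1,\dotsc,\lambda_m}(\mathbf X)=K_p^{\lambda_1,\dotsc,\lambda_m}(\mathbf X)$, so the space $\mathbf X^{\lambda_1,\dotsc,\lambda_m}_p$ is well defined and equal to $\mathbf A^{\phi_1,\dotsc,\phi_n}_p$ as sets; the continuity of all inclusions in the chain then promotes this set-theoretic equality to an equivalence of norms.

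The only genuinely delicate point is verifying that the hypotheses of Proposition~\ref{pro:gen Rn} are met for the $J$-part as well as the $K$-part: namely that the change of variables built from the vectors $(t_i D\phi_i^{(j)}/\phi_i^{(j)})_i$ is valid in both directions, and that the hypothesis $\hypd$ on each $(\phi_1^{(j)},\dotsc,\phi_n^{(j)})$ transfers to the composite functions $\phi_1,\dotsc,\phi_n$ so that the spaces $J^{\phi_1,\dotsc,\phi_n}_p(\nA)$ and $K^{\phi_1,\dotsc,\phi_n}_p(\nA)$ are interpolation spaces in the sense needed. Everything else is a direct assembly of already-established embeddings, so apart from this bookkeeping step the proof is essentially a diagram chase.
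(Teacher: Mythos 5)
Your proof is correct and follows exactly the route the paper intends: the paper gives no explicit argument beyond ``adjusting the reasoning from the preceding demonstration,'' which amounts precisely to your sandwich $J^{\phi_1,\dotsc,\phi_n}_p(\nA) \subsetc J_p^{\lambda_1,\dotsc,\lambda_m}(\mathbf X) \subsetc K_p^{\lambda_1,\dotsc,\lambda_m}(\mathbf X) \subsetc K^{\phi_1,\dotsc,\phi_n}_p(\nA)$ collapsed by the hypothesis $K^{\phi_1,\dotsc,\phi_n}_p(\nA)=J^{\phi_1,\dotsc,\phi_n}_p(\nA)$. The only point worth recording is that the middle inclusion needs $\hypd$ for the power weights $t^{\lambda_j}$, i.e.\ each $\lambda_j\in(0,1)$, which is the implicit convention behind $0<\lambda_1+\dotsb+\lambda_m<1$.
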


\begin{figure}
\begin{center}
\begin{tikzpicture}[scale=0.6]

\coordinate (A0) at (0,0);
\coordinate (A1) at (12,0);
\coordinate (A2) at (6,10);

\coordinate (X0) at (4,0);
\coordinate (X1) at (10.6,1.2);
\coordinate (X2) at (7,6);
\coordinate (X3) at (3.6,4);
\coordinate (G) at (barycentric cs:X0=1,X1=1,X2=1,X3=1);

\draw (A0) -- (A1) -- (A2) -- cycle;

\draw (X0) -- (X1) -- (X2) -- (X3) -- cycle;

\node[below] at (A0) {$A_0$};
\node[below] at (A1) {$A_1$};
\node[above] at (A2) {$A_2$};
\node[below] at (X0) {$X_0$};
\node[below] at (X1) {$X_1$};
\node[right] at (X2) {$X_2$};
\node[above] at (X3) {$X_3$};

\node at (G)  {$\times$};
\node[below] at (G) {$\mathbf{X}_p^{\lambda_1,\lambda_2,\lambda_3}=\nA_p^{\phi_1,\phi_2}$};

\end{tikzpicture}
\end{center}
\caption{Illustration inspired of \cite[Figure 9.1]{Spa:74} of Theorem \ref{thm pour fig1} with $m=3$ and $n=2$.}
\end{figure}
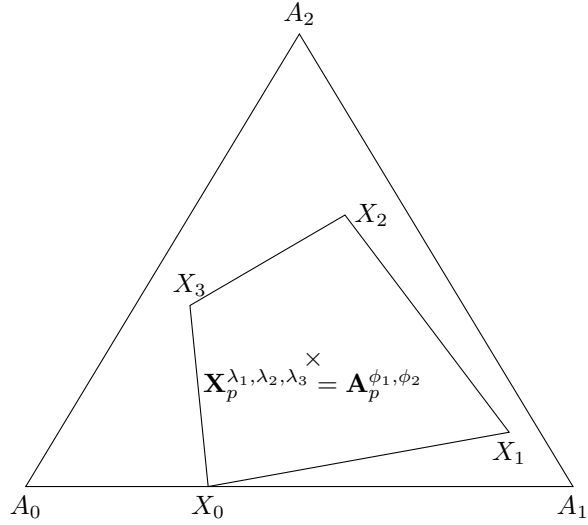

\begin{Cor}
Let $m\ge n$, $p,p_0,\dotsc,p_m\in [1,\infty]$, $0<\lambda_1+\dotsb+\lambda_m<1$ and $\phi_1^{(j)},\dotsc,\phi_n^{(j)}\in \B$ satisfying $\hypd$ 
for all $j\in\{1,\dotsc,m\}$. Set 
\[
 (\phi_1,\dotsc,\phi_n)
 =\big( (\phi_1^{(1)})^{\lambda_1}\dotsb (\phi_1^{(m)})^{\lambda_m}),\dotsc, (\phi_n^{(1)})^{\lambda_1}\dotsb (\phi_n^{(m)})^{\lambda_m})\big)
\]
and suppose that the vectors 
\[
\begin{pmatrix}
\frac{t_1D\phi_1^{(j)}(t_1)}{\phi_1^{(j)}(t_1)}\\
\vdots\\
\frac{t_nD\phi_n^{(j)}(t_n)}{\phi_n^{(j)}(t_n)}
\end{pmatrix}
,\quad j\in\{1,\dotsc,m\}
\]
generate $\R^{n}$. If $K_{p_j}^{\phi_1^{(j)},\dotsc,\phi_n^{(j)}}(\nA)=J_{p_j}^{\phi_1^{(j)},\dotsc,\phi_n^{(j)}}(\mathbf A)$ for $j\in\{0,\dotsc,m\}$ and if 
\[
 K^{\phi_1,\dotsc,\phi_n}_p(\nA)= J^{\phi_1,\dotsc,\phi_n}_p(\nA)
\]
(this is the case if $\F(\nA)$ is satisfied), then 
\[
 (\nA_{p_0}^{\phi_1^{(0)},\dotsc,\phi_n^{(0)}},\dotsc,\nA_{p_m}^{\phi_1^{(m)},\dotsc,\phi_n^{(m)}})^{\lambda_1,\dotsc,\lambda_m}_p=\nA^{\phi_1,\dotsc,\phi_n}_p.
\]
\end{Cor}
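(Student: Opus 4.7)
The plan is to derive this corollary directly from Theorem~\ref{thm pour fig1} by specialising its abstract intermediate spaces to $K$/$J$-interpolation spaces. Concretely, I would set $X_j := \nA_{p_j}^{\phi_1^{(j)},\dotsc,\phi_n^{(j)}}$ for each $j\in\{0,\dotsc,m\}$ (which is well defined since the hypothesis gives $K_{p_j}^{\phi_1^{(j)},\dotsc,\phi_n^{(j)}}(\nA)=J_{p_j}^{\phi_1^{(j)},\dotsc,\phi_n^{(j)}}(\nA)$) and consider the tuple $\mathbf X = (X_0,\dotsc,X_m)$. The combinatorial hypotheses required by Theorem~\ref{thm pour fig1}, namely the range of $\lambda_1+\dotsb+\lambda_m$, the generation of $\R^n$ by the logarithmic-derivative vectors, and the equality $K^{\phi_1,\dotsc,\phi_n}_p(\nA)=J^{\phi_1,\dotsc,\phi_n}_p(\nA)$, are all carried over verbatim from the corollary's assumptions.

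The only remaining verification is that each $X_j$ belongs to the class $\clas(\phi_1^{(j)},\dotsc,\phi_n^{(j)};\nA)$. For the $\clas_K$ half, I would invoke the ``moreover'' clause of Proposition~\ref{Prop Kfct}, which supplies exactly the inequality
\[
K(1,t_1,\dotsc,t_n,a)\le C \phi_1^{(j)}(t_1)\dotsb \phi_n^{(j)}(t_n)\|a\|_{K_{p_j}^{\phi_1^{(j)},\dotsc,\phi_n^{(j)}}(\nA)}
\]
defining $\clas_K(\phi_1^{(j)},\dotsc,\phi_n^{(j)};\nA)$. For the $\clas_J$ half, I would appeal to the analogous clause in the proposition characterising $J_p^{\phi_1,\dotsc,\phi_n}$, which gives
\[
\|a\|_{J_{p_j}^{\phi_1^{(j)},\dotsc,\phi_n^{(j)}}(\nA)}\le C \frac{1}{\phi_1^{(j)}(t_1)\dotsb\phi_n^{(j)}(t_n)}J(1,t_1,\dotsc,t_n,a)
\]
for every $a\in\Delta(\nA)$. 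Because $K_{p_j}^{\phi_1^{(j)},\dotsc,\phi_n^{(j)}}(\nA)$ and $J_{p_j}^{\phi_1^{(j)},\dotsc,\phi_n^{(j)}}(\nA)$ coincide as $X_j$, both inequalities apply to the same norm and certify that $X_j \in \clas(\phi_1^{(j)},\dotsc,\phi_n^{(j)};\nA)$.

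Once this class-membership is established for each $j$, Theorem~\ref{thm pour fig1} applies to $\mathbf X$ and yields $\mathbf X^{\lambda_1,\dotsc,\lambda_m}_p = \nA^{\phi_1,\dotsc,\phi_n}_p$, which, unpacking the definition of $X_j$, is precisely the stated identity. I do not expect a genuine obstacle: the corollary is essentially a repackaging of Theorem~\ref{thm pour fig1} in which the abstract spaces of class $\clas$ are specialised to concrete $K$/$J$-spaces, the specialisation being immediate from the sharp two-sided bounds already noted in the $K$- and $J$-propositions.
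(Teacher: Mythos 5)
Your proposal is correct and is exactly the route the paper intends: the corollary is stated without proof as an immediate consequence of Theorem~\ref{thm pour fig1}, obtained by taking $X_j=\nA_{p_j}^{\phi_1^{(j)},\dotsc,\phi_n^{(j)}}$ and certifying membership in $\clas(\phi_1^{(j)},\dotsc,\phi_n^{(j)};\nA)$ via the ``moreover'' inequalities of the $K$- and $J$-functor propositions. Your write-up simply makes explicit what the paper leaves implicit.
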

\begin{Prop}
Let $m\ge n$, $\phi_1^{(j)},\dotsc,\phi_n^{(j)}\in \B$ be such that $\hypd$ is satisfied 
for all $j\in\{1,\dotsc,m\}$. Suppose that the vectors 
\[
\begin{pmatrix}
\frac{t_1D\phi_1^{(j)}(t_1)}{\phi_1^{(j)}(t_1)}\\
\vdots\\
\frac{t_nD\phi_n^{(j)}(t_n)}{\phi_n^{(j)}(t_n)}
\end{pmatrix}
,\quad j\in\{1,\dotsc,m\}
\]
generate $\R^n$. Finally, let $\mathbf X = (X_0,\dotsc,X_m)$ such that $X_j$ is of class $\clas(\phi_1^{(j)},\dotsc,\phi_n^{(j)};\nA)$ for all $j\in\{0,\dotsc,m\}$. If $\F(\nA)$ is satisfied, then $\F(\mathbf X)$ is also satisfied.
\end{Prop}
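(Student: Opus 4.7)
The plan is to lift the fundamental-lemma decomposition provided by $\F(\nA)$ to $\mathbf X$ via the change of variables $s_j(t)=\phi_1^{(j)}(t_1)\dotsb\phi_n^{(j)}(t_n)$ ($j=1,\dotsc,m$), as in the proofs of Proposition~\ref{pro:gen Rn} and Theorem~\ref{thm pour fig1}. Given $b\in\sigma(\mathbf X)$, I would first apply the $\clas_K$ bound to each term of any decomposition $b=\sum_j b_j$ in $\Sigma(\mathbf X)$ to get
\[
 K^{\nA}(1,t_1,\dotsc,t_n,b)\le C\sum_{j}\phi_1^{(j)}(t_1)\dotsb\phi_n^{(j)}(t_n)\|b_j\|_{X_j},
\]
which, after taking the infimum, reads $K^{\nA}(t,b)\le C\,K^{\mathbf X}(s(t),b)$. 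Combined with the change-of-variables formula (non-degenerate by the hypothesis that the columns $\bigl(t_k D\phi_k^{(j)}/\phi_k^{(j)}\bigr)_k$ generate $\R^n$), this transfers the integrability condition defining $\sigma(\mathbf X)$ into the one defining $\sigma(\nA)$, so that $b\in\sigma(\nA)$.

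Applying $\F(\nA)$ produces $u:\mathbb{P}^n_+\to\Delta(\nA)$ with $b=\int u(t)\,d\omega(t)$ in $\Sigma(\nA)$ and $J^{\nA}(t,u(t))\le C(\nA)\,K^{\nA}(t,b)$. Evaluating $\clas_J$ at $t_1=\dotsb=t_n=1$ and using $\phi_k^{(j)}(1)=1$ gives $\Delta(\nA)\hookrightarrow X_j$ for every $j$, hence $u(t)\in\Delta(\mathbf X)$ automatically. In the square case $m=n$, the map $\psi:t\mapsto s(t)$ is a diffeomorphism of $(0,\infty)^n$ and I would set $v(s)=u(\psi^{-1}(s))\,|\mathrm{Jac}\,\psi^{-1}(s)|$, so that $b=\int v(s)\,d\omega(s)$ in $\Sigma(\mathbf X)$. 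The $\clas_J$ inequality, together with $J^{\nA}(t,u(t))\le C K^{\nA}(t,b)$ and the first paragraph's bound $K^{\nA}(t,b)\le C K^{\mathbf X}(s(t),b)$, then yields $s_j\|v(s)\|_{X_j}\le C\,K^{\mathbf X}(s,b)$ for every $j$, hence $J^{\mathbf X}(s,v(s))\le C\,K^{\mathbf X}(s,b)$, which is $\F(\mathbf X)$.

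For $m>n$, I would reduce to the square case by the padding trick already used in the proof of Proposition~\ref{pro:gen Rn}: introduce $\nA'=(A_0,\dotsc,A_n,A_n,\dotsc,A_n)$, invoke the preceding proposition (just before Section~\ref{sec:powth}) to pass from $\F(\nA)$ to $\F(\nA')$, and rerun the previous argument with $\nA'$ in place of $\nA$. The main technical obstacle is controlling the logarithmic Jacobian $\det\bigl(t_k\partial s_j/\partial t_k/s_j\bigr)$ uniformly, since pointwise invertibility is not sufficient to absorb it into the final constant. I would address this by first replacing each $\phi_k^{(j)}$ with an equivalent Boyd-regular representative (allowed by $\hypd$, see Section~\ref{sec:Boyd}) so that the logarithmic derivatives are uniformly bounded, and by reading the generating-vectors hypothesis as uniform non-degeneracy. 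A minor issue is the convention for $X_0$, for which no $\phi^{(0)}$ is specified in the statement; I would adopt the natural reading $\phi_k^{(0)}\equiv 1$, compatible with $X_0$ being an intermediate space.
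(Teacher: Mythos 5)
The paper gives no proof of this proposition: it sits in the block introduced by ``By adjusting the reasoning from the preceding demonstration, we derive the following results'', so the intended argument is exactly the one you carry out, namely Sparr's scheme for transferring $\F$ combined with the change of variables $s_j(t)=\phi_1^{(j)}(t_1)\dotsb\phi_n^{(j)}(t_n)$ from the proof of Proposition~\ref{pro:gen Rn}. Your overall structure --- pull $b$ back into $\sigma(\nA)$, apply $\F(\nA)$, push the resulting $u$ forward to $v$ with a Jacobian factor, bound $J^{\mathbf X}(s,v(s))$ via the $\clas_J$ conditions, and pad with copies of $A_n$ when $m>n$ --- is sound, and your remarks on passing to Boyd-regular representatives and on the convention $\phi_k^{(0)}\equiv 1$ are both appropriate and consistent with the paper's other statements.

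The one step you assert rather than prove is $\sigma(\mathbf X)\subset\sigma(\nA)$. The inequality $K^{\nA}(1,t,b)\le C\,K^{\mathbf X}(1,s(t),b)$ is fine, but the integral defining $\sigma$ carries the weight $1/\max t$, so after the change of variables you still need the pointwise comparison $\max\{1,s_1(t),\dotsc,s_m(t)\}\le C\max\{1,t_1,\dotsc,t_n\}$. This does not follow from $\hypd$ alone: having each $\ub(\phi_k^{(j)})<1$ does not prevent $\prod_k\phi_k^{(j)}(t_k)$ from growing like $\max\{1,t\}^{\alpha}$ with $\alpha>1$ along the diagonal, and since $K^{\mathbf X}(s,b)/\max s$ may be only barely integrable, the discrepancy in the weight cannot be absorbed. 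The gap is repairable from hypotheses you already have: fix a nonzero $a\in\Delta(\nA)$ and apply the $\clas_J$ condition for $X_j$ to it; since $J^{\nA}(1,t,a)\le\max\{1,t_1,\dotsc,t_n\}\,\|a\|_{\Delta(\nA)}$, this gives $\prod_k\phi_k^{(j)}(t_k)\le C\max\{1,t\}\,\|a\|_{\Delta(\nA)}/\|a\|_{X_j}$, which is exactly the missing bound (the degenerate case $\Delta(\nA)=\{0\}$ being vacuous). I would also record explicitly that $\int v(s)\,d\omega(s)$ converges absolutely in $\Sigma(\mathbf X)$ --- so that the identity $b=\int v\,d\omega$ holds in $\Sigma(\mathbf X)$ and not merely in $\Sigma(\nA)$ --- which follows from $\|v(s)\|_{\Sigma(\mathbf X)}\le J^{\mathbf X}(s,v(s))/\max s\le C\,K^{\mathbf X}(s,b)/\max s$ together with $b\in\sigma(\mathbf X)$.
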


Additional stability results can be achieved by imposing restrictions on $\lambda_1,\dotsc,\lambda_m$, $p$, $p_0,\dotsc,p_m$ instead of $\phi_1^{(j)},\dotsc,\phi_n^{(j)}$. The ensuing result can be inferred from \cite{Spa:74}:
\begin{Thm}
Let $p,p_0,\dotsc,p_m\in [1,\infty]$, $0< \lambda_1+\dotsb +\lambda_m< 1$ and $\phi_1^{(j)},\dotsc,\phi_n^{(j)}\in \B$ satisfying $\hypd$ 
for all $j\in\{1,\dotsc,m\}$. Set 
\[
 (\phi_1,\dotsc,\phi_n)= ((\phi_1^{(1)})^{\lambda_1}\dotsb (\phi_1^{(m)})^{\lambda_m}),\dotsc ,(\phi_n^{(1)})^{\lambda_1}\dotsb (\phi_n^{(m)})^{\lambda_m}))
\]
and suppose that 
\[
 \frac{1}{p}= \sum_{j=0}^m \frac{\lambda_j}{p_j}.
\]
If $K_{p_j}^{\phi_1^{(j)},\dotsc,\phi_n^{(j)}}(\nA)=J_{p_j}^{\phi_1^{(j)},\dotsc,\phi_n^{(j)}}(\nA)$ for $j\in\{0,\dotsc,m\}$ and if 
\[
 K^{\phi_1,\dotsc,\phi_n}_p(\nA)=J^{\phi_1,\dotsc,\phi_n}_p(\nA)
\]
(this is the case if $\F(\nA)$ is satisfied), then 
\[
 (\nA_{p_0}^{\phi_1^{(0)},\dotsc,\phi_n^{(0)}},\dotsc,\nA_{p_m}^{\phi_1^{(m)},\dotsc,\phi_n^{(m)}})^{\lambda_1,\dotsc,\lambda_m}_p=\mathbf A^{\phi_1,\dotsc,\phi_n}_p.
\]
\end{Thm}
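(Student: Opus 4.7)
The plan is to adapt the Holmstedt-style reiteration argument of \cite{Spa:74} to the present multi-space function-parameter setting. Set $X_j=\nA_{p_j}^{\phi_1^{(j)},\dotsc,\phi_n^{(j)}}$ and, for brevity, $\phi^{(j)}(t)=\phi_1^{(j)}(t_1)\dotsb \phi_n^{(j)}(t_n)$. I will establish the two inclusions separately, using the $K$-description for $\mathbf{X}^{\lambda_1,\dotsc,\lambda_m}_p\subsetc \nA^{\phi_1,\dotsc,\phi_n}_p$ and the $J$-description for the reverse inclusion; in both directions the exponent relation $\tfrac{1}{p}=\sum_{j=0}^m \tfrac{\lambda_j}{p_j}$ will enter through H\"older's inequality, and it will play precisely the role that the geometric generation hypothesis played in Theorem~\ref{thm pour fig1}.

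For the $K$-side, Proposition~\ref{Prop Kfct} immediately says that each $X_j$ is of class $\clas_K(\phi_1^{(j)},\dotsc,\phi_n^{(j)};\nA)$; under the standing $K=J$ hypothesis for each index $j$ it is in fact of class $\clas(\phi_1^{(j)},\dotsc,\phi_n^{(j)};\nA)$. Given $a\in K^{\lambda_1,\dotsc,\lambda_m}_p(\mathbf{X})$, I would pick near-optimal $K$-decompositions $a=a_0(s)+\dotsb+a_m(s)$ over $s\in \mathbb{P}^m_+$ and use subadditivity together with the class $\clas_K$ property to obtain
\[
 K^{\nA}(1,t,a)\le \sum_{j=0}^m K^{\nA}(1,t,a_j(s))\le C\sum_{j=0}^m \phi^{(j)}(t)\,\|a_j(s)\|_{X_j}.
\]
Dividing by $\phi_1(t_1)\dotsb \phi_n(t_n)=\prod_j \phi^{(j)}(t)^{\lambda_j}$, taking $L_*^p$-norms in $t$ and coupling $s$ to $t$ via a suitable power law, a H\"older step with exponents $p_j/(\lambda_j p)$, whose reciprocals sum to $1$ by hypothesis, should deliver $\|a\|_{K^{\phi_1,\dotsc,\phi_n}_p(\nA)}\le C\|a\|_{K^{\lambda_1,\dotsc,\lambda_m}_p(\mathbf{X})}$ after passing to the infimum over decompositions.

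The reverse inclusion is symmetric in spirit. Since $K=J$ holds for $(\phi_1,\dotsc,\phi_n;p)$, any $a\in \nA^{\phi_1,\dotsc,\phi_n}_p$ admits a $J$-representation $a=\int u(t)\, d\omega(t)$ with $u(t)\in\Delta(\nA)\subset \Delta(\mathbf{X})$. The class $\clas_J$ property of each $X_j$ gives
\[
 \|u(t)\|_{X_j}\le C\,\phi^{(j)}(t)^{-1}\, J^{\nA}(1,t,u(t)),
\]
so $u$ yields a valid $\Delta(\mathbf{X})$-valued representation of $a$. A matched change of variable $s=s(t)$ turning the $\phi^{(j)}$-weights into pure $s_j$-weights, followed by a second H\"older step based on the same exponent identity, should then convert the integral defining $\|a\|_{J^{\lambda_1,\dotsc,\lambda_m}_p(\mathbf{X})}$ into a constant multiple of $\|a\|_{J^{\phi_1,\dotsc,\phi_n}_p(\nA)}$.

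The main obstacle is that, in contrast with Theorem~\ref{thm pour fig1}, there is no genuine bijective change of variable between $\mathbb{P}^n_+$ and $\mathbb{P}^m_+$ once the geometric generation condition is dropped, especially when $m>n$. Instead, one has to perform a multilinear H\"older factorisation of the integrand along the $m+1$ ``directions'' provided by $\mathbf{X}$, and it is precisely the relation $\tfrac{1}{p}=\sum_{j=0}^m \tfrac{\lambda_j}{p_j}$ that makes those H\"older exponents admissible. Once this factorisation is in place, the remainder of the argument reduces to the routine bookkeeping already carried out in \cite{Spa:74}, now framed within the Boyd-function formalism of the earlier sections.
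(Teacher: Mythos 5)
Your setup is sound --- identifying via Proposition~\ref{Prop Kfct} and its $J$-analogue that each $X_j=\nA_{p_j}^{\phi_1^{(j)},\dotsc,\phi_n^{(j)}}$ is of class $\clas(\phi_1^{(j)},\dotsc,\phi_n^{(j)};\nA)$, and splitting the proof into two inclusions --- but the step that actually proves the theorem is missing, and the way you describe it cannot work. The bound
\[
 K^{\nA}(1,t,a)\le C\sum_{j=0}^m \phi_1^{(j)}(t_1)\dotsb\phi_n^{(j)}(t_n)\,\|a_j(s)\|_{X_j}
\]
is \emph{additive} in $j$, whereas a H\"older step with exponents $p_j/(\lambda_j p)$ requires the integrand, after division by $\prod_i\phi_i(t_i)=\prod_j\bigl(\prod_i\phi_i^{(j)}(t_i)\bigr)^{\lambda_j}$, to factor \emph{multiplicatively} as $\prod_j H_j(t)^{\lambda_j}$ with each $H_j^{p_j}$ integrable; you never produce such a factorisation, and the inequality between the sum and the geometric mean goes in the wrong direction. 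Worse, the fallback of coupling $s$ to $t$ through $s_j(t)=\prod_i\phi_i^{(j)}(t_i)/\prod_i\phi_i^{(0)}(t_i)$ degenerates exactly in the cases this theorem is designed to cover: if all the $\phi^{(j)}$ coincide, then $s(t)\equiv(1,\dotsc,1)$ and your pointwise estimate collapses to $K^{\nA}(1,t,a)\le C\prod_i\phi_i(t_i)\,\|a\|_{\Sigma(\mathbf X)}$, which carries only the $\clas_K$-type ($L^\infty$) information and gives no $L^p$ control; the weight $\prod_i\phi_i^{(j)}(t_i)/\prod_i\phi_i(t_i)$ is then identically $1$ and not integrable. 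The same objection applies to the ``matched change of variable'' in your $J$-direction. You correctly flag the absence of a bijective change of variable as the main obstacle, but then declare the rest ``routine bookkeeping'', which is backwards: that obstacle \emph{is} the theorem.

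For the record, the paper does not prove this statement either; it only asserts that the result ``can be inferred from [Spa:74]''. Sparr's argument for the corresponding result is of a genuinely different nature: it rests on Holmstedt-type two-sided estimates for the $K$-functional of the tuple $\mathbf X$ (equivalently, on realising the spaces $K_{p_j}^{\phi_1^{(j)},\dotsc,\phi_n^{(j)}}(\nA)$ inside weighted Lebesgue lattices over a \emph{common} weight and invoking the identity for interpolation of such lattices under $1/p=\sum_{j}\lambda_j/p_j$). That machinery is precisely what your sketch lacks, and supplying it amounts to reproving Holmstedt's formula in the multi-space, function-parameter setting --- the real content of the proof.
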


\section{Interpolation of generalized Sobolev spaces}\label{sec:sobolev}
Let us now turn our attention to the interpolation of multiple generalized Sobolev spaces. First, let us provide a brief introduction to generalized Sobolev and Besov spaces (\cite{Merucci:84,Cobos:88}).
\begin{Def}
Let $q\in[1,\infty]$ and $\phi\in\B$; if $X$ is a Banach space, the space $l^q_\phi(X)$ consists of all sequences $(a_j)_j$ of $X$ such that 
\[
 (\phi(2^j)\|a_j\|_X)_j\in l^q.
\]
This space is equipped with the norm
\[
 \|(a_j)_j\|_{l^q_\phi(X)}
 = \big\|(\phi(2^j)\|a_j \|_X)_j\big\|_{l^q}.
\]
\begin{itemize}
\item If $\phi(t) =t^s$ ($s\in\R$), we set $l^q_s(X) =l^q_\phi(X)$;
\item If $X=\C$, then we set $l^q_\phi = l^q_\phi(X)$.
\end{itemize}
\end{Def}
Let $\Sc'=\Sc'(\R^d)$ be the space of tempered distributions and $\F$ the Fourier transform on  $\Sc'$.
\begin{Def}Let $\J$ be the Bessel operator of order $s$, that is
\[
 \J^s f=\F^{-1}\big((1+|\cdot|^2)^{s/2}\F f\big),
\]
for $f\in\Sc'$ and $s\in\R$. Given $s\in\R$ and $p\in[1,\infty]$, the fractional Sobolev space $H^s_p$ is defined by
\[
 H^s_p= \{f\in \Sc' : \| \J^s f \|_{L^p}<\infty \}.
\]
\end{Def}
We denote by $\B''$ the set of functions $\phi \in\B$ which are $C^{\infty}$ on $[1,\infty)$ and such that 
\[
 t^m|D^m \phi (t)|\le C_m \phi(t),
\]
for all $t\in[1,\infty)$ and all $m\in\N$.

\begin{Ex}
Given $\theta,\gamma\in \R$, let us set
\[
 \phi(t)=t^\theta(1+|\log t|)^\gamma,
\]
for $t>0$; it can be readily verified that $\phi \in\B''$.
\end{Ex}

\begin{Def}
Given $\phi\in \B''$, the generalized Bessel operator $\J^\phi$ is defined by
\[
 \J^\phi f
 = \F^{-1}(\phi(\sqrt{1+|\cdot|^2})\F f),
\]
for $f\in\Sc'$.
\end{Def}

\begin{Rmk}
The previous definition makes sense, since $\phi\in \B''$ and $f\in\Sc'$ imply $\phi(\sqrt{1+|\cdot|^2})\F f\in \Sc'$. Moreover, it is clear that $\J^\phi$ is a linear bijective operator on $\Sc'$ such that $(\J^\phi)^{-1}=\J^{1/\phi}$ and $\J^\phi(\Sc)=\Sc$.
\end{Rmk}

\begin{Def}
Given $\phi\in\B''$ and $p\in[1,\infty]$, the generalized (fractional) Sobolev space $H^\phi_p$ is defined by
\[
 H^\phi_p= \{f\in \Sc' : \|\J^{\phi} f \|_{L^p}<\infty \}.
\]
\end{Def}
\begin{Rmk}
For $\phi(t)=t^s$, it is self-evident that $H^\phi_p=H^s_p$.
\end{Rmk}

\begin{Def}
For $p\in[1,\infty]$, the space of \emph{Fourier multipliers} on $L^p$ is defined by
\[
 M^p= \{f\in\Sc': \forall g\in\Sc,\ \|(\F^{-1} f)* g \|_{L^p}\le C\|g\|_{L^p} \}.
\]
This space is equipped with the norm
\[
 \|f\|_{M^p}
 = \sup \{ \|(\F^{-1} f)* g \|_{L^p}:
 g\in\Sc \text{ and } \|g\|_{L^p}\le 1 \}.
\]
\end{Def}

Let's recall the following classical result \cite{Bergh:76}:
\begin{Lemma}\label{lemme fourier multipliers}
Let $N\in\N$, if $f\in\Sc'$ is such that $f\in L^2$ and $D^\alpha f\in L^2$ for $|\alpha|=N>d/2$, then $f\in M^p$ and 
\[
 \|f\|_{M^p}
 \le C \|f\|_{L^2}^{1-\frac{d}{2N}} (\sup_{|\alpha|=N}\|D^\alpha f\|_{L^2} )^{\frac{d}{2N}},
\]
for some constant $C>0$.
\begin{Def}
Let $\varphi$ be a function in $\Sc$ with support included in the set $\left\{x\in \R^d : \frac{1}{2}\le |x|\le 2\right\}$, such that $\varphi(x)>0$ if $\frac{1}{2}\le |x|\le 2$ and 
\[
 \sum_{j\in\Z}\varphi(2^{-j} x)=1,
\]
for $x\not= 0$. Define functions $\psi_0$ and $\Phi_j$ ($j\in\Z$) in $\Sc$ such that
\[
 \F\Phi_j(x)=\varphi(2^{-j}x)
 \quad\text{ and }\quad\F\psi_0(x)=1-\sum_{j=1}^{\infty}\varphi(2^{-j} x)
\]
and set $\varphi_0=\psi_0$, $\varphi_j=\Phi_j$ for all $j\in\N_0$. We call $(\varphi_j)_j$ a system of test functions.
\end{Def}
\end{Lemma}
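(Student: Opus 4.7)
The plan is to reduce the Fourier multiplier norm to an $L^1$ norm via Young's inequality, then estimate that $L^1$ norm by a dyadic split in physical space, balancing a Plancherel bound against a weighted bound coming from the derivatives.

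More concretely, set $h=\F^{-1}f$. By Young's inequality, for every $g\in\Sc$ we have $\|h*g\|_{L^p}\le \|h\|_{L^1}\|g\|_{L^p}$, so it suffices to establish the displayed estimate with $\|f\|_{M^p}$ replaced by $\|h\|_{L^1}$. For any $R>0$ I would split
\[
 \|h\|_{L^1}=\int_{|x|\le R}|h(x)|\,dx+\int_{|x|>R}|h(x)|\,dx.
\]
The first piece is handled by Cauchy--Schwarz and Plancherel: $\int_{|x|\le R}|h|\le |B_R|^{1/2}\|h\|_{L^2}\le C R^{d/2}\|f\|_{L^2}$.

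For the second piece I would insert the weight $|x|^{-N}$:
\[
 \int_{|x|>R}|h(x)|\,dx\le \Big(\int_{|x|>R}|x|^{-2N}dx\Big)^{1/2}\big\||x|^N h\big\|_{L^2}.
\]
The first factor is $CR^{d/2-N}$, and this is precisely where the hypothesis $N>d/2$ is used, both to make the radial integral converge and to guarantee the exponent $d/2-N$ is negative (so one actually gains as $R$ grows). For the second factor I would use the Fourier identity $\F^{-1}(D^\alpha f)(x)=(ix)^\alpha h(x)$ together with $|x|^{2N}\le C\sum_{|\alpha|=N}|x^\alpha|^2$, yielding $\||x|^N h\|_{L^2}\le C \sup_{|\alpha|=N}\|D^\alpha f\|_{L^2}$ via Plancherel. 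Combining,
\[
 \|h\|_{L^1}\le C\bigl(R^{d/2}\|f\|_{L^2}+R^{d/2-N}\sup_{|\alpha|=N}\|D^\alpha f\|_{L^2}\bigr).
\]

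Finally, I would optimize in $R$ by choosing $R=\bigl(\sup_{|\alpha|=N}\|D^\alpha f\|_{L^2}/\|f\|_{L^2}\bigr)^{1/N}$, which makes the two terms comparable and yields
\[
 \|h\|_{L^1}\le C\,\|f\|_{L^2}^{1-\frac{d}{2N}}\Bigl(\sup_{|\alpha|=N}\|D^\alpha f\|_{L^2}\Bigr)^{\frac{d}{2N}},
\]
giving the announced bound on $\|f\|_{M^p}$. The main (minor) obstacle is tracking the Fourier convention so that the relation between $D^\alpha f$ and $|x|^\alpha h$ holds with the constants as stated, and checking that $N>d/2$ is exactly what is needed at the radial-integrability step; the rest is a textbook $L^1$-via-Sobolev argument.
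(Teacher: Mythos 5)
Your proof is correct and is precisely the classical argument: the paper itself offers no proof of this lemma, simply citing it from Bergh--L\"ofstr\"om, and your reduction to $\|\F^{-1}f\|_{L^1}$ via Young's inequality followed by the dyadic split at radius $R$, the Plancherel/weighted-Plancherel estimates, and the optimization $R=\bigl(\sup_{|\alpha|=N}\|D^\alpha f\|_{L^2}/\|f\|_{L^2}\bigr)^{1/N}$ is exactly the standard proof found in that reference. The only points to tidy are the ones you already flag (the Fourier convention in $\F^{-1}(D^\alpha f)(x)=(ix)^\alpha \F^{-1}f(x)$ and the multinomial identity $|x|^{2N}=\sum_{|\alpha|=N}\binom{N}{\alpha}x^{2\alpha}$ used to pass to the supremum over $|\alpha|=N$), and both are routine.
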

\begin{Prop}\label{pro fourier multipliers}
Let $f\in\Sc'$, $\phi\in\B''$ and $p\in[1,\infty]$. If $\varphi_j\ast f$ belongs to $L^p$ for all $j\in\N$, then, for all $j\in\N$,
\[
 \|\J^\phi \varphi_j\ast f \|_{L^p}
 \le C \phi(2^j) \| \varphi_j\ast f \|_{L^p},
\]
for some positive constant $C$.
\end{Prop}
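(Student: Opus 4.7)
The plan is to reduce the claim to a Fourier multiplier bound for a carefully chosen symbol $m_j$ and then apply Lemma~\ref{lemme fourier multipliers}. Choose once and for all a smooth cutoff $\eta\in\Sc$ equal to $1$ on the annulus $\{1/2\le |\xi|\le 2\}$ and supported in $\{1/4\le |\xi|\le 4\}$. For $j\ge 1$, the function $\F\varphi_j(\xi)=\varphi(2^{-j}\xi)$ is supported where $1/2\le|2^{-j}\xi|\le 2$, so $\eta(2^{-j}\xi)\F\varphi_j(\xi)=\F\varphi_j(\xi)$. Setting
\[
 m_j(\xi)=\phi(\sqrt{1+|\xi|^2})\,\eta(2^{-j}\xi),
\]
we thus have $\J^\phi(\varphi_j\ast f)=(\F^{-1}m_j)\ast(\varphi_j\ast f)$, and by the very definition of $M^p$ it suffices to prove $\|m_j\|_{M^p}\le C\phi(2^j)$ uniformly in $j$. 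The low-frequency case $j=0$ is handled analogously with a cutoff supported in a bounded ball and is easier since there are no scaling factors.

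Next, I would apply Lemma~\ref{lemme fourier multipliers} to $m_j$ with an integer $N>d/2$. This requires $L^2$ estimates for $m_j$ and its partial derivatives of order $N$. On the support of $\eta(2^{-j}\cdot)$ one has $\sqrt{1+|\xi|^2}\asymp 2^j$ uniformly in $j\ge 1$; combined with the sub-multiplicativity of $\bar\phi$ (which gives $\phi(\lambda 2^j)\le \bar\phi(\lambda)\phi(2^j)$ and, symmetrically, a lower bound), this yields $\phi(\sqrt{1+|\xi|^2})\asymp \phi(2^j)$ on that support. Since the support has Lebesgue measure of order $2^{jd}$, we obtain $\|m_j\|_{L^2}\le C\phi(2^j)\,2^{jd/2}$.

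For the derivative bounds I would expand $D^\alpha m_j$ via Leibniz and treat the factor $\phi(\sqrt{1+|\xi|^2})$ by Fa\`a di Bruno's formula, invoking the crucial $\B''$ hypothesis $t^m|D^m\phi(t)|\le C_m\phi(t)$ at $t=\sqrt{1+|\xi|^2}\sim 2^j$; the derivatives of $\xi\mapsto\sqrt{1+|\xi|^2}$ are smooth and each additional $\xi$-derivative produces at worst a factor of order $1/|\xi|\sim 2^{-j}$. The derivatives of $\eta(2^{-j}\cdot)$ contribute $2^{-j|\beta|}$. Combining these, one obtains $|D^\alpha m_j(\xi)|\le C\phi(2^j)\,2^{-j|\alpha|}$ on the support of $m_j$, hence $\|D^\alpha m_j\|_{L^2}\le C\phi(2^j)\,2^{-jN}\,2^{jd/2}$ whenever $|\alpha|=N$. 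Substituting these bounds into Lemma~\ref{lemme fourier multipliers} yields
\[
 \|m_j\|_{M^p}\le C\bigl(\phi(2^j)\,2^{jd/2}\bigr)^{1-d/(2N)}\bigl(\phi(2^j)\,2^{jd/2-jN}\bigr)^{d/(2N)}=C\phi(2^j),
\]
after the powers of $2^j$ cancel. Combined with the multiplier reduction of the first step, this gives the claimed inequality.

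The main technical obstacle is the chain-rule estimate $|D^\alpha_\xi \phi(\sqrt{1+|\xi|^2})|\le C\phi(2^j)\,2^{-j|\alpha|}$: one must check that when Fa\`a di Bruno's formula is unwound, every combinatorial term can be controlled by pairing a $\phi^{(m)}$ factor evaluated at scale $2^j$ (handled via $t^m|D^m\phi(t)|\le C_m\phi(t)$) with products of derivatives of $\sqrt{1+|\xi|^2}$ to produce exactly the advertised decay $2^{-j|\alpha|}$. Once this derivative bound is secured, the rest of the argument is a routine application of Lemma~\ref{lemme fourier multipliers} and the definition of the Fourier multiplier norm.
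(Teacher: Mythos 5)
Your proof is correct and takes essentially the same route as the paper's: both reduce the claim to a uniform $M^p$ bound for the symbol $\phi(\sqrt{1+|\cdot|^2})$ localized to a dyadic annulus, control the symbol and its derivatives there via the $\B''$ condition $t^m|D^m\phi(t)|\le C_m\phi(t)$ together with the submultiplicativity of $\overline{\phi}$, and conclude with Lemma~\ref{lemme fourier multipliers}, the powers of $2^j$ cancelling exactly. The only cosmetic difference is that you localize with an auxiliary cutoff $\eta$ at scale $2^j$, whereas the paper uses the reproducing identity $\varphi_j\ast f=\sum_{l=-1}^{1}\varphi_{j+l}\ast\varphi_j\ast f$ and rescales the symbol to the unit annulus before applying the lemma.
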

\begin{proof}
Since $\varphi_j* f =\sum_{l=-1}^1\varphi_{j+l} * \varphi_j* f$, we have
\[
 \| \J^\phi \varphi_j* f \|_{L^p}
 \le \sum_{l=-1}^1 \|\F\J^\phi \varphi_{j+l} \|_{M^p} \| \varphi_j* f \|_{L^p}.
\]
Hence, it suffices to demonstrate that
\begin{equation}\label{equa 1 pro avec les fourier multipliers}
 \|\F\J^\phi \varphi_{j+l} \|_{M^p}
 \le C \phi(2^j).
\end{equation}
From
\begin{align*}
 \F\J^\phi \varphi_{j+l}
 =\phi (\sqrt{1+|\cdot|^2}) \F\varphi_{j+l},
\end{align*}
we deduce that (\ref{equa 1 pro avec les fourier multipliers}) is equivalent to 
\begin{equation}\label{equa 2 pro avec les fourier multipliers}
 \|\phi(2^{j+l}\sqrt{2^{-2(j+l)}+|\cdot|^2}) \varphi(\cdot) \|_{M^p}\le C\phi(2^j).
\end{equation}
Now for $\alpha\in\N^d$ and $g=\phi(2^{j+l}\sqrt{2^{-2(j+l)}+|\cdot|^2}) \varphi(\cdot)$, the term $|D^\alpha g(x)|$ is bounded by expressions of the form
\[
 C2^{(j+l)\alpha_1} |\phi^{(\alpha_1)}(2^{j+l}\sqrt{2^{-2(j+l)}+|x|^2})|
\]
in finite number, with $0\le \alpha_1\le |\alpha|$. Since $\phi \in\B''$, $|D^\alpha g(x)|$ is bounded by a finite sum of terms of the form
\[
 C\phi(2^j)\overline{\phi}(\sqrt{2^{-2(j+l)}+|x|^2}) (2^{-2(j+l)}+|x|^2)^{-\alpha_1/2}.
\]
From Lemma~\ref{lemme fourier multipliers} with $g$, the properties of $\overline{\phi}$ and the fact that the support of $\varphi$ is in $\{x\in \R^d : \frac{1}{2}\le |x|\le 2\}$, we obtain
\[
 \|g\|_{M^p}
 \le  C \|g\|_{L^2}^{1-\frac{d}{2N}} (\sup_{|\alpha|=N}\|D^\alpha f\|_{L^2})^{\frac{d}{2N}}
 \le C\phi(2^j),
\]
which means that inequality~(\ref{equa 2 pro avec les fourier multipliers}) is satisfied.
\end{proof}

\begin{Lemma}\label{lemme inclusion des Sobolev gen}
If $\phi\in \B''$ is such that $\lb(\phi)>0$, then $\J^{1/\phi}$ maps $L^p$ into $L^p$ continuously.
\end{Lemma}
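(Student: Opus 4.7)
The idea is to reduce continuity of $\J^{1/\phi}$ on $L^p$ to Littlewood--Paley blocks, apply Proposition~\ref{pro fourier multipliers} to the reciprocal function $1/\phi$, and use the hypothesis $\lb(\phi)>0$ to sum the resulting estimates.

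The preliminary step is to verify that $1/\phi$ itself belongs to $\B''$. The normalization $(1/\phi)(1)=1$ and the $C^\infty$ regularity on $[1,\infty)$ are immediate. The derivative condition follows from Faà di Bruno: $D^m(1/\phi)$ is a finite linear combination of terms of the form $\phi^{-(k+1)}\prod_i (D^{n_i}\phi)^{a_i}$ with $\sum_i a_i n_i=m$ and $\sum_i a_i=k$, and the hypothesis $t^{n_i}|D^{n_i}\phi(t)|\le C_{n_i}\phi(t)$ yields $t^m|D^m(1/\phi)(t)|\le C_m' (1/\phi)(t)$.

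With $1/\phi \in \B''$, fix $f\in L^p$ and a system of test functions $(\varphi_j)_j$ and decompose $f=\sum_{j\ge 0} \varphi_j * f$ in $\Sc'$. A direct scaling argument shows that $\|\varphi_j\|_{L^1}$ is bounded uniformly in $j$, so Young's inequality gives $\|\varphi_j*f\|_{L^p}\le C\|f\|_{L^p}$ for every $j$. Proposition~\ref{pro fourier multipliers} applied to $1/\phi$ then yields
\[
 \|\J^{1/\phi}(\varphi_j*f)\|_{L^p}
 \le \frac{C}{\phi(2^j)} \|\varphi_j*f\|_{L^p}
 \le \frac{C'}{\phi(2^j)} \|f\|_{L^p}.
\]
The hypothesis $\lb(\phi)>0$, combined with the estimates recalled in Section~\ref{sec:Boyd}, provides $\epsilon\in(0,\lb(\phi))$ and $c>0$ such that $\phi(2^j)\ge c\, 2^{j\epsilon}$ for every $j\ge 0$, so that $\sum_{j\ge 0}1/\phi(2^j)<\infty$. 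Consequently $\sum_j \J^{1/\phi}(\varphi_j*f)$ converges absolutely in $L^p$ and coincides with $\J^{1/\phi}f$ in $\Sc'$, which yields a bound of the form $\|\J^{1/\phi}f\|_{L^p}\le C''\|f\|_{L^p}$.

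The main obstacle is really bookkeeping: transferring the $\B''$ regularity from $\phi$ to $1/\phi$ and being careful that one works with \emph{absolute} summability of $L^p$ norms rather than $L^p$-convergence of the Littlewood--Paley series itself, so that the endpoint cases $p=1$ and $p=\infty$ require no special treatment.
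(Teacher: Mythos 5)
Your proof is correct and follows essentially the same route as the paper: decompose $f$ into Littlewood--Paley blocks, apply Proposition~\ref{pro fourier multipliers} to $1/\phi$, bound $\|\varphi_j*f\|_{L^p}$ by Young's inequality, and sum using $\lb(\phi)>0$. The only differences are cosmetic: you explicitly verify $1/\phi\in\B''$ (which the paper leaves implicit but is indeed needed to invoke the proposition) and you use the pointwise bound $\phi(2^j)\ge c\,2^{j\epsilon}$ for the summability, where the paper instead appeals to the equivalent characterization $\bar\phi\in L^1_*(0,1)$.
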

\begin{proof}
From Proposition~\ref{pro fourier multipliers}, we get
\begin{align*}
 \|\J^{1/\phi} f \|_{L^p}
 \le \sum_{j} \|\J^{1/\phi} \varphi_j* f \|_{L^p}
 &\le C\sum_{j} \frac{1}{\phi}(2^j) \| \varphi_j* f \|_{L^p}\\
 &\le C \big(\sum_{j}\frac{1}{\phi}(2^j)\big)\|f\|_{L^p}\\
 &\le C (\int_{0}^{1}\overline{\phi}(t)\frac{dt}{t}) \|f\|_{L^p}\\
 &\le C\|f\|_{L^p},
\end{align*}
which ends the proof.
\end{proof}

\begin{Prop}\label{pro inclusion des Sobolev gen}
Let $\phi_0,\phi_1\in \B''$, $\phi=\phi_0/\phi_1$, $p\in[1,\infty]$; if $\lb(\phi)>0$, then we have $H^{\phi_0}_p\hookrightarrow H^{\phi_1}_p$.
\end{Prop}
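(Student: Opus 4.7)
The plan is to reduce the continuous inclusion to the $L^p$-mapping property of the generalized Bessel operator $\J^{1/\phi}$ already established in Lemma \ref{lemme inclusion des Sobolev gen}. The key observation is the multiplicative identity $\J^{\phi_1} = \J^{1/\phi}\circ \J^{\phi_0}$ on $\Sc'$, which is a direct consequence of the definition $\J^\psi f=\F^{-1}(\psi(\sqrt{1+|\cdot|^2})\F f)$ together with the pointwise factorization of the symbols $\phi_1 = (\phi_1/\phi_0)\cdot\phi_0$; under the standing assumption $\phi_0,\phi_1\in \mathcal{B}''$, all the Fourier multipliers involved act properly on $\Sc'$, so the composition is legitimate.

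Once this factorization is in hand, the proof is short. For $f\in H^{\phi_0}_p$, by definition $\J^{\phi_0} f\in L^p$, hence
\[
 \|\J^{\phi_1} f\|_{L^p} = \|\J^{1/\phi}(\J^{\phi_0} f)\|_{L^p} \le C\|\J^{\phi_0} f\|_{L^p},
\]
provided Lemma \ref{lemme inclusion des Sobolev gen} applies to the exponent $\phi=\phi_0/\phi_1$. Two points must be checked for this: the quotient $\phi$ lies in $\mathcal{B}''$, and $\lb(\phi)>0$. The latter is the hypothesis of the proposition, while the former follows by a routine Leibniz argument: the bounds $t^m|D^m\phi_1(t)|\le C_m \phi_1(t)$ propagate to $1/\phi_1$ via induction (yielding $t^m|D^m(1/\phi_1)(t)|\le C'_m/\phi_1(t)$), and combining this with the analogous estimates for $\phi_0$ gives $t^m|D^m\phi(t)|\le C''_m\phi(t)$; the remaining properties defining $\mathcal{B}''$ (normalization at $1$, $C^\infty$ on $[1,\infty)$, and finiteness of $\overline{\phi}$) are immediate from those of $\phi_0$ and $\phi_1$.

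The main obstacle, if there is one, is thus just the bookkeeping needed to verify that $\mathcal{B}''$ is stable under quotients of this kind; the rest of the argument is purely formal and parallels the classical integer-exponent setting, where $H^{s_0}_p\hookrightarrow H^{s_1}_p$ whenever $s_0\ge s_1$ is obtained by factoring $\J^{s_1}=\J^{s_1-s_0}\circ \J^{s_0}$ and invoking the $L^p$-boundedness of the Bessel potential of nonpositive order.
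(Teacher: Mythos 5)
Your proof is correct and follows essentially the same route as the paper: both factor $\J^{\phi_1}=\J^{1/\phi}\J^{\phi_0}$ and invoke Lemma~\ref{lemme inclusion des Sobolev gen} applied to $\phi=\phi_0/\phi_1$. You are in fact slightly more careful than the paper, which applies the lemma without explicitly checking that $\phi_0/\phi_1$ lies in $\B''$; your Leibniz/quotient-rule verification of that stability is the right way to close this (tacit) step.
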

\begin{proof}
From Lemma~\ref{lemme inclusion des Sobolev gen}, we directly get
\[
 \|\J^{\phi_1}f \|_{L^p}
 = \|\J^{1/\phi}\J^{\phi_0}f \|_{L^p}\le C \|\J^{\phi_0}f \|_{L^p},
\]
hence the conclusion.
\end{proof}

\begin{Def}Let $\phi\in\B$ and $(\varphi_k)_k$ be a system of test functions. For $p,q\in[1,\infty]$, the generalized Besov Space is the space 
\[
 B^\phi_{p,q}
 =\{f\in\Sc': (\varphi_k* f)_k\in l^q_\phi(L^p)\}.
\]
This space is equipped with the norm
\[
 \|f\|_{B^\phi_{p,q}}
 = \|(\varphi_k * f)_k \|_{l^q_\phi(L^p)}
 = \|(\|\varphi_k * f \|_{L^p})_k \|_{l^q_\phi}.
\]
For $\phi(t)=t^s$ ($s\in \R$), we set $B^s_{p,q}= B^\phi_{p,q}$, i.e.,
\[
 B^s_{p,q}
 =\{f\in\Sc': (\varphi_k* f)_k\in l^q_s(L^p)\}.
\]
\end{Def}

Let $\gamma\in \B$, $\phi_0,\phi_1\in \B''$ and set $f=\phi_0/\phi_1$, $\psi=\phi_0/(\gamma\circ f)$. For $p,q\in[1,\infty]$, it is well known (see \cite{Merucci:84,Loosveldt:18}) that if $\lb(f)>0$ or $\ub(f)<0$ and if $0<\lb(\gamma)\le \ub(\gamma)<1$, then we have
\[
 K^\gamma_q(H^{\phi_0}_p,H^{\phi_1}_p)
 =B^{\psi}_{p,q}.
\]
Let us generalize this result to several spaces.
\begin{Thm}\label{interpolation de sobolev gen est un besov gen}
Let $p,q\in[1,\infty]$, $\gamma_1,\dotsc,\gamma_n\in \B$, $\phi_0,\phi_1,\dotsc,\phi_n\in \B''$ and for $l_1,l_2\in\{1,\dotsc,n\}$, set $f_{l_1,l_2}=\phi_{l_1}/\phi_{l_2}$. If  $\lb(f_{l_1,l_2})>0$ or $\ub(f_{l_1,l_2})<0$ for $l_1<l_2$ and if $0<\lb(\gamma_k)\le \ub(\gamma_k)<1$, then
\[
 K_q^{\gamma_1,\dotsc,\gamma_n}(H^{\phi_0}_p,\dotsc,H^{\phi_n}_p)=B^{\psi}_{p,q},
\]
with $\psi=\phi_0/(f_{0,1}\circ\gamma_1\dotsb f_{0,n}\circ\gamma_n)$.
\end{Thm}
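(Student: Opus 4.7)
The plan is to adapt the classical proof of the case $n=1$ (see \cite{Merucci:84,Loosveldt:18}) via a Littlewood-Paley argument. The fundamental input is Proposition~\ref{pro fourier multipliers}: applied with both $\phi=\phi_l\in\B''$ and $\phi=1/\phi_l\in\B''$, it yields the two-sided dyadic estimate
\[
 \|\varphi_j\ast f\|_{L^p} \le C\,\phi_l(2^j)^{-1}\,\|f\|_{H^{\phi_l}_p}
 \qquad\text{and}\qquad
 \|\J^{\phi_l}(\varphi_j\ast f)\|_{L^p}\le C\,\phi_l(2^j)\,\|\varphi_j\ast f\|_{L^p},
\]
valid for every $f\in\Sc'$ and $j\in\N$.

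For the inclusion $K_q^{\gamma_1,\dotsc,\gamma_n}(H^{\phi_0}_p,\dotsc,H^{\phi_n}_p)\hookrightarrow B^\psi_{p,q}$, I would start from an arbitrary decomposition $f=f_0+\dotsb+f_n$ with $f_l\in H^{\phi_l}_p$. The first estimate above gives $\|\varphi_j\ast f\|_{L^p}\le C\sum_l\phi_l(2^j)^{-1}\|f_l\|_{H^{\phi_l}_p}$, so that, with $t_0=1$, passing to the infimum over admissible decompositions yields
\[
 \min_{0\le l\le n}\{t_l\phi_l(2^j)\}\cdot\|\varphi_j\ast f\|_{L^p}\le C\,K(1,t_1,\dotsc,t_n,f).
\]
Raising to the $q$-th power, integrating against $\gamma_1(t_1)^{-q}\dotsb\gamma_n(t_n)^{-q}\,\frac{dt_1}{t_1}\dotsb\frac{dt_n}{t_n}$, and applying Fubini reduces the inclusion to the purely Boyd-function estimate
\[
\int_{(0,\infty)^n}\left(\frac{\min_l\{t_l\phi_l(u)\}}{\gamma_1(t_1)\dotsb\gamma_n(t_n)}\right)^q\frac{dt_1\dotsb dt_n}{t_1\dotsb t_n}\ \le\ C\,\psi(u)^q,
\]
to be verified uniformly in $u>0$.

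The reverse inclusion is obtained by constructing an explicit decomposition of $f$ adapted to $(t_1,\dotsc,t_n)$. Under the hypothesis $\lb(f_{l_1,l_2})>0$ or $\ub(f_{l_1,l_2})<0$ for each $l_1<l_2$, the ratios $\phi_l/\phi_{l'}$ are eventually monotone, so the sets $I_l(t)=\{j\in\Z:t_l\phi_l(2^j)=\min_{l'}t_{l'}\phi_{l'}(2^j)\}$ form a partition of $\Z$ into intervals. Setting $f_l=\sum_{j\in I_l(t)}\varphi_j\ast f$ and invoking the second dyadic estimate (combined with a vector-valued Littlewood-Paley bound to recover the $\ell^q$-exponent) produces a companion lower bound on $K(1,t_1,\dotsc,t_n,f)$ to which the same change of variables as above applies.

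The main obstacle is the auxiliary integral estimate. The natural substitution is $t_l=f_{0,l}(u)v_l$, which turns $\min_l\{t_l\phi_l(u)\}$ into $\phi_0(u)\min\{1,v_1,\dotsc,v_n\}$ and factorises the $u$-dependence. The sub-multiplicativity of $\bar\gamma_l$ combined with $0<\lb(\gamma_l)\le\ub(\gamma_l)<1$ then controls $\gamma_l(f_{0,l}(u)v_l)$ up to constants by $\gamma_l(f_{0,l}(u))\gamma_l(v_l)$, ensuring convergence of the residual integral in $v$ and identification of the prefactor with $\psi(u)^q$. The hypothesis on the indices of $f_{l_1,l_2}$ is precisely what makes this substitution a diffeomorphism and what, in the reverse direction, guarantees that the $I_l(t)$ are intervals on which the dyadic estimate aggregates correctly.
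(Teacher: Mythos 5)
Your route is genuinely different from the paper's: the paper does not redo any Littlewood--Paley analysis, but instead reduces the $(n+1)$-tuple to the known two-space identity $K^\gamma_q(H^{\phi_0}_p,H^{\phi_1}_p)=B^{\phi_0/(\gamma\circ f)}_{p,q}$ by iterated changes of variables, using the nesting $H^{\phi_{l_1}}_p\hookrightarrow H^{\phi_{l_2}}_p$ of Proposition~\ref{pro inclusion des Sobolev gen} to identify the sum spaces and Proposition~\ref{reduction to lower order} to collapse coordinates, until only a two-space $K$-functional $K^{\eta}_q(H^{\phi_{n-1}}_p,H^{\phi_n}_p)$ remains. Adapting the $n=1$ proof directly, as you propose, is legitimate in principle (it is essentially Sparr's argument in the power-weight case), but as sketched it has a genuine gap.

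The gap is in the forward inclusion. From $\min_{0\le l\le n}\{t_l\phi_l(2^j)\}\,\|\varphi_j\ast f\|_{L^p}\le C\,K(1,t_1,\dotsc,t_n,f)$, integrating the $q$-th power against $\prod_l\gamma_l(t_l)^{-q}\,dt/t$ gives, for each \emph{fixed} $j$, the bound $\|\varphi_j\ast f\|_{L^p}^q\,\Theta(2^j)\le C\|f\|^q_{K}$, where $\Theta(u)=\int\bigl(\min_l\{t_l\phi_l(u)\}/\prod_l\gamma_l(t_l)\bigr)^q\,dt/t$. Your substitution shows $\Theta(u)\asymp\psi(u)^q$ (note it is the lower bound $\Theta\gtrsim\psi^q$, not the upper bound you state, that is relevant here). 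But this only yields $\sup_j\psi(2^j)\|\varphi_j\ast f\|_{L^p}\le C\|f\|_K$, i.e.\ the embedding into $B^\psi_{p,\infty}$: there is no sum over $j$ present for Fubini to act on, and the right-hand side cannot be summed over $j$. To reach $\ell^q$ you must instead evaluate at $t^{(j)}=(f_{0,1}(2^j),\dotsc,f_{0,n}(2^j))$ and show that the discrete sum along this one-dimensional family inside $(0,\infty)^n$ is dominated by the $n$-fold integral defining $\|f\|_K$; that requires fattening each $t^{(j)}$ into a box of $dt/t$-measure bounded below on which $K$ and the $\gamma_l$ are comparable, and proving these boxes are pairwise disjoint --- which is precisely where the index hypotheses on the $f_{l_1,l_2}$ enter. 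A symmetric issue occurs in the reverse inclusion: the decomposition over the intervals $I_l(t)$ gives an \emph{upper} (not lower) bound $K(1,t,f)\le C\sum_j\min_l\{t_l\phi_l(2^j)\}\|\varphi_j\ast f\|_{L^p}$, and passing the sum over $j$ through the $L^q(dt/t)$-norm for $q>1$ is a discrete Hardy-type convolution inequality, not a vector-valued Littlewood--Paley bound; your integral estimate is exactly its $q=1$ case and must be upgraded for general $q$. None of this is unfixable, but these summation steps are the real content of the direct proof and are absent from the sketch.
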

\begin{proof}
Let us assume that $\lb(f_{l_1,l_2})>0$ for $l_1<l_2$; it is evident that $\psi\in\B$. Moreover, we can suppose that $f_{l_1,l_2}\in\Bp_+$.

Let us first show that
\begin{equation}\label{equa 1 inter sob}
K_q^{\gamma_1,\dotsc,\gamma_n}(H^{\phi_0}_p,\dotsc,H^{\phi_n}_p)=K_q^{\eta_2,\dotsc,\eta_n}(H^{\phi_1}_p,\dotsc,H^{\phi_n}_p),
\end{equation}
with
\[
 \eta_2(t)=\frac{\gamma_1 \Big(f_{0,1}\big( f_{1,2}^{-1}(t) \big) \Big)\gamma_2 \Big(f_{0,2} \big( f_{1,2}^{-1}(t)\big)\Big)}{f_{0,1}\big(f_{1,2}^{-1}(t)\big)}
\]
and
\[
\eta_l(t)=\gamma_l \Big( f_{0,l}\big( f_{1,l}^{-1}(t) \big)\Big),
\]
for $l\in\{3,\dotsc,n\}$. We already know that
\[
 \Sigma(H^{\phi_0}_p,\dotsc,H^{\phi_n}_p)
 =\Sigma(H^{\phi_1}_p,\dotsc,H^{\phi_n}_p).
\]
From there, we can get (\ref{equa 1 inter sob}) using $s_l=f_{1,l}^{-1}(t_l)$ ($l=2,\dotsc,n$), Proposition \ref{reduction to lower order} and
\[
 (u_1,\dotsc,u_n)=(f_{0,1}(s_1),\dotsc,f_{0,n}(s_n)).
\]
Applying the same procedure, we obtain
\begin{equation}\label{equa 2 inter sob}
 K_q^{\gamma_1,\dotsc,\gamma_n}(H^{\phi_0}_p,\dotsc,H^{\phi_n}_p)
=K_q^{\eta}(H^{\phi_{n-1}}_p,H^{\phi_n}_p),
\end{equation}
with $\eta$ such that $\psi=\phi_{n-1}/(\eta\circ f_{(n-1),n})$. Given that the right-hand side of \eqref{equa 2 inter sob} is equal to $B^{\psi}_{p,q}$, we can draw the conclusion.
\end{proof}
In the case where $\phi_j(t) =t^{s_j}$ ($s_j\in\R$) for $j\in \{0,\dotsc,n\}$, we recover a result of \cite{Spa:74}:
\[
 K_q^{\theta_1,\dotsc, \theta_n}(H_p^{s_0},\dotsc, H_p^{s_n}) = B_{p,q}^s,
\]
with $s=(1-\sum_{j=1}^n \theta_j) s_0+ \sum_{j=1}^n \theta_j s_j$, where $\theta_1,\dotsc,\theta_n >0$ are such that $\sum_{j=1}^n \theta_j<1$.

\section{Extension of the Stein-Weiss Theorem}\label{sec:lorentz}
To illustrate the applicability of the theory introduced here, we present a result concerning the generalized interpolation of two spaces, in a context where interpolation across three spaces becomes necessary.

We begin by recalling the definition of the generalized Lorentz spaces \cite{Lorentz:50}:
\begin{Def}
Let $(\Omega,\mu)$ be a measure space with totally $\sigma$-finite measure $\mu$ and let $\omega$ be a fixed weight function on $\Omega$, i.e., a positive measurable function on $\Omega$. Given $\phi\in\B$ and $p\in [1,\infty]$, the Lorentz space $\Lambda_p^\phi(\omega d\mu)$ consists of all $\mu$-measurable functions $a$ on $\Omega$ for which
\[
 \|a\|_{\Lambda_p^\phi(\omega d\mu)}
  =\|\phi\, (\omega a)^*_\mu\|_{L^p_*}
 \] 
is finite, where $(a)^*_\mu$ denotes the non-increasing rearrangement of $|a|$ on $(0,\infty)$ with respect to $\mu$. Specifically, defining
\[
S_a(\alpha) =\mu(\{x\in \Omega: |a(x)|>\alpha\}),
\]
we set
\[
 (a)_\mu^*(t) = \inf\{ \alpha\ge 0: S_a(\alpha)\le t\},
\]
for $t\ge 0$. 
\end{Def}
The space $\Lambda_p^\phi(\omega d\mu)$ is quasi-normed with respect to $\|\cdot\|_{\Lambda_p^\phi}$.
\begin{Ex}
When $\phi(t)=t^{1/q}$, one recovers the usual Lorentz-Zygmund space (see, e.g., \cite{Bergh:76}):
\[
 \Lambda_p^\phi(d\mu)=L^{q,p}(d\mu).
\]
\end{Ex}

We now consider the generalized interpolation of two such spaces. On the one hand, the classical Stein-Weiss theorem can be stated as follows:
\begin{Thm}
Let $p_0, p_1 \in [1, \infty]$, $\theta \in (0,1)$, and suppose $\frac{1}{p} = \frac{1 - \theta}{p_0} + \frac{\theta}{p_1}$. Define
\[
 \omega = \omega_0^{(1 - \theta)p/p_0} \, \omega_1^{\theta p / p_1}.
\]
Then,
\begin{equation}\label{equa Stein Weiss}
 K^\theta_p(L^{p_0}(\omega_0 d\mu), L^{p_1}(\omega_1 d\mu)) = L^p(\omega d\mu).
\end{equation}
\end{Thm}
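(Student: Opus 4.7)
\emph{Plan for the proof.}
The plan is to compute the $K$-functional of the couple $\bigl(L^{p_0}(\omega_0\,d\mu),L^{p_1}(\omega_1\,d\mu)\bigr)$, substitute it into
\[
 \|f\|^{p}_{K^{\theta}_{p}} = \int_{0}^{\infty}\bigl(t^{-\theta}K(t,f)\bigr)^{p}\,\frac{dt}{t},
\]
and use Fubini's theorem to identify the resulting expression with a constant multiple of $\|f\|^{p}_{L^{p}(\omega\,d\mu)}$.

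First I would handle the equal-exponent case $p_0 = p_1 = r$, in which $p = r$ and the weight reduces to $\omega = \omega_0^{1-\theta}\omega_1^{\theta}$. Here the infimum over decompositions $f = f_0 + f_1$ can be carried out pointwise, and a short convexity argument on each fibre produces
\[
 K(t,f)^{r}\asymp\int_{\Omega}\min\!\bigl(\omega_0(x),\,t^{r}\omega_1(x)\bigr)\,|f(x)|^{r}\,d\mu(x).
\]
After Fubini, the inner integral in $t$ is evaluated through the substitution $u = t^{r}\omega_1(x)/\omega_0(x)$ and equals a universal constant (finite since $\theta\in(0,1)$) times $\omega_0(x)^{1-\theta}\omega_1(x)^{\theta}$, which is precisely the desired weight.

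For the general case $p_0\neq p_1$, the pointwise infimum no longer simplifies so cleanly. I would pass to the non-increasing rearrangements $(\omega_0^{1/p_0}f)^{*}_{\mu}$ and $(\omega_1^{1/p_1}f)^{*}_{\mu}$, using the classical fact that the $K$-functional of a couple of weighted Lebesgue spaces is equivalent to a one-variable integral in terms of these rearrangements. The same Fubini-and-substitution scheme then yields exponents $(1-\theta)p/p_0$ on $\omega_0$ and $\theta p/p_1$ on $\omega_1$, consistent with the relation $\tfrac{1}{p}=\tfrac{1-\theta}{p_0}+\tfrac{\theta}{p_1}$.

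The main obstacle is the honest computation of the $K$-functional when $p_0\neq p_1$: the optimal splitting is no longer of characteristic-function type, and an elementary pointwise minimisation is not sufficient. This step is classical and goes back to \cite{Bergh:76}; it is usually carried out through a careful level-set analysis after reducing to non-negative decreasing representatives, so that once this identification is granted, the substitution-and-Fubini calculation sketched above closes the argument.
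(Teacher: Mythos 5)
First, a point of reference: the paper states this result as the classical Stein--Weiss theorem and supplies no proof, so there is no in-paper argument to compare yours against; I can only assess the proposal on its own terms. Your treatment of the equal-exponent case $p_0=p_1=r$ is correct and essentially complete: the infimum defining $K(t,f)^r$ can be taken fibrewise (a convexity argument gives $\inf_{a+b=c}(|a|^r\omega_0+t^r|b|^r\omega_1)\asymp |c|^r\min(\omega_0,t^r\omega_1)$ with constants depending only on $r$), and the Fubini-plus-substitution computation then produces the weight $\omega_0^{1-\theta}\omega_1^{\theta}$ exactly as you describe.

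The gap is in the case $p_0\neq p_1$. The ``classical fact'' you invoke --- that $K\bigl(t,f;L^{p_0}(\omega_0\,d\mu),L^{p_1}(\omega_1\,d\mu)\bigr)$ is equivalent to a one-variable integral expressed through the two rearrangements $(\omega_0^{1/p_0}f)^*_\mu$ and $(\omega_1^{1/p_1}f)^*_\mu$ --- is not a correct statement. Holmstedt-type formulas via a single rearrangement apply when there is one underlying measure (or after absorbing one weight into the measure); when the two weights are genuinely different, the relative ordering of $\omega_0/\omega_1$ on $\Omega$ matters and is destroyed by rearranging the two weighted functions separately, so the $K$-functional is not a function of that pair of rearrangements. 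This loss of information is precisely why \eqref{equa fausse} fails for Lorentz couples, as the paper emphasizes. The standard repair keeps the computation pointwise rather than passing to rearrangements: by the power theorem one replaces the norms by their $p_j$-th powers, for which $K\bigl(t,f;(L^{p_0}(\omega_0))^{(p_0)},(L^{p_1}(\omega_1))^{(p_1)}\bigr)\asymp\int_\Omega\min\bigl(|f|^{p_0}\omega_0,\,t\,|f|^{p_1}\omega_1\bigr)\,d\mu$, again by fibrewise minimisation; Fubini and the substitution $u=t\,|f|^{p_1}\omega_1/(|f|^{p_0}\omega_0)$ then yield $\int_\Omega |f|^{(1-\eta)p_0+\eta p_1}\,\omega_0^{1-\eta}\omega_1^{\eta}\,d\mu$ with $\eta=\theta p/p_1$, which is $\|f\|_{L^p(\omega\,d\mu)}^p$ under the hypothesis $1/p=(1-\theta)/p_0+\theta/p_1$. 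You should also make explicit where that hypothesis --- equivalently, the fact that the second interpolation index equals $p$ --- enters your argument: for a second index $q\neq p$ the identity is false (one obtains block-Lorentz-type spaces instead), so any proof that does not visibly use $q=p$ cannot be complete.
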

On the other hand, in the unweighted setting, we have the following result for the generalized case \cite{Merucci:84}:
\begin{Thm}\label{interpolation de Lorentz gen est un Lorentz gen}
Let $\gamma,\phi_0,\phi_1\in\B$ and define $f=\phi_0/\phi_1$, $\psi=\phi_0/(\gamma\circ f)$. Let $p,p_0,p_1,q\in[1,\infty]$. If $\lb(f)>0$ or $\ub(f)<0$ and if $0<\lb(\gamma)$, $\ub(\gamma)<1$, then
\begin{equation}\label{Equation inter Lorentz non weighted}
 K^\gamma_q(\Lambda_{p_0}^{\phi_0}(d\mu),\Lambda_{p_1}^{\phi_1}(d\mu)) = \Lambda_q^{\psi}(d\mu).
\end{equation}
\end{Thm}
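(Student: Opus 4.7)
The plan is to reduce the theorem to two ingredients: a Holmstedt-type formula expressing $K(t,\cdot\,;\Lambda^{\phi_0}_{p_0}(d\mu),\Lambda^{\phi_1}_{p_1}(d\mu))$ in terms of the non-increasing rearrangement, and Boyd-weighted Hardy inequalities controlled by the indices of $\gamma$. Using the fact recalled in Section~\ref{sec:Boyd} that every Boyd function with nonzero indices is equivalent to a Boyd-regular monotone one, I may assume $\lb(f)>0$ and $f\in\Bp_+$, so that $f^{-1}\in\Bp_+$ exists. The case $\ub(f)<0$ is entirely symmetric, recovered by swapping the roles of $\phi_0$ and $\phi_1$ (and of the two integration endpoints that will appear below).

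First, I would establish the Holmstedt-type identity
\[
 K(t,a;\Lambda^{\phi_0}_{p_0},\Lambda^{\phi_1}_{p_1}) \asymp \Bigl(\int_0^{\rho(t)}\bigl(\phi_0(u)\,a^*_\mu(u)\bigr)^{p_0}\,\frac{du}{u}\Bigr)^{1/p_0} + t\Bigl(\int_{\rho(t)}^{\infty}\bigl(\phi_1(u)\,a^*_\mu(u)\bigr)^{p_1}\,\frac{du}{u}\Bigr)^{1/p_1},
\]
where $\rho(t)=f^{-1}(t)$, with the usual modification when $p_0$ or $p_1$ equals $\infty$. The upper bound is obtained from the explicit truncation $a_0=a\,\chi_{\{|a|>a^*_\mu(\rho(t))\}}$, $a_1=a-a_0$; the lower bound comes from the sub-additivity of the non-increasing rearrangement applied to an arbitrary admissible decomposition $a=a_0+a_1$, combined with the sub-multiplicativity of $\bar\phi_0$ and $\bar\phi_1$ to force the two pieces to their ``correct'' sides of the cut-off $\rho(t)$.

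Second, I would substitute this formula into the definition of $\|a\|_{K^\gamma_q(\Lambda^{\phi_0}_{p_0},\Lambda^{\phi_1}_{p_1})}$ and change variable $t=f(u)$ in the outer integration, which converts $\gamma(t)^{-1}$ into $(\gamma\circ f)(u)^{-1}$ and the Haar measure $dt/t$ into a measure comparable to $du/u$ (by Boyd-regularity of $f$). Two iterated integrals result, and the hypothesis $0<\lb(\gamma)\le\ub(\gamma)<1$ is precisely what makes the Boyd-function version of the Hardy inequality applicable to both of them: it collapses each iterated integral to its diagonal, and the two pieces combine to
\[
 \|a\|_{K^\gamma_q(\Lambda^{\phi_0}_{p_0},\Lambda^{\phi_1}_{p_1})} \asymp \Bigl\|\frac{\phi_0}{\gamma\circ f}\,a^*_\mu\Bigr\|_{L^q_*} = \|\psi\,a^*_\mu\|_{L^q_*} = \|a\|_{\Lambda^{\psi}_q(d\mu)}.
\]

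The main obstacle will be the Holmstedt-type formula in this generalized setting: the cut-off $\rho(t)$ has to balance the two Boyd weights simultaneously, and proving the sharpness of both bounds requires a careful rearrangement argument exploiting the sub-multiplicativity of $\bar\phi_0$ and $\bar\phi_1$ to control the cross terms produced by sub-optimal decompositions. Once this formula is in hand, Step~2 is a routine application of the Boyd--Hardy machinery already developed for the $n=1$ theory, and the equality of the two spaces follows.
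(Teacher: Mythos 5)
The paper does not actually prove this theorem; it is recalled from \cite{Merucci:84} (see also \cite{Persson:86}), so there is no internal proof to compare against. Your outline is, in substance, the argument of that reference: a generalized Holmstedt formula with cut-off $\rho=f^{-1}$ followed by Boyd-weighted Hardy inequalities, and your index bookkeeping is correct --- $1/(\gamma\circ f)$ has upper index $\le -\lb(\gamma)\lb(f)<0$, so Hardy applies to $\int_0^u$, while $f/(\gamma\circ f)$ has lower index $\ge(1-\ub(\gamma))\lb(f)>0$, so Hardy applies to $\int_u^\infty$; both diagonal terms reduce to $\psi\, a^*_\mu$, and the reverse embedding follows from the elementary pointwise bound $K(f(u),a)\gtrsim \phi_0(u)a^*_\mu(u)$. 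Two points should not be absorbed into ``routine'', though. First, the upper bound in your Holmstedt formula requires $\int_0^{\rho(t)}\phi_1(u)^{p_1}\,du/u\lesssim \phi_1(\rho(t))^{p_1}$ to handle the truncated piece $a_1$ on $(0,\rho(t))$, i.e.\ essentially $\lb(\phi_1)>0$ (and $\lb(\phi_0)>0$ for the spaces to be nontrivial); these are standing assumptions in \cite{Merucci:84} but are not among the hypotheses as stated here, which only constrain $f$ and $\gamma$. Second, the ``collapse to the diagonal'' applies the Hardy inequality to $\bigl(\int_0^u(\phi_0 a^*_\mu)^{p_0}\,dv/v\bigr)^{1/p_0}$, which after raising to the $p_0$-th power is a Hardy inequality in $L^{q/p_0}_*$; when $q<p_0$ (or $q<p_1$ for the other term) that exponent is below $1$ and the inequality fails for general integrands, so at that point you must exploit the monotonicity of $a^*_\mu$ (e.g.\ via a dyadic discretization), exactly as in the cited proofs. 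With these two caveats made explicit, your plan is sound and coincides with the standard argument the paper relies on.
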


Combining (\ref{Equation inter Lorentz non weighted}) and (\ref{equa Stein Weiss}), one might naturally expect a relation of the form
\begin{equation}\label{equa fausse}
 K^\gamma_q(\Lambda_{p}^{\phi_0}(\omega_0 d\mu),\Lambda_{p}^{\phi_1}(\omega_1 d\mu)) = \Lambda_q^{\psi}(\omega d\mu),
\end{equation}
for a suitably chosen $\omega$. However, this identity (\ref{equa fausse}) is generally false (see \cite{Ase:01} for a counterexample). In fact, even in the classical setting, interpolation of two Lorentz spaces typically yields a so-called block-Lorentz space \cite{Ase:01}. Strikingly, proving such a result requires the use of reiteration formulas involving three spaces.

We now show how this can be extended within the framework developed here.
\begin{Def}
Let $\phi\in\B$, $p\in [1,\infty]$ and $\omega$ be a weight on $\Omega$; we consider the family of quasi-Banach function lattices $L^\phi_p(\omega d\mu)$ defined by the quasi-norm
\[
 \|a\|_{L^\phi_p(\omega d\mu)}
  =(\int_\Omega |\phi(a(x)\omega(x))|^p d\mu(x))^{1/p},
\]
with the usual modification if $p=\infty$.
\end{Def}
We will need the following ``intermediate'' spaces.
\begin{Def}
Let $\phi\in\B$, $p,q\in [1,\infty]$ and let $\omega$ be a weight on $\Omega$; the block-Lorentz space $L^{\phi}_{p,q}(\omega d\mu)$ is defined as the space of measurable functions $a$ for which the quasi-norm
\[
 \left(\sum_{k\in \Z}(\|a\chi_{\Omega^{\omega}_k}\|_{\Lambda^p_{\phi}(\omega d
 \mu)})^q\right)^{1/q}
\]
is finite, with the usual modification if $q=\infty$. Here, the level sets $\Omega^{\omega}_k$ are defined by
\[
 \Omega^{\omega}_k= \{x\in \Omega : 2^k\le \omega (x) < 2^{k+1}\}.
\]
\end{Def}
In general, the interpolation of two spaces of the form $L^\phi_p(\omega d\mu)$ does not yield a block-Lorentz space \cite{Ase:01}. However, a more favorable situation arises when interpolating three such spaces.
\begin{Lemma}\label{lem:lor}
Let $p_0,p_1,p_2,q\in [1,\infty]$, $\phi_0,\phi_1,\phi_2\in\B$ and $\theta_1,\theta_2\in (0,1)$. Suppose that, for all $t>0$, the points $(\frac{tD\phi_j(t)}{\phi_j(t)},\frac{1}{p_j})$, $j=0,1,2$, are not collinear. Then
\[
 K^{\theta_1,\theta_2}_q (L^{\phi_0}_{p_0}(\omega d\mu),L^{\phi_1}_{p_1}(\omega d\mu),L^{\phi_2}_{p_2}(\omega d\mu))
  = L^{\psi}_{p,q}(\omega d\mu),
\]
where $1/p=\sum_{j=0}^2\theta_j/p_j$ with $\theta_0=1-\theta_1-\theta_2$, and $\psi=\phi_0^{\theta_0}\phi_1^{\theta_1}\phi_2^{\theta_2}$.
\end{Lemma}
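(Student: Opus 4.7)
\noindent\emph{Proof plan.}
The strategy is to decouple the weighted structure from the generalized Lorentz parameters. Concretely, I would use dyadic level sets of $\omega$ to reduce the weighted three-space interpolation to a family of \emph{unweighted} three-space interpolations on sets where $\omega$ is essentially constant, then reassemble with the block-Lorentz $\ell^q$-sum in $k$.

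Set $\Omega^\omega_k = \{x: 2^k \le \omega(x) < 2^{k+1}\}$, so that $\omega \asymp 2^k$ on $\Omega^\omega_k$. Then each $L^{\phi_j}_{p_j}(\omega d\mu)$ is isomorphic, with equivalent norm, to the $\ell^{p_j}$-direct sum of $L^{\phi_{j,k}}_{p_j}(\Omega^\omega_k, d\mu)$ over $k$, where $\phi_{j,k}(t) = \phi_j(2^k t)$, via the localization and summation maps $a \mapsto (a\chi_{\Omega^\omega_k})_k$ and $(a_k)_k \mapsto \sum_k a_k$. Since these maps are compatible with all three spaces simultaneously, they realize the triple $(A_0, A_1, A_2)$ as a retract of the corresponding direct-sum triple. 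By the retract stability established in Section~5, the $K^{\theta_1,\theta_2}_q$ functor can equivalently be computed on the direct-sum side, and standard identities for the $K$-functional of a direct sum let the resulting norm split as an $\ell^q$-sum over $k$ of three-space $K$-functional norms computed on each piece $\Omega^\omega_k$.

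On each level set the problem reduces to computing $K^{\theta_1,\theta_2}_q$ of three unweighted generalized Lorentz spaces. I would then perform a two-dimensional change of variables in the defining integral, passing from $(t_1, t_2)$ to coordinates that linearize the generalized Lorentz structure of the three spaces, in the spirit of the two-space computation in Theorem~\ref{interpolation de Lorentz gen est un Lorentz gen}. The Jacobian of this transformation is, up to positive factors, the determinant of a $2\times 2$ matrix whose rows encode the vectors $(tD\phi_j(t)/\phi_j(t), 1/p_j)$ for $j=1,2$ relative to $j=0$. The non-collinearity hypothesis is exactly what guarantees that this Jacobian is non-vanishing, so that the change of variables is a diffeomorphism. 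Carrying the computation through identifies the pointwise norm with a generalized Lorentz norm $\Lambda^p_{\psi}$ on $\Omega^\omega_k$, suitably rescaled in $k$, with $\psi = \phi_0^{\theta_0}\phi_1^{\theta_1}\phi_2^{\theta_2}$ and $1/p = \sum_j \theta_j/p_j$.

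Reassembling the pieces via the $\ell^q$-sum over $k$ and tracking the $2^k$-scalings arising both in $\phi_{j,k}$ and in $\psi$ then yields exactly the block-Lorentz norm $L^{\psi}_{p,q}(\omega d\mu)$. The main obstacle is the pointwise computation in the previous paragraph: verifying the Jacobian identification carefully and checking that the non-collinearity hypothesis is precisely what matches its non-vanishing. This parallels and extends the linear-independence conditions on logarithmic derivatives appearing in Proposition~\ref{pro:gen Rn} and Theorem~\ref{thm pour fig1}, where the analogous condition is again what allows the interpolation identity to close up.
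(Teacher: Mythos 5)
The paper's own proof of Lemma~\ref{lem:lor} is a one-line citation of Lemmata~4.4 and 4.5 of \cite{Ase:01}, so your proposal is in effect an attempt to reconstruct that argument: localize to the dyadic level sets of $\omega$, compute on each block, reassemble. The architecture is reasonable, but the step you dismiss as ``standard identities for the $K$-functional of a direct sum'' is precisely where the whole difficulty of the lemma lies, and no such identity exists in the generality you need. When the exponents $p_0,p_1,p_2$ are distinct, the $K$-functional of a triple of $\ell^{p_j}$-direct sums does not decompose termwise over the blocks: already for the couple $(\ell^1,\ell^\infty)$, viewed as direct sums of one-dimensional spaces, $K(t,a)$ is equivalent to $\sum_{k\le t}a^*_k$, which is not an $\ell^q$-sum of the coordinatewise $K$-functionals $\min(1,t)\,|a_k|$. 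Proving that the interpolation norm of the triple is equivalent to an $\ell^q$-sum over $k$ of \emph{Lorentz} norms on the blocks is exactly the content of the cited lemmata; it requires constructing near-optimal three-term decompositions of $a$, and the non-collinearity of the points $\bigl(tD\phi_j(t)/\phi_j(t),1/p_j\bigr)$ is what makes those decompositions available. In your plan the hypothesis is used only to invert a change of variables on each block, which misplaces its role: without it the splitting itself fails, not merely the Jacobian computation.

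A second gap: after localization the blocks are not ``unweighted generalized Lorentz spaces''; they are spaces of type $L^{\phi_{j,k}}_{p_j}$, whose quasi-norms involve no rearrangement. The emergence of the rearrangement-invariant norm $\Lambda_p^{\psi}$ in the answer (the generalized analogue of the classical fact that interpolating $L^{p_0}$ and $L^{p_1}$ with $p_0\ne p_1$ produces $L^{p,q}$) is an output of the triple $K$-functional computation, not an input, so you cannot quote Theorem~\ref{interpolation de Lorentz gen est un Lorentz gen} for the local step. Finally, $\phi_j(2^k t)$ is not equivalent to $\phi_j(2^k)\phi_j(t)$ uniformly in $k$ for a general Boyd function (take $\phi(t)=t^{\theta}(1+|\log t|)^{\gamma}$), so ``tracking the $2^k$-scalings'' must go through the uniform submultiplicativity bounds $\phi_j(2^kt)\le\overline{\phi_j}(2^k)\phi_j(t)$ rather than an exact factorization. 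To close the argument you would need to carry out, in the Boyd-function setting, the explicit two-sided estimates for $K(t_0,t_1,t_2,a)$ of the triple that \cite{Ase:01} establish for power parameters; adapting those lemmata, as the paper does, is the substantive part of the proof, and your proposal currently assumes it rather than supplies it.
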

\begin{proof}
A direct and routine adaptation of Lemmata 4.4 and 4.5 from \cite{Ase:01}, combined with similar arguments, yields the result.
\end{proof}
We now state a result that generalizes Theorem 6.2 of \cite{Ase:01} within our framework:
\begin{Thm}
Let $\gamma=\cdot^\theta$ and let $\phi_0,\phi_1\in\B$. Define $f=\phi_0/\phi_1$, $\psi=\phi_0/(\gamma\circ f)$, and
\[
 \psi\circ \omega=\frac{(\phi_0\circ \omega_0).(\gamma\circ \phi_1\circ \omega_1)}{\gamma\circ \phi_0\circ \omega_0}.
\]
Let $p,p_0,p_1,q\in[1,\infty]$ satisfy $1/p=(1-\theta)/p_0+\theta/p_1$. If $\lb(f)>0$ or $\ub(f)<0$ and if $0<\lb(\gamma)$, $\ub(\gamma)<1$, then
\[
 K^\gamma_q(\Lambda_{p_0}^{\phi_0}(\omega_0 d\mu),\Lambda_{p_1}^{\phi_1}(\omega_1 d\mu))
\]
coincides with the block-Lorentz-type space whose quasi-norm is given by
\[
 \|a\|  =
 \left(\sum_{k\in \Z}(\|a\chi_{\Omega^{\omega_0/\omega_1}_k}\|_{\Lambda_p^{\phi}(\omega d
 \mu)})^q\right)^{1/q}.
\]
\end{Thm}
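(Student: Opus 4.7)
The strategy is to reduce the two-space, two-weight interpolation to a three-space, single-weight interpolation, where Lemma~\ref{lem:lor} directly produces a block-Lorentz-type space. This is the multi-space generalization of the approach used by Asekritova \cite{Ase:01} for the classical case. The observation that $\psi\circ\omega = (\phi_0\circ\omega_0)^{1-\theta}(\phi_1\circ\omega_1)^\theta$ is exactly the Stein-Weiss geometric mean in the generalized setting; this is the signature of a three-space interpolation output, which strongly suggests that a direct two-space argument cannot work and that an auxiliary third space must be introduced.

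Concretely, I would first exhibit a triple $\nA = (A_0,A_1,A_2)$ of $L^{\eta_j}_{r_j}(\omega d\mu)$-type spaces, with a common weight $\omega$ chosen so that $\psi\circ\omega$ matches the prescribed formula, such that both $\Lambda_{p_0}^{\phi_0}(\omega_0 d\mu)$ and $\Lambda_{p_1}^{\phi_1}(\omega_1 d\mu)$ are representable as three-space $K$-interpolation spaces of $\nA$ with pure-power parameters. Each such identification reduces to an instance of Lemma~\ref{lem:lor} in a degenerate regime, or follows from a direct rearrangement computation. The auxiliary third space plays the role of bridging the mismatch between the weights $\omega_0$ and $\omega_1$.

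Next, I would invoke Theorem~\ref{thm red to 1-fct} with $n=2$, $m=1$, and $\lambda_1=\theta$. Since each $\Lambda_{p_i}^{\phi_i}(\omega_i d\mu)$ is a three-space interpolation of the same triple $\nA$ with matching pure-power parameters, reiteration collapses
\[
K^\gamma_q\bigl(\Lambda_{p_0}^{\phi_0}(\omega_0 d\mu),\Lambda_{p_1}^{\phi_1}(\omega_1 d\mu)\bigr)
\]
into a single three-space interpolation $K^{\theta_1,\theta_2}_q(\nA)$ with pure-power parameters $\theta_1, \theta_2$ determined by $\theta$ and the interpolation exponents. A final application of Lemma~\ref{lem:lor} identifies this three-space interpolation with $L^\psi_{p,q}(\omega d\mu)$, whose defining quasi-norm coincides with the block-Lorentz-type expression in the statement, the natural blocks being precisely the level sets $\Omega_k^{\omega_0/\omega_1}$ of the ratio $\omega_0/\omega_1$ that governs the weight mismatch.

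The main obstacle is the simultaneous selection of the three auxiliary $L^{\eta_j}_{r_j}(\omega d\mu)$ spaces so that (i) each weighted Lorentz space is recovered as a three-space interpolation; (ii) the non-collinearity hypothesis of Lemma~\ref{lem:lor} holds, which will follow from the strict monotonicity of $f=\phi_0/\phi_1$ guaranteed by $\lb(f)>0$ or $\ub(f)<0$; and (iii) the blocks emerging from the final application of Lemma~\ref{lem:lor} correspond to $\omega_0/\omega_1$ rather than to some other weight ratio. Once these parameter matchings are pinned down, the remainder of the proof is a routine assembly using results already developed in the paper.
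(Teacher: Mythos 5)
Your proposal follows essentially the same route as the paper, whose proof simply extends Theorem~6.2 of Asekritova et al.\ by introducing an auxiliary third space with a common weight, applying Lemma~\ref{lem:lor} to the resulting triple, and collapsing via reiteration. Your sketch is a faithful reconstruction of that argument (and more detailed than the paper's one-line citation), with the acknowledged parameter-matching obstacles being exactly the points resolved in the cited reference.
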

\begin{proof}
This result is obtained by extending the proof of Theorem 6.2 in \cite{Ase:01}, using Lemma~\ref{lem:lor} to accommodate the generalized framework.
\end{proof}


\begin{thebibliography}{99}

\bibitem{Ase:97}
I.~Asekritova and N.~Krugljak.
\newblock On equivalence of $K$- and $J$-methods for $(n+1)$-tuples of {B}anach spaces.
\newblock {\em Stud. Math.}, 122:99--116, 1997.

\bibitem{Ase:01}
I.~Asekritova, N.~Kruglyak, L.~Maligranda, L.~Nikolova and L-E.~Persson.
\newblock Lions-{P}eetre reiteration formulas for triples and their applications.
\newblock {\em Stud. Math.}, 145:219--254, 2001.

\bibitem{Bergh:76}
J.~Bergh and J.~L{\"o}fstr{\"o}m.
\newblock {\em Interpolation {S}paces: {A}n {I}ntroduction}.
\newblock Grundlehren der mathematischen Wissenschaften. Springer-Verlag, 1976.

\bibitem{Bingham:87}
N.~H. Bingham, C.~M. Goldie, and J.~L. Teugels.
\newblock {\em Regular {V}ariation}.
\newblock Cambridge University Press, 1987.

\bibitem{Boyd:67}
D.~W. Boyd.
\newblock The {H}ilbert {T}ransform on {R}earrangement-{I}nvariant {S}paces.
\newblock {\em Canadian J. Math.}, 19, 1967.

\bibitem{Boyd:69}
D.~W. Boyd.
\newblock Indices of function spaces and their relationship to interpolation.
\newblock {\em Canadian J. Math.}, 21:1245--1254, 1969.

\bibitem{Brudnyi:81}
Yu.A.~Brudny{\v\i} and N.Ja.~Krugljak.
\newblock Real interpolation functors.
\newblock {\em Soviet. Math. Dokl.}, 23:5--8, 1981.

\bibitem{Bru:91}
Yu.A.~Brudnyi and N.Ya.~Krugljak.
\newblock {\em Interpolation {F}unctors and {I}nterpolation {S}paces}.
\newblock North-Holland Mathematical Library, 1991.

\bibitem{Cobos:88}
F.~Cobos and D.~L. Fern{\'a}ndez.
\newblock {H}ardy-{S}obolev spaces and {B}esov-spaces with a function
  parameter.
\newblock {\em Lecture Notes in Math.}, 1302:158--170, 1988.

\bibitem{Cwikel:81}
M.~Cwikel and J.~Peetre.
\newblock Abstract {$K$} and {$J$} spaces.
\newblock {\em J. Math. Pures Appl.}, 60:1--50, 1981.

\bibitem{Farkas:06b}
A.~M. Caetano and W.~Farkas.
\newblock Local growth envelopes of {B}esov spaces of generalized smoothness.
\newblock {\em Z. fur Anal. ihre Anwend.}, 25:265--298, 2006.

\bibitem{Farkas:06}
W.~Farkas and H.-G. Leopold.
\newblock Characterisations of function spaces of generalised smoothness.
\newblock {\em Ann. Mat. Pura Appl.}, 185:1--62, 2006.

\bibitem{Foi:61}
C.~Foias and J.L. Lyons.
\newblock Sur certains th{\'e}or{\`e}mes d'interpolation.
\newblock {\em Acta Sci. Math. (Szeged)}, 22:269--282, 1961.

\bibitem{Gustavsson:78}
J.~Gustavsson.
\newblock A function parameter in connection with interpolation of banach
  spaces.
\newblock {\em Math. Scand.}, 60:33--79, 1978.

\bibitem{Haroske:08}
D.~Haroske and S.~Moura.
\newblock Continuity envelopes and sharp embeddings inspaces of generalized
  smoothness.
\newblock {\em J. Funct. Anal.}, 254, 2008.

\bibitem{Holmstedt:70}
T.~Holmstedt.
\newblock Interpolation of quasi-normed spaces.
\newblock {\em Math. Scand.}, 26:177--199, 1970.

\bibitem{Janson:81}
S.~Janson.
\newblock Minimal and maximal methods of interpolation.
\newblock {\em J. Funct. Anal.}, 44:50--73, 1981.

\bibitem{Ker:66}
N.~Kerzman.
\newblock Sur certains ensembles convexes li{\'e}s {\`a} des espaces {$L_p$}.
\newblock {\em C. R. Acad. Sci. Paris}, 263:365--367, 1966.

\bibitem{Krein:82}
S.~G. Krein, Ju.~I. Petunin, and E.~M. Semenov.
\newblock {\em Interpolation of {L}inear {O}perators}, volume~54.
\newblock Amer. Math. Soc., translations of mathematical monographs edition,
  1982.

\bibitem{Kreit:13}
D.~Kreit and S.~Nicolay.
\newblock Characterizations of the elements of generalized
  {H}{\"o}lder-{Z}ygmund spaces by means of their representation.
\newblock {\em J. Approx. Theory}, 172:23--36, 2013.

\bibitem{Kreit:18}
D.~Kreit and S.~Nicolay.
\newblock Generalized pointwise {H}{\"o}lder spaces defined via admissible
  sequences.
\newblock {\em J. Funct. Spaces}, 2018.

\bibitem{Lamby:22}
T.~Lamby and S.~Nicolay.
\newblock Some remarks on the {B}oyd functions related to the admissible sequences.
\newblock {\em Z. fur Anal. ihre Anwend.}, 41:211-227, 2022.

\bibitem{Lamby:24}
T.~Lamby and S.~Nicolay.
\newblock Interpolation with a function parameter from the category point of view.
\newblock {\em J. Funct. Anal.}, 286:110249, 2024.

\bibitem{Lamby:25}
T.~Lamby and S.~Nicolay.
\newblock Continuous interpolation spaces with a function parameter : properties and applications.
\newblock {\em Rocky Mountain J. Math..}, 2025.

\bibitem{Lamby:th}
T. Lamby.
\emph{Generalized Interpolation Methods and Pointwise Regularity through Continued Fractions and Diophantine Approximations}.
\newblock Ph.D. thesis, University of Li{\`e}ge, 2025.

\bibitem{Loosveldt:18}
L.~Loosveldt and S.~Nicolay.
\newblock Some equivalent definitions of {B}esov spaces of generalized
  smoothness.
\newblock {\em Math. Nach.}, 292:2262--2282, 2019.

\bibitem{Lorentz:50}
G.~Lorentz.
\newblock Some new function spaces
\newblock {\em Ann. Math.}, 51:37--55, 1950.

\bibitem{Maligranda:86}
L.~Maligranda.
\newblock Interpolation between sum and intersection of {B}anach spaces.
\newblock {\em J. Approx. Theory}, 47:42--53, 1986.

\bibitem{Marcinkiewicz:39}
J.~Marcinkiewicz.
\newblock Sur l'interpolation d'op{\'e}rateurs.
\newblock {\em C. R. Ac. Sci. Paris}, 208:1272--1273, 1939.

\bibitem{Matu:60}
W.~Matuszewska and W.~Orlicz
\newblock On certain properties of $\phi$-functions.
\newblock {\em Acad. Polon. Sci. S´er. Sci. Math. Astronom. Phys.}, 8:439--443, 1960.

\bibitem{Merucci:84}
C.~Merucci.
\newblock Applications of interpolation with a function parameter to {L}orentz, {S}obolev and {B}esov spaces.
\newblock In M.~Cwikel and J.~Peetre, editors, {\em Interpolation Spaces and Allied Topics in Analysis}, pages 183--201. Springer, 1984.

\bibitem{Ovchinnikov:84}
V.I.~Ovchinnikov.
\newblock {\em The Method of Orbits in Interpolation Theory}.
\newblock Harwood Academic Publishers, 1984.

\bibitem{Peetre:68}
J.~Peetre.
\newblock A theory of interpolation of normed spaces.
\newblock {\em Notas Math. Brasilia}, 39:1--86, 1968.

\bibitem{Persson:86}
L.E.~Persson.
\newblock Interpolation with a parameter function.
\newblock {\em Math. Scand.}, 59:199--222, 1986.

\bibitem{Riesz:27}
M.~Riesz.
\newblock Sur les maxima des formes bilin{\'e}aires et sur les fonctionnelles lin{\'e}aires.
\newblock {\em Acta Math.}, 49:465--497, 1927.

\bibitem{Spa:74}
G.~Sparr.
\newblock Interpolation of {S}everal {B}anach {S}paces.
\newblock {\em Ann. di Mat. Pura ed Appl.}, 99:247-316, 1974.

\bibitem{Thorin:48}
G.O.~Thorin.
\newblock Convexity theorems generalizing those of {M}. {R}iesz and {H}adamard with some applications.
\newblock {\em Comm. Sem. Math. Univ. Lund}, 9:1--58, 1948.

\bibitem{Triebel:78}
H.~Triebel.
\newblock {\em Interpolation theory, function spaces, differential operators}.
\newblock North-Holland Mathematical Library, 1978.

\bibitem{Yos:70}
A.~Yoshikawa.
\newblock Sur la th{\'e}orie d'espaces d'interpolation - les espaces de moyenne de plusieurs espaces de {B}anach.
\newblock {\em J. Fac. Sci. Univ. Tokyo}, 16:407--468, 1970.

\end{thebibliography}
\end{document}